\pdfoutput=1

\documentclass[preprint,10pt]{elsarticle}
\usepackage{fullpage}
\usepackage{hyperref}

\usepackage{amsmath,amssymb,amsfonts,mathrsfs,amsthm}
\usepackage[titletoc,toc,title]{appendix}

\usepackage{array}
\usepackage[utf8]{inputenc}
\usepackage{listings}
\usepackage{mathtools}
\usepackage{pdfpages}
\usepackage[textsize=footnotesize,color=green]{todonotes}
\usepackage{bm}
\usepackage[normalem]{ulem}
\usepackage{hhline}

\usepackage{algorithm}
\usepackage[noend]{algpseudocode}
\usepackage{algorithmicx}

\usepackage{graphicx}
\usepackage{subfig}

\usepackage{color}

\newcommand{\vect}[1]{\ensuremath\boldsymbol{#1}}

\newcommand{\td}[2]{\frac{{\rm d}#1}{{\rm d}#2}}
\newcommand{\pd}[2]{\frac{\partial#1}{\partial#2}}

\newcommand{\nor}[1]{\left\| #1 \right\|}

\newcommand{\LRp}[1]{\left( #1 \right)}
\newcommand{\LRs}[1]{\left[ #1 \right]}

\newcommand{\LRb}[1]{\left| #1 \right|}
\newcommand{\LRc}[1]{\left\{ #1 \right\}}

\newcommand{\Grad} {\ensuremath{\nabla}}
\newcommand{\Div} {\ensuremath{\nabla\cdot}}

\newcommand{\jump}[1] {\ensuremath{\LRs{\![#1]\!}}}
\newcommand{\avg}[1] {\ensuremath{\LRc{\!\{#1\}\!}}}

\newcommand{\Gh}{\Gamma_h}

\newcommand{\Oh}{\Omega_h}

\newtheorem{theorem}{Theorem}[section]
\newtheorem{lemma}[theorem]{Lemma}

\newcommand{\eval}[2][\right]{\relax
  \ifx#1\right\relax \left.\fi#2#1\rvert}

\newcommand{\refhex}{\widehat{\mathcal{H}}}

\newcommand{\refwedg}{\widehat{\mathcal{W}}}

\newcommand{\refpyr}{\widehat{\mathcal{P}}}

\newcommand{\hex}{{\mathcal{H}}}
\newcommand{\tet}{{\mathcal{T}}}
\newcommand{\wedg}{{\mathcal{W}}}

\newcommand{\pyr}{{\mathcal{P}}}

\newcolumntype{C}[1]{>{\centering\let\newline\\\arraybackslash\hspace{0pt}}m{#1}}

\newcommand*\diff[1]{\mathop{}\!{\mathrm{d}#1}}

\makeatletter
\renewcommand\d[1]{\mspace{6mu}\mathrm{d}#1\@ifnextchar\d{\mspace{-3mu}}{}}
\makeatother

\journal{Computer Methods in Applied Mechanics and Engineering (CMAME)}


\begin{document}

\begin{frontmatter}

\author[rice]{Jesse Chan\corref{cor1}}
\ead{Jesse.Chan@caam.rice.edu}
\cortext[cor1]{Principal Corresponding author}
\author[rice]{Zheng Wang}
\ead{zw14@caam.rice.edu}
\author[rice]{Axel Modave}
\ead{modave@caam.rice.edu}
\author[ucl]{Jean-Francois Remacle}
\ead{Jean-Francois.Remacle@uclouvain.be}
\author[rice]{T. Warburton}
\ead{timwar@caam.rice.edu}
\address[rice]{Department of Computational and Applied Mathematics, Rice University, 6100 Main St, Houston, TX, 77005}
\address[ucl]{School of Engineering, Universite catholique de Louvain, Avenue Georges Lemaitre 4,  B-1348 Louvain-la-Neuve, Belgium}

\title{GPU-accelerated discontinuous Galerkin methods on hybrid meshes}

\begin{abstract}
We present a time-explicit discontinuous Galerkin (DG) solver for the time-domain acoustic wave equation on hybrid meshes containing vertex-mapped hexahedral, wedge, pyramidal and tetrahedral elements.  Discretely energy-stable formulations are presented for both Gauss-Legendre and Gauss-Legendre-Lobatto (Spectral Element) nodal bases for the hexahedron.  Stable timestep restrictions for hybrid meshes are derived by bounding the spectral radius of the DG operator using order-dependent constants in trace and Markov inequalities.  Computational efficiency is achieved under a combination of element-specific kernels (including new quadrature-free operators for the pyramid), multi-rate timestepping, and acceleration using Graphics Processing Units.  
\end{abstract}

\end{frontmatter}


\section{Introduction}

Highly efficient solution techniques exist for high order finite element and Galerkin discretizations on hexahedral meshes.  Presently, producing high quality hexahedral meshes for complex domains is a difficult and non-robust procedure.  Hybrid meshes, which consist of wedge and pyramidal elements in addition to hexahedra and tetrahedra, have been proposed to leverage the efficiency of hexahedral elements for more general geometries.  Such meshes are commonly produced by adding a transitional layer of wedges and pyramids to a structured hexahedral mesh and constructing an unstructured tetrahedral mesh to fill in the remaining space \cite{schoberl2012netgen}.  Recently, techniques have been developed to automatically generate conforming hex-dominant hybrid meshes with a high percentage of hexahedral elements \cite{baudouin2014frontal}, which we aim to leverage to develop efficient solvers on more general geometries.  We note that the utility of high order time-explicit DG using hybrid (including polygonal) meshes has been explored for polynomial physical frame discretizations in \cite{gassner2009polymorphic}.  We consider in this work approximation spaces defined under a reference-to-physical mapping.  

Spectral and $hp$-finite element methods on hybrid meshes were explored early on by Sherwin, Karniadakis, Warburton, and Kirby \cite{sherwin1998spectral, warburton1999spectral, kirby2000discontinuous}  using orthogonal polynomial basis functions constructed for hexahedral, wedge, pyramidal, and tetrahedral elements.  While approximation spaces for the hexahedron, wedge, and tetrahedron have remained relatively unchanged since their introduction, more recent work has focused on the construction of alternative basis functions and approximation spaces for the pyramid.  
Bedrosian introduced a set of low-order vertex functions for the pyramid which yielded polynomial traces, allowing for conformity with low order finite elements for other shapes \cite{bedrosian1992shape}.  Unlike other elements, however, his pyramidal shape functions were rational in nature.  Bergot, Cohen, and Durufle extended this construction in \cite{bergot2010higher, bergot2013higher} to produce a high order basis for the pyramid (which we refer to as the \emph{rational basis}).  It was additionally shown that, under non-affine mappings of the reference pyramid, the mapped rational basis contains a complete polynomial space.  As a result, the rational basis provides optimal high order convergence rates on vertex-mapped elements, which is not true of the polynomial pyramid basis.  
Rational pyramidal bases were also adapted to the $H({\rm curl}),H({\rm div})$ setting and  exact sequence spaces in \cite{bergot2013high, bergot2013approximation, nigam2012numerical}.  The construction of exact sequence high order bases for elements of all types (including pyramids) is addressed in recent work by Fuentes, Keith, Demkowicz, and Nagaraj \cite{fuentes2015orientation}.  

Our focus is on high order discontinuous Galerkin (DG) methods on hybrid meshes.  We pay particular attention to the efficient implementation of solvers on hardware accelerators.  The computation-intensive nature of time-explicit methods lends itself well to modern accelerator architectures such as Graphics Processing Units (GPUs), and was exploited by Kl{\"o}ckner, Warburton, Bridge and Hesthaven to construct an efficient high order DG solver on tetrahedral meshes using a single GPU \cite{klockner2009nodal}.  Exposing both coarse and fine-grained parallelism resulted in several times speedup for meshes with around a hundred thousand elements at high orders of approximation.  Increases in on-board memory in modern GPUs have allowed for the solution of even larger problems on a single unit.  Additional gains in efficiency and problem size may be achieved via multi-rate timestepping and distributed-memory parallelism \cite{godel2010gpu, godel2010scalability}.  However, these technologies have been developed on all-hex or all-tetrahedra meshes, and while hetergeneous computing on hybrid meshes has been explored in the context of fluid dynamics and the closely related Flux Reconstruction method \cite{witherden2014pyfr}, GPU-accelerated DG methods on hybrid meshes have not yet been analyzed in detail.  

Crucial to the efficiency of GPU-accelerated DG methods is limiting the memory required.  Since each GPU has a relatively small amount of on-device storage, maximum problem sizes are typically memory-bound.  Classical techniques for hexahedral and tetrahedral elements limit the amount of data that must be stored, and recent developments in the construction of basis functions \cite{warburton2010low, warburton2013low, chan2015orthogonal} make it possible to develop low-storage DG methods on pyramids and wedges as well.  This work leverages each of these technologies to construct a low-memory GPU-accelerated high order DG solver on hybrid meshes for the acoustic wave equation.  Additionally, order-dependent global and local timestep restrictions for general hybrid meshes are derived using constants in discrete trace inequalities for each element.  

The structure of this paper is as follows: Section~\ref{sec:method} introduces a low-storage DG method for hybrid meshes.  The low-storage treatment of mass matrices is addressed through a careful choice of basis functions for each specific type of element.  The variational formulation and element-specific operations are given, and a computational implementation on many-core architectures is described.  Section~\ref{sec:timestep} derives order-dependent timestep restrictions for each type of element based on the constants in discrete trace inequalities.  Finally, Section~\ref{sec:numerics} reports numerical and computational results.  

\section{A Low-storage DG method on hybrid meshes}
\label{sec:method}

Typical DG methods result in a system of ordinary differential equations of the form
\[
\td{u}{\tau} = M^{-1}Au,
\]
where $\tau$ is time, $A$ is a linear operator defined by the discretization, and $M$ is the global mass matrix, which is block diagonal in nature.  Efficient implementations of DG must account for the inverse of $M$ in a fast and memory-efficient manner.  One common strategy is to store the factorization or explicit inverse of each block of $M^{-1}$; however, on GPU and other accelerator architectures, on-device memory is typically limited to a few gigabytes.  Explicit storage of factorizations can rapidly exhaust available memory and limit the maximum problem size on a single GPU at high orders of approximation.  For example, assuming a mesh of $1$ million planar tetrahedra in single precision, at order $N=4$, storage of $M^{-1}$ already exceeds the 1 GB of memory available on early GPUs used to accelerate DG methods \cite{klockner2009nodal}, without accounting for the storage cost of other relevant data.  


We consider meshes consisting of hexahedra, wedges, pyramids, and tetrahedra.  Given an order $N=1$ basis on each element, we may then define low-order vertex functions.  A map from the reference element to a physical element may then be defined by interpolating physical vertex positions with these vertex functions.  We refer to such elements as \emph{vertex-mapped} elements, and restrict ourselves to meshes consisting of such elements in this work.   Furthermore, we define $J$ to be the the determinant of the mapping Jacobian, such that
\[
\int_{K} u = \int_{\widehat{K}}u J,
\]
where $K,\widehat{K}$ are the physical and reference elements, respectively.  

\begin{figure}
\centering
\includegraphics[width=.23\textwidth]{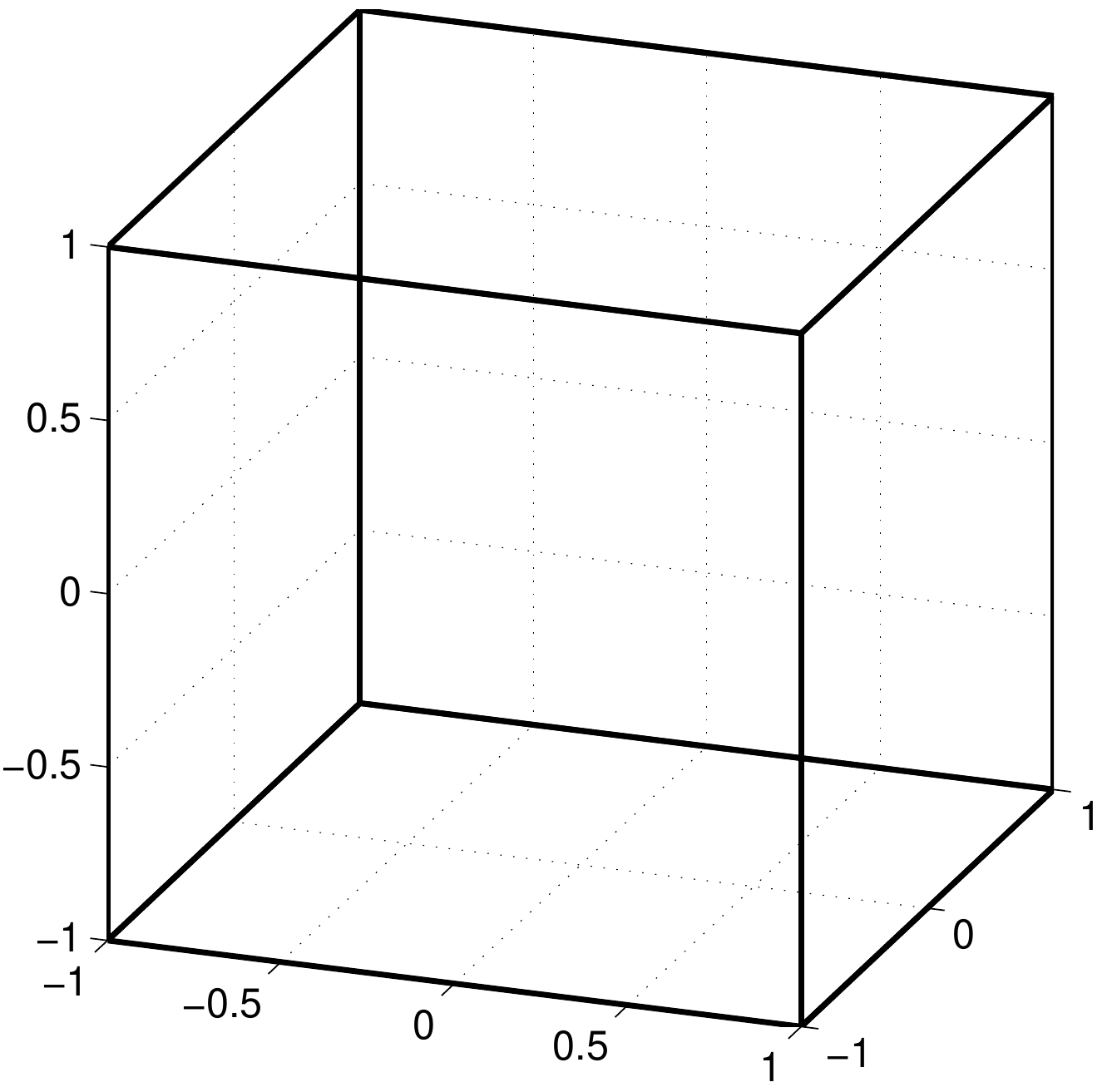}
\includegraphics[width=.23\textwidth]{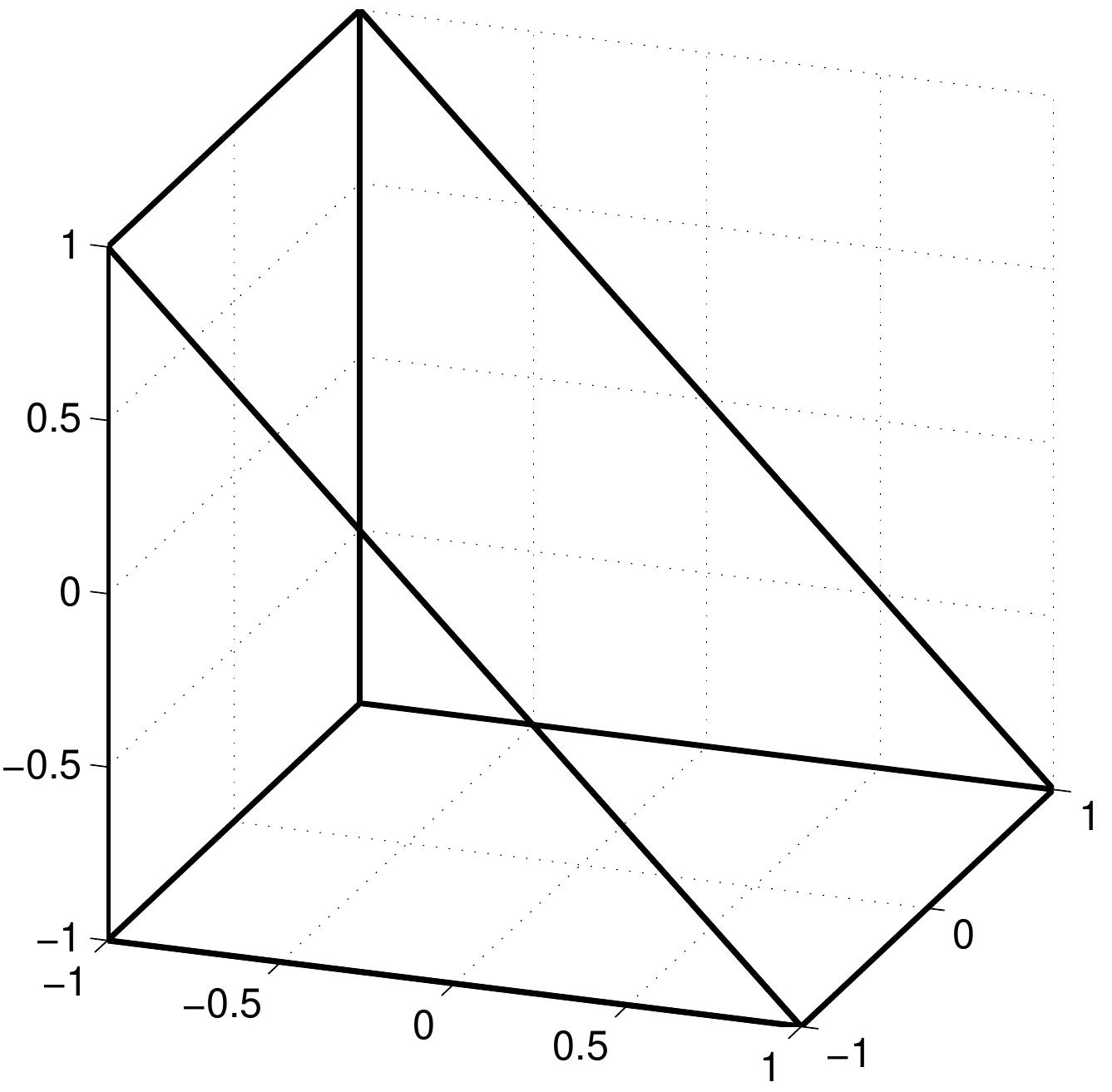}
\includegraphics[width=.23\textwidth]{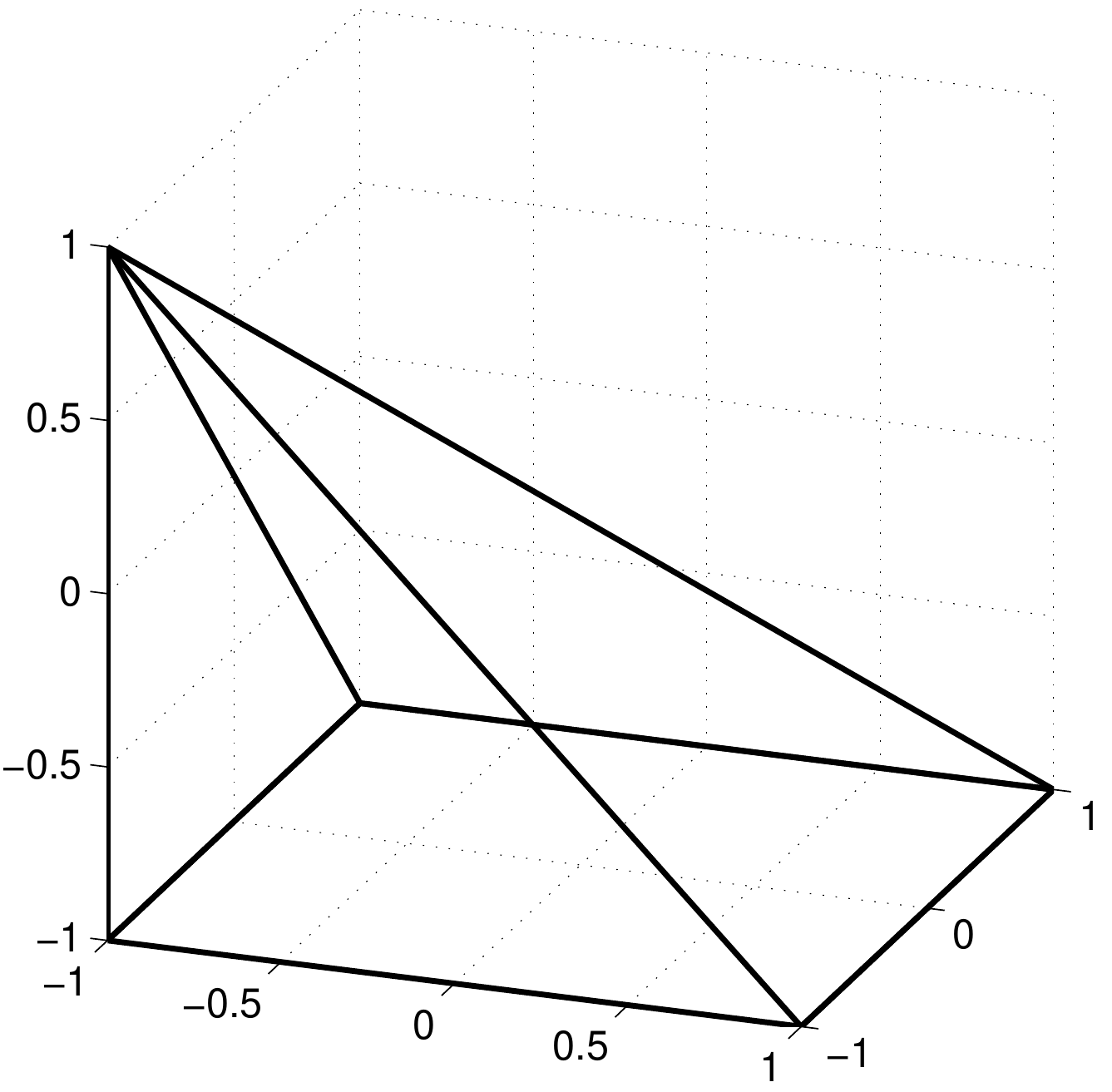}
\includegraphics[width=.23\textwidth]{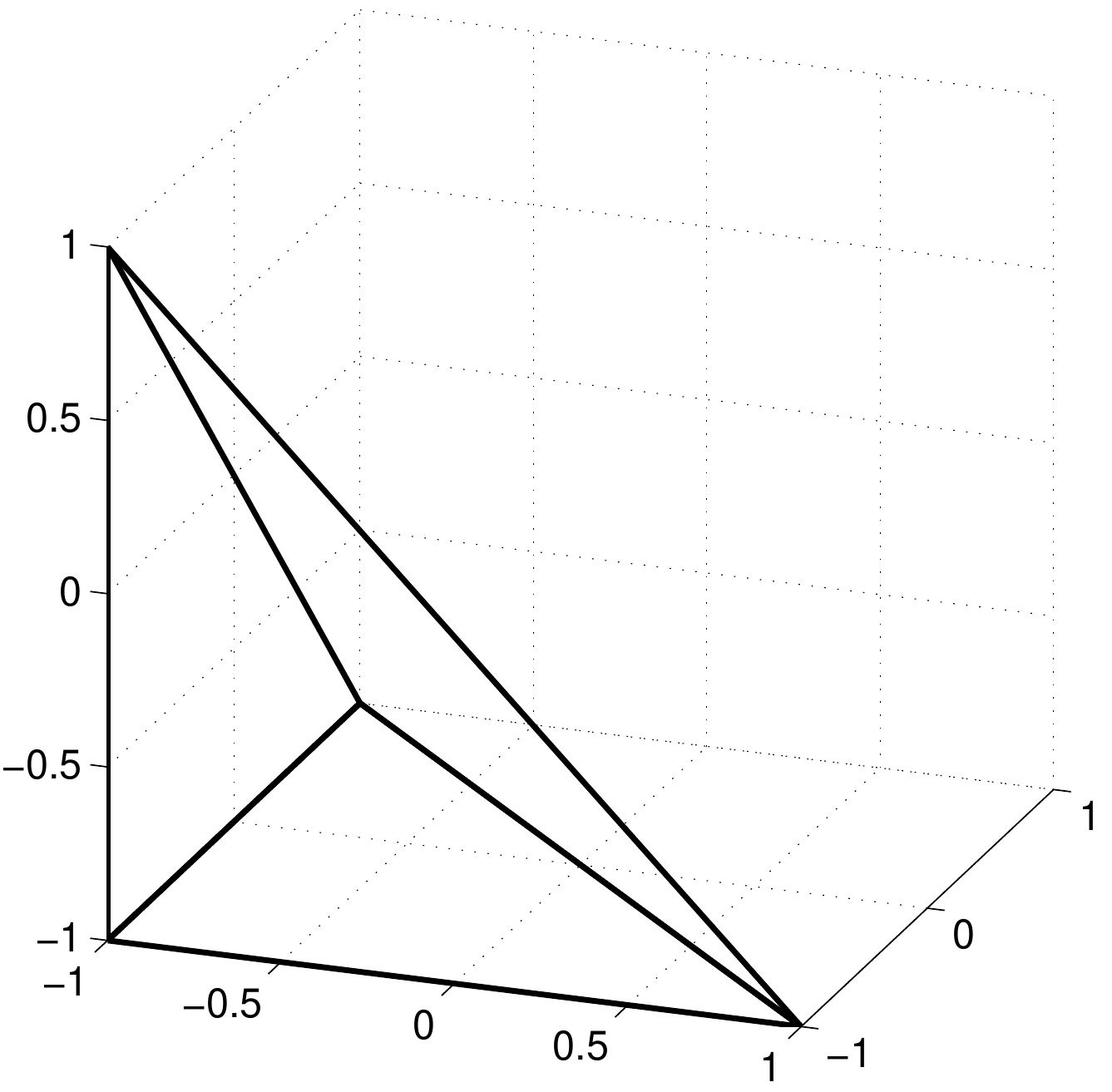}
\caption{Types of elements present in hex-dominant meshes.  From left to right: reference hexahedra, wedge, pyramid, and tetrahedra.}
\label{fig:hybrid}
\end{figure}

\subsection{Basis functions} 
\label{sec:bases}

We will sidestep the cost of storage of $M^{-1}$ by tailoring our choice of basis functions for each element.  Assuming that all elements are vertex-mapped, we will construct basis functions for the hexahedron, wedge, and pyramid which yield diagonal mass matrices.  For vertex-mapped tetrahedra, the mass matrix will not be diagonal, but will be a scalar multiple of the reference mass matrix.  

\subsubsection{Hexahedra}

We take the reference hexahedron to be the bi-unit cube, with coordinates $(a,b,c )\in [-1,1]^3$.  
The approximation space of order $N$ on the hexahedron is simply the tensor product space of polynomials of order $N$ in each direction.  For a vertex-mapped hexahedra $\hex$, this space and its dimension $N_p$ are given as follows
\[
P_N(\hex) = \LRc{x^iy^jz^k, \quad 0\leq i,j,k\leq N}, \quad N_p = (N+1)^3.
\]
We also have that $J \in P_1(\refhex)$, and an appropriate quadrature rule can be constructed using a tensor product of standard 1D Gauss-Legendre rules.  If we take the orthogonal basis
\[
\phi_{ijk}(a,b,c) = \ell_i(a) \ell_j(b)\ell_k(c) \quad 0 \leq i,j,k\leq N,
\]
where $\ell_i$ is the $i$th Lagrange polynomial at the $i$th Gauss-Legendre node, the mass matrix defined by
\[
M_{ijk,lmn} = \int_{\hex} \phi_{ijk} \phi_{lmn} \diff x = \int_{\refhex}  \phi_{ijk}\phi_{lmn} J \diff {\widehat{x}}
\]
is diagonal for $J \in P_1(\refhex)$.  To apply $M^{-1}$, we store only of the inverse of the diagonal of the full mass matrix, which requires the storage of $N_p K$ values, instead of storing the $O(N_p^2 K)$ entries required for the factorization of the global block-diagonal mass matrix.  

\begin{figure}
\centering
\subfloat[SEM nodes]{\includegraphics[width=.2\textwidth]{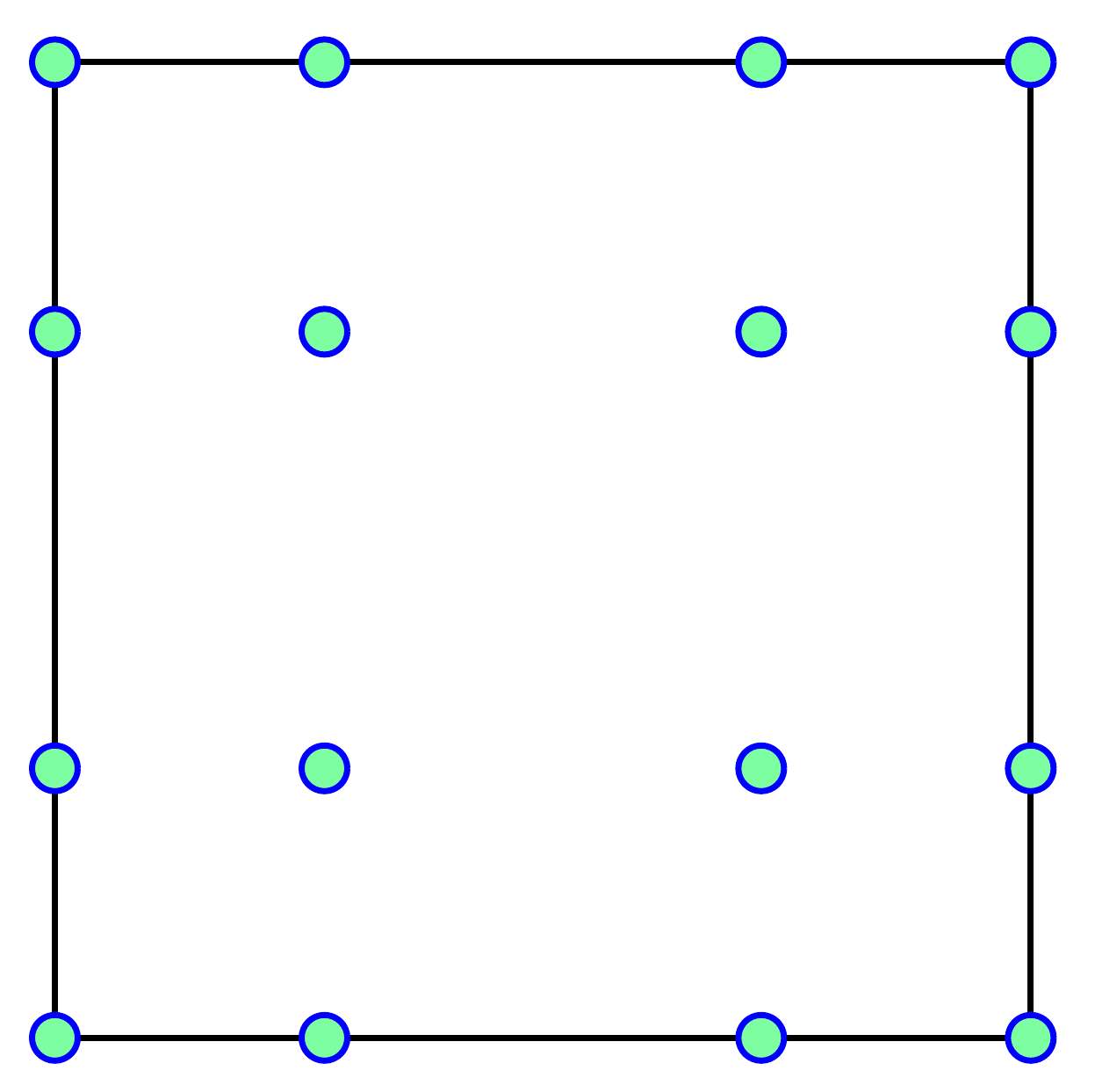}}
\hspace{5em}
\subfloat[GL nodes]{\includegraphics[width=.2\textwidth]{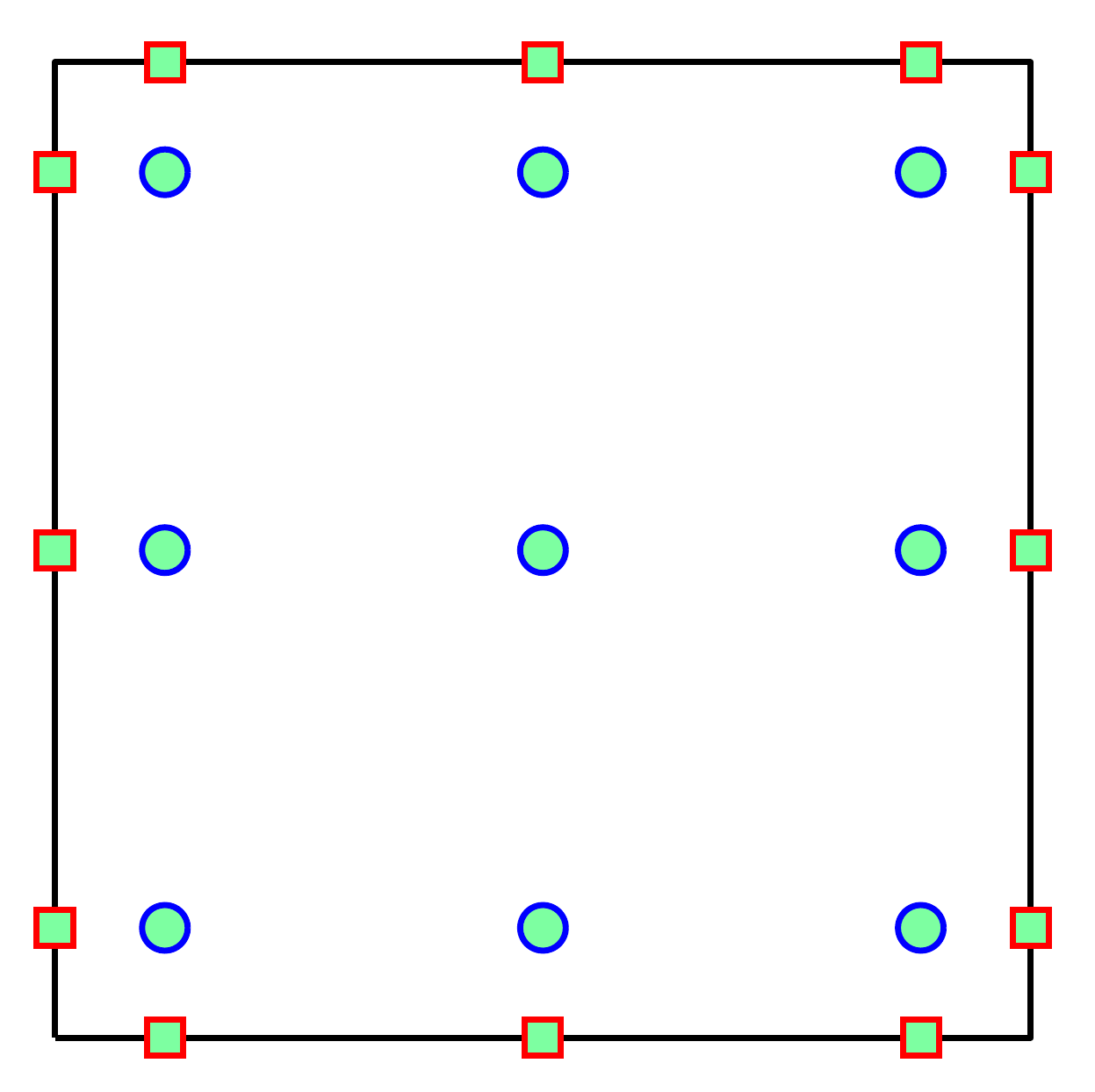}}
\caption{SEM and GL nodes on the quadrilateral.  For GL nodes, additional surface nodes must be specified (shown as squares).}
\end{figure}

We may also achieve a diagonal mass matrix by taking Lagrange polynomials at tensor product Gauss-Legendre-Lobatto nodes, which includes points at both $-1$ and $1$.  Applying Gauss-Legendre-Lobatto quadrature results in a diagonal mass matrix, and is the basis behind the Spectral Element Method \cite{patera1984spectral, karniadakis2013spectral}.  We refer to the choice of basis using Gauss-Legendre nodes as the GL basis, and the basis using Gauss-Legendre-Lobatto nodes as the SEM basis.  We will compare both of these choices in this work.  

\subsubsection{Tetrahedra}
For the reference tetrahedron, an orthogonal basis may be given as the product of Jacobi polynomials on the bi-unit cube \cite{proriol1957famille, koornwinder1975two, dubiner1991spectral}
\[
\phi_{ijk}(a,b,c) = P_i^{0,0}(a)(1-b)^i P_j^{2i+1,0}(b) (1-c)^{i+j} P_k^{2(i+j +1),0}(c).
\]
The tetrahedral basis is then given by the Duffy mapping from the bi-unit cube to the bi-unit right tetrahedron with coordinates $(r,s,t)$
\begin{align*}
r &= (1+a)\LRp{\frac{1-b}{2}}\LRp{\frac{1-c}{2}}-1, \quad s = (1+b)\LRp{\frac{1-c}{2}}-1, \quad t = c.
\end{align*}
The approximation space for a vertex-mapped tetrahedra is the space of polynomials of total order $N$ with dimension $N_p$
\[
P(\tet) = \LRc{x^iy^jz^k, \quad i + j + k \leq N}, \quad N_p = \frac{(N+1)(N+2)(N+3)}{6}.
\]
A convenient fact about vertex-mapped (planar) tetrahedra is that $J$ is constant on each element.  Thus, each local mass matrix is simply a constant scaling of the reference mass matrix, and only the reference mass matrix is stored.  In our implementation, we choose a nodal Lagrange basis (defined implicitly using the above orthogonal basis) where the nodes are placed at optimized interpolation points on the simplex \cite{warburton2006explicit}.  

\subsubsection{Wedges}

An orthogonal basis for the reference wedge may be given in terms of Jacobi polynomials on the bi-unit cube
\[
\phi_{ijk}(a,b,c) = P_i^{0,0}(a) P_j^{0,0}(b) \LRp{\frac{1-c}{2}}^i P_k^{2i+1,0}(c), \qquad 0\leq i,j \leq N, \quad 0\leq k \leq N-i.
\]
Applying a Duffy-type transform gives a basis on the reference bi-unit wedge with coordinates $(r,s,t)$
\[
r = (1+a)\LRp{\frac{1-c}{2}} - 1, \quad s = b, \quad t = c.
\]
For a vertex-mapped wedge $\wedg$,  $J \in P_1(\refwedg)$, and the approximation space of order $N$ on the wedge and its dimension are 
\[
P_N(\wedg) = \LRc{x^iy^jz^k, \quad 0\leq i,j\leq N, \quad 0\leq  k \leq N-i},\quad N_p = \frac{(N+1)^2(N+2)}{2}.
\]
We may construct an appropriate quadrature rule on the reference wedge based on the tensor product of Gauss-Legendre rules in the $r,s$ coordinate, and a Gauss-Legendre rule with weights $(1,0)$ in the $t$ coordinate.  Alternatively, a wedge cubature may be constructed by taking the tensor product of an optimized cubature on the triangle with Gauss-Legendre cubature in the $s$ coordinate.  We take the latter approach in this work, using rules from Xiao and Gimbutas \cite{xiao2010quadrature}.  The triangle quadrature is chosen such that the rule is exact for polynomials of total order $2N+1$, similar to Gauss-Legendre quadrature.  

Decreasing storage costs for the wedge mass matrix is less straightforward.  One common approach is to construct a basis using the tensor product of triangular basis functions with Lagrange polynomials at $N+1$ Gauss-Legendre points.  The resulting mapped basis is orthogonal in the $b$ direction, and the local mass matrix becomes block diagonal, with each block corresponding to a triangular slice of the wedge.  Unfortunately, for general vertex-mapped wedges, $J$ is bilinear on each triangular slice, and each block of the local mass matrix corresponds to a $J$-weighted $L^2$ inner product on a triangle.  Since there is clear basis which is orthogonal in this inner product for an arbitrary bilinear $J$, the blocks of each local mass matrix are typically dense and distinct from element to element.  Factorizations of each block are required in order to efficiently apply the mass matrix inverse, resulting in increased storage costs.  

We may address this storage cost by using a Low-Storage Curvilinear DG (LSC-DG) approach \cite{warburton2010low, warburton2013low}, where we define the $H^1$ non-conforming basis $\tilde{\phi}_{ijk}$
\[
\tilde{\phi}_{ijk} = \frac{\phi_{ijk}}{\sqrt{J}},
\]
such that 
\[
M_{ijk,lmn} = \int_{\refwedg}  \tilde{\phi}_{ijk}\tilde{\phi}_{lmn} J \diff {\widehat{x}}  \int_{\refwedg}  \frac{\phi_{ijk}}{\sqrt{J}}\frac{\phi_{lmn}}{\sqrt{J}} J \diff {\widehat{x}} = \widehat{M}_{ijk,lmn}.
\]
As a result, each mass matrix is identical to the reference mass matrix, allowing us to apply $M^{-1}$ while accounting for only a single mass matrix inverse over all elements.  

The use of LSC-DG results in a rational basis; consequentially, standard quadratures are not exact and aliasing errors may result from underintegration.  To sidestep these issues, we restrict ourselves to variational formulations that may be written in a skew-symmetric fashion, such that DG is energy stable irregardless of the choice of quadrature \cite{warburton2013low}.  Additionally, since our approximation space now contains rational functions, standard polynomial approximation results do not hold for LSC-DG wedges.  Under a quasi-regular scaling assumption on elements in our mesh, we recover instead an $L^2$ approximation bound of the form
\[
\nor{u-\Pi_w u}_{L^2(K)} \leq C h^{N+1}\nor{\frac{1}{\sqrt{J}}}_{L^{\infty}(K)}\nor{\sqrt{J}}_{W^{N+1,\infty}(K)}\nor{u}_{W^{N+1,2}(K)},
\]
where the weighted projection $\Pi_w u$ is the projection onto the LSC-DG space, $h$ is the size of the element $K$, and $\nor{\cdot}_{W^{N+1,\infty}(K)}$ denotes the $L^{\infty}$ Sobolev norm of order $N+1$ over $K$.  For comparison, the projection error bound for curvilinear mappings using standard mapped bases replaces the Soblev in the above bound with an $L^{\infty}$ norm.  In other words, while the approximation error for standard curvilinear elements depends only on the extremal values of $J$, it depends additionally on the smoothness of $J$ when using rational LSC-DG basis functions.  

\subsubsection{Pyramids}


Unlike hexahedral, wedge, and tetrahedral elements, the basis functions for the pyramid are rational in nature.  For a vertex-mapped pyramid $\pyr$, the approximation space and dimension $N_p$ are 
\[
B_N(\pyr) = P^N(\pyr) \oplus \sum\limits_{k=0}^{N-1}\LRc{\LRp{\frac{xy}{1-z}}^{N-k}x^i y^j, 0\leq i + j \leq k}, \quad N_p = \frac{(N+1)(N+2)(2N+3)}{6}, 
\]
where $P_N(\pyr)$ is the space of polynomials of total order $N$ on the physical pyramid.
For a vertex-mapped pyramid, we have that the change of variables factor $J \in B_1(\refpyr)$.  An appropriate quadrature rule is defined on the reference pyramid using a tensor product of Gauss-Legendre rules in the $r,s$ coordinate, and a Gauss-Legendre rule with weights $(2,0)$ in the $t$ coordinate.  

Since $J$ is again non-constant, each local mass matrix is distinct and dense, increasing storage costs for DG methods on pyramids.  However, it was shown in \cite{chan2015orthogonal} that there exists an orthogonal basis $\phi_{ijk}$ on vertex-mapped pyramids which spans $B_N(\pyr)$.  This basis is defined on the bi-unit cube as
\[
\phi_{ijk}(a,b,c) = \sqrt{\frac{C^N_{N-k}}{{w_iw_j}}} {\ell_i^{k}(a)\ell_j^{k}(b)}\LRp{\frac{1-c}{2}}^{k} P^{2k+3}_{N-k}(c), \quad 0\leq i,j \leq k, \quad 0\leq k \leq N,
\]
where $\ell_i^{k}(a), \ell^k_j(b)$ are order $k$ Lagrange polynomials at $(k+1)$ Gauss-Legendre nodes, and $P^{2k+3}_{N-k}(c)$ is the Jacobi polynomial of degree $k$ with order-dependent weight $2k+3$.  $w_i$ is the $i$th Gauss-Legendre quadrature weight, and the normalization constant $C^N_{N-k} = (N+2)/(2^{2k+2}(2k+3))$. 
The pyramid basis is then defined under the Duffy-type mapping from the bi-unit cube to the bi-unit right pyramid 
\begin{align*}
r &= (1+a)\LRp{\frac{1-c}{2}}-1, \quad s = (1+b)\LRp{\frac{1-c}{2}}-1,\quad t= c.
\end{align*}
 The mass matrix is diagonal for vertex-mapped pyramids under such a basis, and the entries of the mass matrix are the evaluation of $J$ at the quadrature points $a^{k}_i, b^{k}_j$.  As with hexahedral elements, the application of $M^{-1}$ requires only storage of the diagonals of each local mass matrix.  

\subsection{Variational formulation}
\label{sec:form}
We consider the acoustic wave equation on domain $\Omega$ with a free surface boundary condition $p=0$ on the boundary of the domain $\partial \Omega$.  This may be written in first order form
\begin{align*}
\frac{1}{\kappa}\pd{p}{\tau}{} + \Div u &= f\\
\rho\pd{\bm{u}}{\tau}{} + \Grad p &= 0,
\end{align*}
where $p$ is acoustic pressure, $\bm{u}$ is velocity, and $\rho$ and $\kappa$ are density and bulk modulus, respectively.  We assume also a triangulation of the domain $\Omega_h$ consisting of elements $K$ with faces $f$, and that $\rho$ and $\kappa$ are piecewise constant on each element.  We will denote the union of the faces $f$ as $\Gamma_h$.  Let $(p^-,\bm{u}^-)$ denote the solution fields on the face of an element $K$, and let $(p^+,\bm{u}^+)$ denote the solution on the neighboring element adjacent to that face.  We may then define the jump of $p$ and the normal jump and average of the vector velocity $\bm{u}$ componentwise
\[
\jump{p} = p^+ - p^-, \qquad \jump{\bm{u}} = \bm{u}^+ - \bm{u}^-, \qquad \avg{\bm{u}} = \frac{\bm{u}^+ + \bm{u}^-}{2}.
\]
Defining $\vect{n}^-$ as the outward unit normal on a given face, the (local) variational formulation for the discontinuous Galerkin method is then
\begin{align}
\int_K \frac{1}{\kappa}\pd{p}{\tau}{}\phi^- \diff x &= \int_K \bm{u}\cdot \Grad\phi^- \diff x + \int_{\partial K}\LRp{ \frac{1}{2}\tau_p\jump{p} - \bm{n}^-\cdot \avg{\bm{u}} }\phi^- \diff x   \\
\int_K \rho\pd{\bm{u}}{\tau}{}\bm{\psi}^- \diff x &= - \int_K \Grad p \cdot  \bm{\psi}^- \diff x + \int_{\partial K} \frac{1}{2}\LRp{\tau_u \jump{\bm{u}}\cdot \bm{n}^- - \jump{p}}\bm{\psi}^-\cdot \bm{n}^- \diff x,
\label{eq:skewform}
\end{align}
where $\tau_p = 1/\avg{\rho c}$, $\tau_u = \avg{\rho c}$, and $c^2 = \kappa/\rho$ is the speed of sound.  Only one equation has been integrated by parts, resulting in a skew-symmetric and energy-stable formulation (the choice of which equation to integrate by parts is arbitrary; both produce skew-symmetric formulations).  Proofs of energy stability typically rely on the exact evaluation of integrals \cite{malm2013stabilization}.  Due to the use of LSC-DG on wedges, physical basis functions become rational in nature and are not exactly integrable using standard quadrature rules.  The skew-symmetry of the variational form guarantees energy stability irregardless of quadrature rule \cite{warburton2013low}.  

While we adopt a skew-symmetric variational formulation when necessary for energy stability, we do not always compute with it.  Instead, we use the fact that the skew-symmetric form (\ref{eq:skewform}) is sometimes equivalent to the twice integrated-by-parts ``strong form'' \cite{hesthaven2007nodal}
\begin{align*}
\int_K \frac{1}{\kappa}\pd{p}{\tau}{}\phi^- \diff x &= -\int_K \Div\bm{u}\phi^- \diff x + \int_{\partial K} \frac{1}{2}\LRp{\tau_p\jump{p} - \bm{n}\cdot \jump{\bm{u}} }\phi^- \diff x  \\
\int_K \rho\pd{\bm{u}}{\tau}{}\bm{\psi}^- \diff x &= - \int_K \Grad p \cdot  \bm{\psi}^- \diff x + \int_{\partial K} \frac{1}{2}\LRp{\tau_u \jump{\bm{u}}\cdot \bm{n}^- - \jump{p}}\bm{\psi}^-\cdot \bm{n}^- \diff x.
\end{align*}
This holds, for example, if integration by parts holds when volume and surface integrals are replaced with quadrature approximations, and is related to the ``summation-by-parts'' property in finite differences.   


The choice between the strong/skew formulations is governed by the choice between GL and SEM nodal bases for the hexahedron, which determines the quadrature rule on quadrilateral faces (see Figure~\ref{fig:surface_nodes}).  When SEM quadrature is used, surface integrals over the quadrilateral face are computed inexactly; as a result, the strong formulation may not be equivalent to the weak formulation, and may not be energy stable.  

\begin{figure}
\centering
\subfloat[SEM]{\includegraphics[width=.32\textwidth]{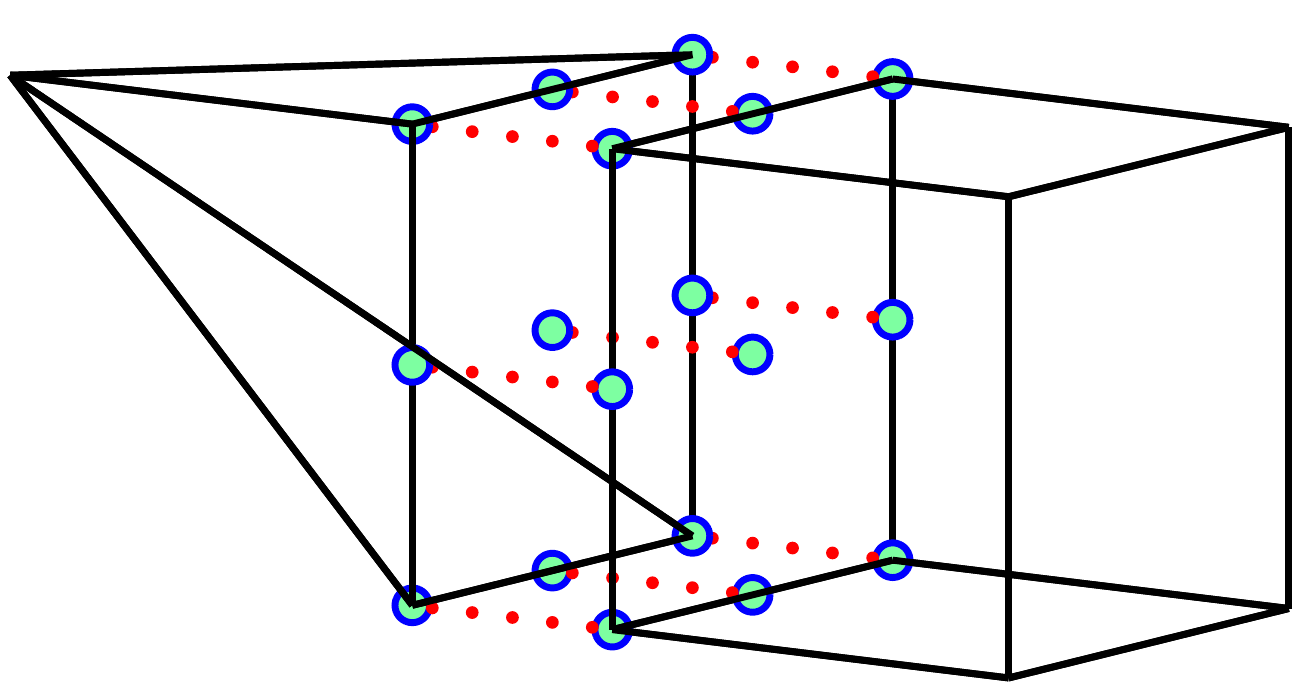}}
\hspace{2em}
\subfloat[Gauss-Legendre]{\includegraphics[width=.32\textwidth]{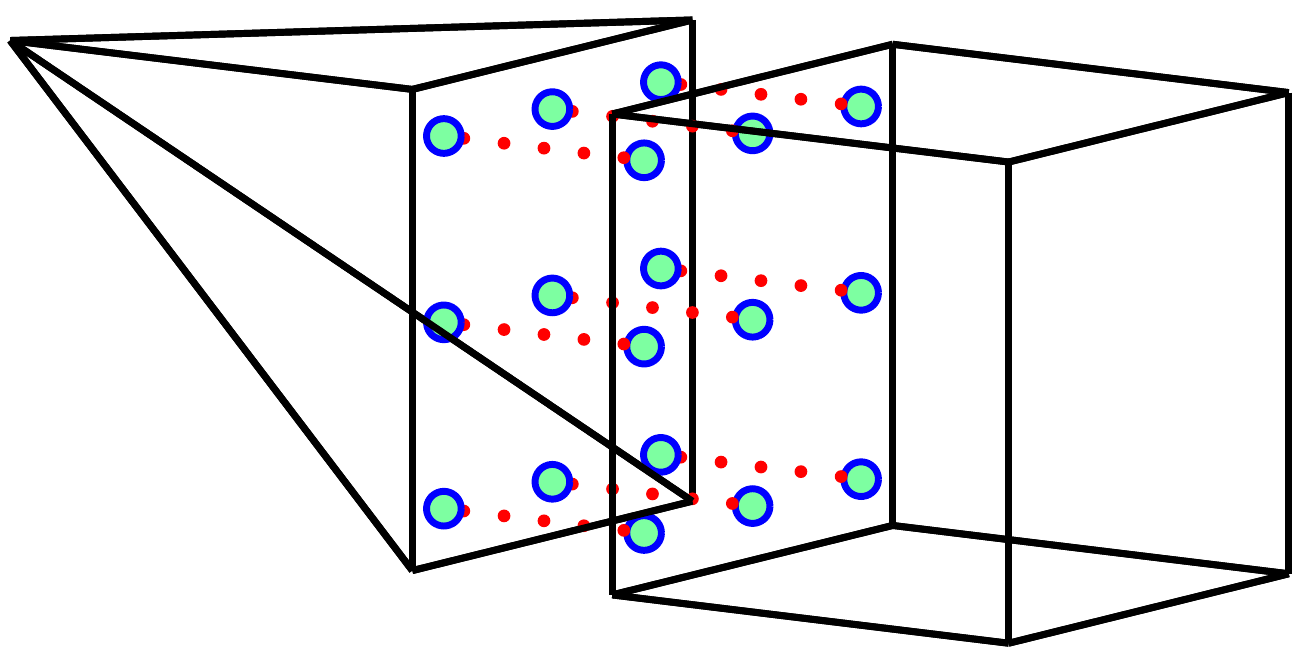}}
\caption{The choice of nodal basis for the hexahedron determines the quadrature rule on quadrilateral faces of both pyramids (shown above) and wedges.}
\label{fig:surface_nodes}
\end{figure}

For hexahedral elements, the strong and skew-symmetric forms are equivalent under both GL and SEM bases.  This was shown by Kopriva and Gassner in \cite{kopriva2010quadrature} and for SEM, relies on the exact cancellation of Gauss-Legendre-Lobatto quadrature errors in volume and surface integrals.  We may thus exploit the improved computational efficiency of the strong form for hexahedra for both SEM or GL nodal bases.  For tetrahedra, since the mapping is constant for planar simplices, volume integrals are computed exactly using quadrature free techniques, and surface integrals are computed exactly using quadrature. Thus, the skew-symmetric and strong forms are equivalent for vertex-mapped tetrahedra, and we adopt the strong form for improved computational efficiency.  

For the wedge, we are restricted to the skew symmetric form, since integrals over the rational LSC-DG basis are inexact for any polynomial cubature rule.  For the pyramid, the energy stability of the variational form depends on the choice of SEM or GL quadrature for the quadrilateral face.  Under Gauss-Legendre quadrature, surface integrals for the pyramid are exact, and both the strong and skew-symmetric form are equivalent.  However, if SEM quadrature is used for the quadrilateral face, the surface integral is inexact and we must use the skew-symmetric form for stability.  

We may now choose freely between a SEM or GL nodal basis for the hexahedron, which also determines the choice of quadrature on quadrilateral faces.  SEM nodes tends to be more efficient than Gauss-Legendre nodes --- with Gauss-Legendre quadrature, since the nodal points lie in the interior of the hexahedron, an additional evaluation step is necessary to compute the solution at points on the surface.  Since the SEM nodal basis contains both volume and surface quadrature points as degrees of freedom, evaluation of the solution on the surface requires only retrieval of a single degree of freedom, and this extra step is avoided.  Additionally, due to the Lagrange property of the SEM nodal basis functions, the surface contributions are sparser for the SEM formulation than for the GL formulation.  

However, it is also known that the inexact integration in SEM quadrature reduces the accuracy of the resulting solution \cite{kopriva2010quadrature}, though the inner products generated by both underintegrated SEM quadrature and full Gauss-Legendre quadrature are known to be equivalent (with constants that do not grow in $N$).  We will also show in Section~\ref{sec:elemverify} that SEM underintegration can reduce the observed order of convergence for smooth solutions.  Additionally, the use of SEM on hybrid meshes also restricts the pyramid to the skew-symmetric form, which is typically less computationally efficient than the strong form, while the use of Gauss-Legendre nodes on the hex allows for the use of the strong form for the pyramid.  

We naturally arrive at two energy stable formulations based on the two nodal bases for the hexahedron, which are summarized for each element type in Table~\ref{table:forms}.  We refer to the formulation using SEM hexahedra as the ``SEM formulation'', and the formulation using Gauss-Legendre hexahedra as the ``GL formulation''.  We will compare the accuracy and efficiency of each formulation in Section~\ref{sec:numerics}.  

\begin{table}
\centering                                                                                   
\begin{tabular}{|c|c|c|}
\hline                                                   
 & SEM Formulation & GL Formulation \\ 
\hline                                                   
Tetrahedra & Nodal, strong form & Nodal, strong form \\ 
\hline                                                   
Pyramid & Semi-nodal, skew-symmetric form & Semi-nodal, strong form\\ 
\hline                                                   
Wedge & Modal LSC-DG, skew-symmetric form & Modal LSC-DG, skew-symmetric form \\ 
\hline                                                   
Hexahedra & Nodal SEM, strong form & Nodal Gauss-Legendre, strong form\\ 
\hline                                                   
\end{tabular}
\caption{Summary of stable bases and formulations for element type-specific variational forms.}
\label{table:forms}
\end{table}

\subsection{Element-specific operations}

Our numerical implementation tailors both the local variational formulation and computational operations to each specific type of element.  For each element type, we construct three kernels 
\begin{enumerate}
\item \textbf{Volume kernel:} compute the volume contribution (integrals over the interior of the element $K$).
\item \textbf{Surface kernel:} compute the surface contribution (integrals over the surface of the element $\partial K$). 
\item \textbf{Update kernel:} apply the inverse of the mass matrix where necessary, execute a step of Adams-Bashforth, and evaluate/store the solution at cubature points on the surface.  
\end{enumerate}

Algorithms describing the implementation of relevant kernels for each element type are given in the following sections.  For the wedge, pyramid, and tetrahedron, surface and update kernels are very similar to those given in \cite{chan2015orthogonal} for evaluation of the solution at surface cubature points.  Their implementation differs only by the specific surface cubature, which is constructed by combining appropriate cubatures for triangular and quadrilateral faces.  For triangular faces, we use quadrature rules for the triangle computed by Xiao and Gimbutas \cite{xiao2010quadrature} that are exact for polynomials of degree $2N$.  For quadrilateral faces, we use a tensor product Gauss-Legendre or Gauss-Legendre-Lobatto (SEM) cubature rule with $(N+1)^2$ points, depending on which hexahedral nodal basis is chosen.

\subsubsection{Hexahedral elements}
\label{sec:hexspecific}

The volume contribution for the hexahedron may be computed efficiently by exploiting both the Lagrange property of the nodal basis and the tensor-product form of the basis functions.  Basis functions for the hexahedron are constructed 
\[
\phi_{ijk}(r,s,t) = \ell_i(r)\ell_j(s)\ell_k(t),
\]
where $\ell_i(r)$ is the Lagrange polynomial at the $i$th node in the $r$ direction, and similarly for $\ell_j(s),\ell_k(t)$.  This implies that the evaluation of volume integrals using quadrature reduces to
\[
\int_{\widehat{K}} \phi_{lmn}\pd{u}{x} J = \sum_{l',m',n'=1}^{N+1} w_{l',m',n'} J_{l',m',n'} \left.\pd{u}{x}{}\right|_{l',m',n'} \left.\phi_{lmn}\right|_{l',m',n'} = w_{lmn}J_{lmn} \left.\pd{u}{x}{}\right|_{l,m,n}
\]
due to the Lagrange property of $\phi_{lmn}$ at quadrature nodes.  The values of derivatives $\pd{u}{x}$ require computation of $\pd{u}{r},\pd{u}{s},\pd{u}{t}$ at nodal points, which reduces to the computation of 1D derivatives
\[
\left.\pd{u}{r}{}\right|_{lmn} 
= \sum_{i=1}^{N+1} \pd{\ell_i(r_{l})}{r}{} \sum_{j=1}^{N+1} \ell_j(s_{m})\sum_{k=1}^{N+1} u_{ijk}\ell_k(t_{n}) = \sum_{i=1}^{N+1} \pd{\ell_i(r_{l})}{r}{} u_{imn}.
\]
The integral is then scaled by $M^{-1}_{lmn,lmn} = \LRp{w_{lmn}J_{lmn}}^{-1}$, removing the weight and geometric factor multiplying the derivative.  This implementation is described in detail in Algorithm~\ref{alg:hex_vol}.

\begin{algorithm}
\begin{algorithmic}[1]
\Procedure{Hexahedron Volume kernel}{}
\State Load nodal values $u_{ijk}$ (for $1 \leq i,j,k \leq N+1$) and the 1D operator $D_{ij}$ into shared memory.  
\For{each node $\vect{x}_{ijk}$}
\State Compute derivatives with respect to reference coordinate $r,s,t$.  
\[
\pd{u(\vect{x}_{ijk})}{r}{} = \sum_{l = 1}^{N+1} D_{il} u_{ljk}, \qquad 
\pd{u(\vect{x}_{ijk})}{s}{} = \sum_{l = 1}^{N+1} D_{jl} u_{ilk}, \qquad 
\pd{u(\vect{x}_{ijk})}{t}{} = \sum_{l = 1}^{N+1} D_{kl} u_{ijl}.
\]
\State Scale by change of variables factors $\pd{rst}{xyz}{}$ to compute $\pd{u}{x}{}, \pd{u}{y}{}, \pd{u}{z}{}$ at point $\vect{x}_{ijk}$.  
\[
\pd{u}{x}{} = \pd{u}{r}{} \pd{r}{x}{} + \pd{u}{s}{} \pd{s}{x}{} + \pd{u}{t}{} \pd{t}{x}{}.
\]
\EndFor
\EndProcedure
\end{algorithmic}
\caption{Algorithm for the hexahedron volume kernel (both SEM and GL).}
\label{alg:hex_vol}
\end{algorithm}

The computation of numerical fluxes requires the evaluation of the solution on the surface of a hexahedron.  This is done in the update kernel for the hexahedron, and is described in Algorithm~\ref{alg:hex_upd}.  For a 1D nodal basis at GL points, we may evaluate the solution at endpoints $\pm 1$ via
\[
u(-1) = \sum_{k=1}^{N+1} \ell_k(-1) u_k, \qquad u(-1) = \sum_{k=1}^{N+1} \ell_k(1) u_k= \ell_{N-k}(-1) u_k, 
\]
where we have used symmetry of the GL points across $0$.  We store values of $\ell_k(-1)$ in an array $V^f_k = \ell_k(-1)$.  
\begin{algorithm}
\begin{algorithmic}[1]
\Procedure{Update kernel}{}
\State Load nodal values $u_{ijk}$ (for $1 \leq i,j,k \leq N+1$) and $V^f$ into shared memory.  
\For{each $1\leq i,j,k\leq N+1$}
\State For face cubature points $\vect{x_{jk}}$ (face $r = \pm 1$), $\vect{x_{ik}}$ (face $s = \pm 1$), and $\vect{x_{ij}}$ (face $t = \pm 1$).  
\begin{align*}
u( \vect{x_{jk}}; r = -1 ) &= \sum_{m=1}^{N+1}V^f_m u_{mjk}, \qquad u( \vect{x_{jk}}; r = 1 ) = \sum_{m=1}^{N+1}V^f_{N-m} u_{mjk}, \\
u( \vect{x_{ik}}; s = -1 ) &= \sum_{m=1}^{N+1}V^f_m u_{imk}, \qquad u( \vect{x_{ik}}; s = 1 ) = \sum_{m=1}^{N+1}V^f_{N-m} u_{imk}, \\
u( \vect{x_{ij}}; t = -1 ) &= \sum_{m=1}^{N+1}V^f_m u_{ijm}, \qquad u( \vect{x_{ij}}; t = 1 ) = \sum_{m=1}^{N+1}V^f_{N-m} u_{mjk}.
\end{align*}
\EndFor
\EndProcedure
\end{algorithmic}
\caption{Algorithm for the GL hexahedron update kernel.}
\label{alg:hex_upd}
\end{algorithm}
For a SEM basis, $\ell_{k}(-1) = \delta_{k1}$, and the evaluation of the basis on the boundary reduces to the loading of the proper nodal degrees of freedom on each face.  

The computation of the hex surface contribution differs slightly between SEM and Gauss-Legendre nodal bases.  We compute contributions face by face, mapping surface integrals to the reference quadrilateral.  For example, on the face corresponding to $t=-1$, this gives
\[
\int_{r}\int_s \ell_{ijk} u J^s = \int_{r}\int_s \ell_{i}(r)\ell_j(s) \ell_k(-1) \left.uJ^s\right|_{r,s,-1}= w_i w_j \ell_{k}(-1) \left.u J^s\right|_{r_i,s_j,-1}.
\]
where $w_i, w_j$ are 1D quadrature weights and $J^s$ is the determinant of the Jacobian mapping from the physical to reference quadrilateral face.   For a Gauss-Legendre basis, $\ell_{k}(-1)$ must be evaluated explicitly.  For the SEM formulation, the evaluation of $\ell_k(\pm 1)$ may be skipped, and further optimizations may be done by noting that by the inverse mass matrix may be premultiplied into the geometric factors, and that due to the Lagrange property of nodal polynomials, surface RHS contributions for interior nodes are zero.  This quantity is then scaled by the inverse mass matrix to compute the surface contribution to the RHS, and the full procedure is outlined in Algorithm~\ref{alg:hex_surf}.

\begin{algorithm}
\begin{algorithmic}[1]
\Procedure{Hexahedron Surface kernel}{ replace $u$ with an appropriate numerical flux.}
\For{each $1\leq i,j,k\leq N+1$}
\State For faces $r \pm 1$, compute the surface RHS contribution
\[
M^{-1}_{ijk,ijk} w_jw_k \left.J^s\right|_{s_j, t_k}\ell_i(\pm 1) u(\pm 1, s_j, t_k).
\]
\State Compute similar contributions for faces $s,t\pm 1$
\[
M^{-1}_{ijk,ijk} w_iw_k \left.J^s\right|_{r_i, t_k}\ell_j(\pm 1) u(r_i, \pm 1, t_k), \qquad 
M^{-1}_{ijk,ijk} w_iw_j \left.J^s\right|_{r_i, s_j}\ell_k(\pm 1) u(r_i, s_j,\pm 1). 
\]
\EndFor
\EndProcedure
\end{algorithmic}
\caption{Algorithm for the hexahedron GL surface kernel.  }
\label{alg:hex_surf}
\end{algorithm}

\subsubsection{Tetrahedral elements}

Assuming Lagrange polynomials $\ell_j(r,s,t)$ defined by $N_p$ nodal points $(r_i,s_i,t_i)$ on a tetrahedron, we may define operators which evaluate derivatives at those same nodal points
\[
D^r_{ij}  = \pd{\ell_j(r_i,s_i,t_i)}{r}{}, \qquad D^s_{ij}  = \pd{\ell_j(r_i,s_i,t_i)}{s}{}, \qquad D^t_{ij}  = \pd{\ell_j(r_i,s_i,t_i)}{t}{}.
\]
As shown in \cite{hesthaven2007nodal}, RHS contributions may be computed using only the above derivative operators 
\[
S^x_{ij} = \int_{K} \ell_i \pd{\ell_j}{x}, \qquad M^{-1}S^x \vect{u} = \pd{u(\vect{x})}{x}{},
\]
where $\vect{x},\vect{u}$ are vectors of the nodal positions $\vect{x}_i$ and nodal degrees of freedom $u_j$.

\begin{algorithm}
\begin{algorithmic}[1]
\Procedure{Tetrahedron Volume kernel}{}
\State Load nodal degrees of freedom $u_j$ into shared memory.
\For{each node $\vect{x}_i$, $i = 1,\ldots, N_p$}
\State Compute derivatives with respect to reference coordinates $r,s,t$
\[
\pd{u(\vect{x}_i)}{r}{} = \sum_{j = 1}^{N_p} D^r_{ij} u_j, \qquad \pd{u(\vect{x}_i)}{s}{} =  \sum_{j = 1}^{N_p}  D^s_{ij} u_j, \qquad \pd{u(\vect{x}_i)}{t}{} = \sum_{j = 1}^{N_p}  D^t_{ij}u_j
\]
\State Scale by change of variables factors $\pd{rst}{xyz}{}$ to compute $\pd{u}{x}{}, \pd{u}{y}{}, \pd{u}{z}{}$ at point $\vect{x}_{i}$.  
\EndFor
\EndProcedure
\end{algorithmic}
\caption{Algorithm for the tetrahedron volume kernel.}
\label{alg:tet_vol}
\end{algorithm}

In the update kernel, we evaluate solution fields at cubature points on the surface.  For nodal tetrahedra, this evaluation may also be done face-by-face to reduce cost at high orders, since traces of solution fields depend only on face nodal values.  

\subsubsection{Wedge elements}

Evaluation of the skew-symmetric form involves the computation of two types of volume integrals
\[
\int_K \tilde{\phi_i} \pd{u}{x}{}, \qquad \int_K \pd{\tilde{\phi_i}}{x}{}u, \qquad \tilde{\phi}_{i} = \frac{\phi_{i}}{\sqrt{J}}.
\]
Since derivatives in $x$ (and similarly for $y,z$) of the LSC-DG basis are given by
\[
\pd{\tilde{\phi}}{x} = \pd{\LRp{\phi/\sqrt{J}}}{x} = \frac{1}{\sqrt{J}}\LRp{\pd{\phi}{x} - \frac{\phi}{2J}\pd{J}{x}},
\]
we precompute and store the values of the physical gradient $\Grad_{xyz}{J}$ at each cubature point.  Derivatives of $u$ and the first volume integral $\int_K \tilde{\phi_i} \pd{u}{x}{}$ reduce to integrals over the reference element
\[
u = \sum_{j=1}^{N_p} u_j \frac{\phi_j}{\sqrt{J}}, \qquad \pd{u}{x} = \sum_{j=1}^{N_p} u_j \LRp{\pd{\phi_j}{x} - \frac{\phi_j}{2J}\pd{J}{x}},\qquad \int_K \tilde{\phi_i} \pd{u}{x}{} = \int_{\widehat{K}} \frac{\phi_{i}}{\sqrt{J}} \pd{u}{x}{} J = \int_{\widehat{K}} \phi_{i} \pd{u}{x}{}.
\]
Similarly, the second volume integral $\int_K \pd{\tilde{\phi_i}}{x}{}u$ reduces to 
\begin{align*}
\int_K \pd{\tilde{\phi_i}}{x} u &=  \int_{\widehat{K}} \LRp{\pd{\phi_i}{x} - \frac{\phi_i}{2J}\pd{J}{x}} u = 
\int_{\widehat{K}} \LRp{\pd{\phi_i}{r}{} \pd{r}{x}{} + \pd{\phi_i}{s}{} \pd{s}{x}{} + \pd{\phi_i}{t}{} \pd{t}{x}{} - \phi_i \frac{1}{2J}\pd{J}{x}{}}u \\
&= \int_{\widehat{K}}
\LRs{\pd{\phi_i}{r}{}, \pd{\phi_i}{s}{}, \pd{\phi_i}{t}{}, \phi_i}
\cdot
\LRs{u\pd{r}{x}{}, u\pd{s}{x}{}, u\pd{t}{x}{}, -u \frac{1}{2J}\pd{J}{x}{}}.
\end{align*}
For LSC-DG, we split the evaluation of the skew-symmetric formulation into the computation of the trial integrand (involving the solution) and the test integrand (involving test functions $\phi_i$).  Intermediate values used in the test integrand are stored in global memory.  In the first step, derivatives of the solution are computed, and values of the solution at cubature points are premultiplied by appropriate change of variable factors, as indicated in the computation of the second volume integral above.  
In the second step, test functions and their derivatives in the reference $r,s,t$ coordinates are evaluated at cubature points and summed up to compute all integrals.  

The splitting of the wedge volume kernel into two steps allows us to load geometric factors and derivatives of $J$ only in the first step, while maintaining fast coalesced memory access patterns.  Additionally, splitting into two kernels and writing intermediate values to global memory avoids the overuse of shared memory, which can reduce workgroup occupancy.  

Finally, since the wedge is a tensor product of triangle and line elements, we may additionally decompose interpolation and derivative operators into 2D triangle and 1D operators.  Exploiting the tensor product nature of these operators on the wedge leads to a reduction in the cost of quadrature.  We assume that wedge basis functions may be decomposed as
$\phi_i(r,t)\phi_j(s)$, where $\phi_i(r,t)$ are basis functions over the triangle.  Then, 
\[
u(r,s,t) = \sum_{i=1}^{N_p^{\rm tri}} \sum_{j=1}^{N+1} u_{ij}\phi_i(r,t)\phi_j(s)
\]
where $u_{ij}$ are coefficients for the wedge, decomposed into triangle and 1D indices $i$ and $j$, respectively.  Our cubature is defined also as a tensor product of a triangle and 1D cubature rule with $N_c^{\rm tri}$ and $(N+1)$ points, respectively.  For cubature points $\vect{x}_{kl}$ and weights $w_{kl}$, where $k$ is the index for a triangle cubature point and $l$ is the index for a 1D cubature rule, we precompute the values of $\phi_i(r,t), \phi_j(s)$ (as well as their derivatives) 
\begin{align*}
V_{ik} &= \phi_i(r_k,t_k), \qquad V^{r,t}_{ik} = \pd{\phi_i(r_k,t_k)}{r,t}{} \qquad \quad 0 \leq k \leq N_c^{\rm tri},\\
V^{\rm 1D}_{jl} &= \phi_j(s_l), \qquad D^{\rm 1D}_{jl} = \pd{\phi_j(s_l)}{s}{} \qquad 0\leq l \leq N+1,
\end{align*}
which are used in Algorithms~\ref{alg:wedge_vol1} and \ref{alg:wedge_vol2}.

\begin{algorithm}
\begin{algorithmic}[1]
\Procedure{Wedge volume kernel Part 1}{}
\State Load coefficients $u_{ij}$ and 1D interpolation/derivative operators $V^{\rm 1D} , D^s_{\rm 1D}$ into shared memory.  
\For{each cubature point $\vect{x}_k$, $k = 1,\ldots N_c$}
\State Compute (using the tensor product form) the solution and reference coordinate derivatives 
\begin{align*}
u(\vect{x}_k) &= \sum_{i = 1}^{N_p^{\rm tri}} V_{ki} \sum_{j=1}^{N+1} V^{\rm 1D}_{kj} u_{ij},\\ 
\pd{u(\vect{x}_k)}{r,t}{} &= \sum_{i = 1}^{N_p^{\rm tri}} D^{r,t}_{ki} \sum_{j=1}^{N+1} V^{\rm 1D}_{kj} u_{ij}, \qquad
\pd{u(\vect{x}_k)}{s}{} = \sum_{i = 1}^{N_p^{\rm tri}} V_{ki} \sum_{j=1}^{N+1} D^{\rm 1D}_{kj} u_{ij}.
\end{align*}
\State Scale by change of variables factors $\pd{rst}{xyz}{}$ to compute $\pd{u}{x}{}, \pd{u}{y}{}, \pd{u}{z}{}$ at cubature point $\vect{x}_{k}$.  
\State Write (to global memory) intermediate derivatives and premultiplied values at cubature points
\[
\Grad_{xyz} u, \qquad \LRs{u\pd{r}{x}{}, u\pd{s}{x}{}, u\pd{t}{x}{}, -u \frac{1}{2J}\pd{J}{x}{}}.
\]
\EndFor
\EndProcedure
\end{algorithmic}
\caption{Algorithm for part 1 of the wedge volume kernel.}
\label{alg:wedge_vol1}
\end{algorithm}

\begin{algorithm}
\begin{algorithmic}[1]
\Procedure{Wedge Volume kernel part 2}{}
\For{each cubature point $\vect{x}_k$, $k = 1,\ldots N_c$}
\State Load intermediate values at cubature points
\[
\Grad_{xyz} u, \qquad \LRs{u\pd{r}{x}{}, u\pd{s}{x}{}, u\pd{t}{x}{}, -u \frac{1}{2J}\pd{J}{x}{}}.
\]
\State Compute integrals via cubature
\begin{align*}
\int_{\widehat{K}} \phi_{ij} \pd{u}{x} &= \sum_{k=1}^{N^{\rm tri}_c} \LRp{V_{ki}}^T \sum_{l=1}^{N+1}\LRp{V^{\rm 1D}}^T_{li} \pd{u(\vect{x}_{kl})}{x}{}w_{kl}\\
\int_{\widehat{K}} \pd{\phi_{ij}}{x} u &=  \sum_{k=1}^{N^{\rm tri}_c} \sum_{l=1}^{N+1}w_{kl}\LRs{D^r_{ki} V^{\rm 1D}_{lj},V_{ki}{D^{\rm 1D}_{lj}},D^t_{ki}V^{\rm 1D}_{lj} ,V_{ki}V^{\rm 1D}_{lj}} \cdot \LRs{u\pd{r}{x}{}, u\pd{s}{x}{}, u\pd{t}{x}{}, -u \frac{1}{2J}\pd{J}{x}{}}.
\end{align*}
\EndFor
\EndProcedure
\end{algorithmic}
\caption{Algorithm for part 2 of the wedge volume kernel.}
\label{alg:wedge_vol2}
\end{algorithm}
Compared to a sum-factorization approach, the tensor product sum in Algorithms~\ref{alg:wedge_vol1}, \ref{alg:wedge_vol2} does not decrease the total number of computations.  We found that implementations of sum factorization required larger amounts of either shared or register memory, both of which decrease occupancy.  Instead, we exploit the fact that triangle operators are constant for each 1D summation, increasing data reuse.  

For Algorithm~\ref{alg:wedge_vol2}, we again reuse loaded triangle operators, which are constant for each 1D summation.  Additionally, we may load premultiplied information and execute dot products in the summation using \verb+float4+ data and operations, which are pipelined for faster execution on certain GPUs.

\subsubsection{Pyramidal elements}

The form of the pyramid mapping and the semi-nodal pyramid basis lend themselves to simpler evaluation of integrals.  We introduce a quadrature-free method of computing volume contributions, which improves upon the quadrature-based method for the pyramid basis described in \cite{chan2015orthogonal}.  Let $u$ be a function in the span of the pyramid basis on the bi-unit reference pyramid $\refpyr$; then,
\begin{align}
\int_{\refpyr} u(r,s,t) \phi_{lmn}(r,s,t) &= \int_a \frac{\ell^n_l(a)}{\sqrt{w^n_l}} \int_b\frac{\ell^n_m(b)}{\sqrt{w^n_m}}\int_c u(a,b,c)  \frac{P^{2n+3}_{N-n}(c)}{\sqrt{C^N_{N-n}}} \LRp{\frac{1-c}{2}}^{2+n} \nonumber\\
&= \sqrt{\frac{w^n_l w^n_m}{C^N_{N-n}}} \int_c u(a^n_l,b^n_m,c)  P^{2n+3}_{N-n}(c) \LRp{\frac{1-c}{2}}^{2+n}.
\label{eq:pyr_int}
\end{align}

Since we assume $u$ is in the pyramidal space, it may be represented with degrees of freedom $u_{ijk}$.  Evaluating $\pd{u}{r}{}$ using the chain rule gives
\[
\pd{u}{a}{}\pd{a}{r}{} = \LRp{\frac{2}{1-c}} \sum_{k=0}^N\sum_{i=0}^k\sum_{j=0}^k\frac{1}{\sqrt{w^k_iw^k_j}}\pd{\ell^k_i(a)}{a}{}\ell^k_j(b) \frac{P^{2k+3}_{N-k}(c)\LRp{\frac{1-c}{2}}^{k}}{\sqrt{C^N_{N-k}}}u_{ijk}.
\]
Putting this together with (\ref{eq:pyr_int}), we have
\[
\int_{\refpyr}\phi_{lmn}\pd{u}{r} = \sum_{k=0}^N \int_c \frac{P^{2k+3}_{N-k} P^{2n+3}_{N-n}\LRp{\frac{1-c}{2}}^{1+k+n}}{\sqrt{C^N_{N-n}C^N_{N-k}}}  \sum_{i=0}^k\sum_{j=0}^k \sqrt{\frac{w^n_l w^n_m}{{w^k_iw^k_j }}}{\pd{\ell^k_i(a^n_l)}{a}{}\ell^k_j(b^n_m)} u_{ijk}.
\]
Denoting
\[
D^r_{lmn,ijk} = \sqrt{\frac{w^n_l w^n_m}{{w^k_iw^k_j }}}{\pd{\ell^k_i(a^n_l)}{a}{}\ell^k_j(b^n_m)}M^c_{nk}, \quad M^c_{nk} = \int_c \frac{P^{2k+3}_{N-k} P^{2n+3}_{N-n}\LRp{\frac{1-c}{2}}^{1+k+n}}{\sqrt{C^N_{N-n}C^N_{N-k}}},
\]
we have that $D^r$ behaves as a weak derivative operator over the reference element
\[
\int_{\refpyr}\phi_{lmn}\pd{u}{r}{} = \sum_{k=0}^N\sum_{i=0}^k\sum_{j=0}^k D^r_{lmn,ijk} u_{ijk}.
\]
We may similarly define $D^s$ such that
\[
\int_{\refpyr}\phi_{lmn} \pd{u}{s}{}= D^s_{lmn,ijk} = \sqrt{\frac{w^n_l w^n_m}{{w^k_iw^k_j }}}{\ell^k_i(a^n_l)\pd{\ell^k_j(b^n_m)}{b}{}}M^c_{nk}.
\]
To take derivatives with respect to the $t$ coordinate, we use the chain rule for $\pd{u}{t}{}$
\begin{align*}
\pd{u}{t}{} &= \pd{u}{a}{} \pd{a}{t}{}+ \pd{u}{b}{} \pd{b}{t}{}+\pd{u}{c}{}  = 
\pd{u}{a}{} \LRp{\frac{1+a}{2}}\LRp{\frac{2}{1-c}} + \pd{u}{b}{} \LRp{\frac{1+b}{2}}\LRp{\frac{2}{1-c}} +\pd{u}{c}{} \\
&= \LRp{\frac{1+a}{2}}\pd{u}{r}{}  + \LRp{\frac{1+b}{2}}\pd{u}{s}{}  +\pd{u}{c}{} 
\end{align*}
An operator for the $t$ derivative is constructed similarly.  We first define a derivative operator in $c$
\begin{align*}
D^c_{lmn,ijk} &= \sqrt{\frac{w^n_l w^n_m}{{w^k_iw^k_j }}}{\ell^k_i(a^n_l){\ell^k_j(b^n_m)}} \int_c \pd{\LRs{P^{2k+3}_{N-k}(c)\LRp{\frac{1-c}{2}}^{k}}}{c}{}\frac{P^{2n+3}_{N-n}(c)\LRp{\frac{1-c}{2}}^{2+n}}{\sqrt{C^N_{N-n}C^N_{N-k}}}.
\end{align*}
Using this, $D^t$ may then be defined as
\begin{align*}
D^t_{lmn,ijk} &= \LRp{\frac{1+a^k_i}{2}} D^r_{lmn,ijk} + \LRp{\frac{1+b^k_j}{2}}D^s_{lmn,ijk} + D^c_{lmn,ijk}.
\end{align*}
We now exploit the fact that both geometric factors $\pd{rst}{xyz}$ and the determinant of the Jacobian $J$ are constant in the $c$ (and thus $t$) direction for all vertex-mapped pyramids \cite{bergot2013high, chan2015orthogonal}.  As a result, geometric factors are constant in the index $n$.  This implies that the integral of a derivative in the $x$ coordinate over a physical pyramid may be given as
\begin{align*}
\int_{\pyr} \phi_{lmn} \pd{u}{x} &= \int_{\refpyr} \phi_{lmn}\LRp{ \pd{u}{r}\pd{r}{x} +  \pd{u}{s}\pd{s}{x} +  \pd{u}{t}\pd{t}{x}} J \\
&= J_{lm} \LRc{\LRp{\pd{r}{x}{}}_{lm}\sum_{ijk} D^r_{lmn,ijk} u_{ijk} +  \LRp{\pd{s}{x}{}}_{lm}\sum_{ijk} D^s_{lmn,ijk} u_{ijk} +\LRp{\pd{t}{x}{}}_{lm}\sum_{ijk} D^t_{lmn,ijk} u_{ijk}}.
\end{align*}
The above expression is simply multiplication by (square) semi-nodal operators $D^r,D^s, D^t$ and entry-wise scalings by geometric factors, similarly to nodal methods for hexahedral or tetrahedral elements.  Furthermore, the scaling by $J_{lm}$ is removed when applying the inverse of the diagonal mass matrix 
\[
M_{lmn,lmn}^{-1} = \frac{1}{J_{lm}}.
\]
When computing integrals in the skew-symmetric form, we may simply apply the transpose of each operator
\[
\int_{\refpyr} \pd{\phi_{lmn}}{r} u J = \sum_{ijk} \LRp{D^r}^T_{lmn,ijk}J_{ij} u_{ijk}.
\]
and similarly for $\LRp{D^s}^T, \LRp{D^t}^T$.  In this case, the factor of $J$ is not cancelled out after multiplying by $M^{-1}$.  The pyramid kernel is described in detail in Algorithm~\ref{alg:pyr_vol}.  Similarly to the wedge, the algorithm is made up of two parts (computation of the trial and test portion).  However, unlike the wedge, the semi-nodal nature of the pyramid basis allows us to store intermediate values efficiently in shared memory.  

\begin{algorithm}
\begin{algorithmic}[1]
\Procedure{Pyramid Volume kernel}{}
\State Load semi-nodal degrees of freedom $u_i$ into shared memory.
\For{each semi-nodal basis function $j = 1,\ldots, N_p$}
\State Store premultiplied $u_j$ by geometric change of variables and mapping factors
\[
u^r_j = \pd{r(\vect{x}_j)}{x}{} u_j J(\vect{x}_j), \qquad u^s_j = \pd{s(\vect{x}_j)}{x}{} u_jJ(\vect{x}_j), \qquad u^t_j = \pd{t(\vect{x}_j)}{x}{} u_jJ(\vect{x}_j)
\]
\EndFor
\For{each semi-nodal basis function $i = 1,\ldots, N_p$}
\State Compute weak derivatives (no integration by parts) with respect to reference coordinates $(r,s,t)$ using semi-nodal derivative operators
\[
\int_{\widehat{K}}\phi_i\pd{u}{r}{} = \sum_{j=1}^{N_p}D^r_{ij} u_j, \qquad
\int_{\widehat{K}}\phi_i\pd{u}{s}{} = \sum_{j=1}^{N_p}D^s_{ij} u_j, \qquad
\int_{\widehat{K}}\phi_i\pd{u}{t}{} = \sum_{j=1}^{N_p}D^t_{ij} u_j.
\]
\State Scale by change of variables factors $\pd{rst}{xyz}{}$ to compute weak physical derivatives and RHS contributions. 
\[
\int_{\widehat{K}}\phi_i\pd{u}{x}{} = \int_{\widehat{K}}\phi_i \LRp{\pd{u}{r}{} \pd{r(\vect{x}_i)}{x}{} + \pd{u}{s}{} \pd{s(\vect{x}_i)}{x}{} + \pd{u}{t}{} \pd{t(\vect{x}_i)}{x}{}}
\]
\State Compute weak derivatives (integrated by parts) with respect to physical coordinates $(x,y,z)$ using transposed semi-nodal derivative operators and premultiplied degrees of freedom.  Scale by the inverse mass matrix to compute RHS contributions.  
\begin{align*}
M^{-1}_{ii}\int_{K}\pd{\phi_i}{x}{} u &= M^{-1}_{ii}\int_{\widehat{K}}\LRp{\pd{\phi_i}{r}{}\pd{r}{x} + \pd{\phi_i}{s}{}\pd{s}{x} + \pd{\phi_i}{t}{}\pd{t}{x}} u J\\
&= \frac{1}{J(\vect{x}_i)}\sum_{j = 1}^{N_p}\LRp{\LRp{D^{r}}^T_{ij} u^r_j + \LRp{D^{s}}^T_{ij} u^s_j + \LRp{D^{t}}^T_{ij} u^t_j}
\end{align*}
\EndFor
\EndProcedure
\end{algorithmic}
\caption{Algorithm for the (skew-symmetric) pyramid volume kernel.}
\label{alg:pyr_vol}
\end{algorithm}

\subsection{Many-core implementation}
\label{sec:implementation}


The low-storage DG method described above has been implemented in \verb+hybridg+, a portable, GPU accelerated solver for the acoustic wave equation on hybrid meshes.  Portability between OpenMP, OpenCL and CUDA platforms is achieved through the OCCA programming model \cite{medina2014occa}.  The solution is evolved in time using a multi-rate Adams-Bashforth (MRAB) scheme.  Support for multiple GPUs is facilitated through MPI, though load-balancing strategies for scalability remain to be investigated.  The kernels for each element type are optimized according to the operations described above.  Additional optimizations may be done by tuning the number of elements processed per workgroup (batching).  In Section~\ref{sec:comp}, we give results where optimal batch sizes are determined experimentally for each element type and approximation order $N$.  

Due to large variations in element sizes, as well as variations in element type-dependent trace constants, local timestep restrictions may vary significantly over the mesh.  To sidestep overly restrictive global timesteps, we use multi-rate timestepping in the form of a 3rd order Adams-Bashforth scheme.  The local timestep $d\tau_K$ for each element is derived in Section~\ref{sec:timestep}, and the global timestep $d\tau_{\rm min}$ is taken as the minimum over all local timesteps.  For the MRAB scheme, $N_{\rm levels}$ timestep levels are determined via
\[
d\tau_{\rm lev} = 2^{N_{\rm levels}-{\rm lev}} d\tau_{\rm min}, \quad 1\leq {\rm lev}\leq N_{\rm levels}.
\]
Elements are then binned into each timestep level --- a given element $K$ is assigned a timestep of $d\tau_{\rm lev}$ if $d\tau_{{\rm lev}-1} \leq d\tau_K \leq d\tau_{\rm lev}$.  A second sweep through the mesh moves elements from coarser timesteps to a finer timesteps in order to guarantee that neighboring elements differ only by one level at most (or that the timesteps of neighboring elements differ only by a factor of 1 or 2).  The solution is then updated according to the coarsest timestep, and each timestep level is evolved $2^{{\rm lev}-1}$ times for every coarse timestep.  

Since the details of the implementation are independent of element type, we refer the reader to \cite{godel2010gpu, gandham2014gpu, modave2015accelerated} for a description of the multi-rate scheme on triangular and tetrahedral meshes.

\section{Timestep restrictions}
\label{sec:timestep}

Stable local and global timesteps are necessary for multi-rate schemes.  We derive here timestep restrictions for hybrid meshes by bounding the DG operator norm.  These bounds are given in terms of order-dependent constants and physical/geometric quantities for each element type.  

To simplify notation, we define group trial and test variables $U,V$ 
\[
U = \LRp{\begin{array}{c}p\\ \bm{u}\end{array}}, \quad V = \LRp{\begin{array}{c}v\\ \bm{\tau}\end{array}}.
\]
Under time-explicit DG methods, the discretized acoustic wave equation results in a system of ODEs
\[
\td{U}{\tau} = M^{-1}A(U), \qquad 
V^TMU = \sum_{K\in \Omega_h}\LRp{\int_K \frac{1}{\kappa} p v + \int_K \rho \bm{u} \bm{\tau}.}
\]
where $M$ is the scaled mass matrix.  
The global matrix $A$ is given as the sum of local matrices $A = \sum_{K\in \Omega_h} A_K$, where $A_K$ is given by the local spatial discretization over an element $K \in \Omega_h$
\begin{align}
V^TA_K U &= \int_K \bm{u}\cdot \Grad\phi^- \diff x  - \int_K \Grad p \cdot  \bm{\psi}^- \diff x \label{eqn:AK}\\
&+ \int_{\partial K}\LRp{ \frac{1}{2}\tau_p\jump{p} - \bm{n}^-\cdot \avg{\bm{u}} }\phi^- \diff x  + \int_{\partial K} \bm{n}^-\frac{1}{2}\LRp{\tau_u \jump{\bm{u}}\cdot \bm{n}^- - \jump{p}}\bm{\psi}^- \diff x.\nonumber
\end{align}
We are interested in deriving bounds on the real and imaginary parts of the spectra of the DG operator in order to accurately determine a stable timestep restriction.  For example, we may bound the timestep by $d\tau < 1/\rho(M^{-1}A)$, the inverse of the spectral radius of $M^{-1}A$.  

Prevous work has been done in estimating stable timesteps for structured grids \cite{kubatko2008time, toulorge2011cfl}, and explicit information about the spectra of the discrete DG operator has been derived by Krivodonova and Qin in 1D \cite{krivodonova2013analysis,krivodonova2013analysis2}.  We use an approach similar to Cohen, Ferrieres, and Pernet in \cite{cohen2006spatial} and derive bounds for $\rho(M^{-1}A)$ that depend on explict expressions for the constants in discrete trace and Markov inequalities.  The first bounds the $L^2$ norm of a function on the boundary $\partial K$ of an element $K$ by the $L^2$ norm of the function over the element
\[
\nor{u}^2_{L^2(\partial K)} \leq C_T(N,K) \nor{u}^2_{L^2(K)}
\]
while the latter bounds the $L^2$ norm of the gradient of a function by the $L^2$ norm of the function in the interior
\[
\nor{\Grad u}^2_{L^2(K)} \leq C_M(N,K) \nor{u}^2_{L^2(K)}.
\]
For SEM, the $L^2$ norm is replaced by the equivalent discrete norm computed using GLL quadrature.  The mesh and order-dependent constants $C_T(N,K), C_M(N,K)$ determine the timestep restriction required for stability.  Since we wish to take the timestep as large as possible, we require sharp expressions for these constants in order to accurately estimate $\rho(M^{-1}A)$.  

We note that the bounds we derive assume polynomial basis functions.  This assumption is violated for non-affine mappings of the wedge, where we use rational LSC-DG basis functions.  However, for such bases, we may use weighted Markov and trace inequalities from \cite{warburton2010low} in place of standard polynomial inequalities.  

\subsection{Bounds on the spectra}

We derive here bounds on the spectra of the DG operator based on the constants in trace and Markov inequalities over reference elements.  These bounds apply for both the GL formulation with full mass matrix $M$ (or true $L^2$ inner product) and the SEM lumped mass matrix $M_{\rm SEM}$ (or discrete inner product based on GLL quadrature); the difference between the two is reflected in the constants in each inequality.  



The eigenvalues $\lambda$ and corresponding eigenvectors $v$ of $M^{-1}A$ are given by the generalized eigenvalue problem $Av = \lambda Mv$.  
We may derive a more detailed bound on the spectra of $M^{-1}A$ by decomposing the matrix into symmetric and skew-symmetric parts
\[
A = \LRp{A^s + A^k}, \quad A^s = \frac{1}{2}(A + A^T), \quad A^k = \frac{1}{2}(A - A^T).
\]
From \cite{amir1962elements}, we have that
\begin{align*}
\lambda_{\min}(M^{-1}A^s) &\leq {\rm Re}\LRp{\lambda(M^{-1}A)} \leq \lambda_{\max}(M^{-1}A^s)\\
\lambda_{\min}(M^{-1}A^k) &\leq {\rm Im}\LRp{\lambda(M^{-1}A)} \leq \lambda_{\max}(M^{-1}A^k),
\end{align*}
In other words, the magnitude of the real and imaginary parts of the spectra are bounded by $\rho(M^{-1}A^s)$ and $\rho(M^{-1}A^k)$.  
Since $A^s$ is symmetric and real, its spectral radius is given by the generalized Rayleigh quotient 
\[
\rho(M^{-1}A^s) = \max \LRb{\frac{U^T A^s U}{U^TMU}}.
\]
From the definition of $A_K$ in (\ref{eqn:AK}), we can show that
\begin{align}
\LRb{U^T A^s U} &= \LRb{-\frac{1}{2}\sum_{f\in \Gh} \int_f \LRc{\tau_p\jump{p}^2 + \tau_u\jump{\bm{u}_n}^2}} \leq \frac{1}{2}\sum_{f\in \Gh} \LRc{\tau_p\nor{\jump{p}}^2_{L^2(f)} + \tau_u\nor{\jump{\bm{u}_n}}_{L^2(f)}^2},
\label{eq:sym}
\end{align}
where we have used the normal jump $\jump{\bm{u}_n} = \bm{u}^+ \bm{n}^+ + \bm{u}^-\bm{n}^-$.  We may bound the norm of the jumps 
\[
\frac{1}{2}\nor{\jump{p}}^2_{L^2(f)} \leq \nor{p^+}_{L^2(f)}^2 + \nor{p^-}_{L^2(f)}^2, \quad \frac{1}{2}\nor{\jump{\bm{u}_n}}^2_{L^2(f)} \leq \nor{\bm{u}^+\bm{n}^+}_{L^2(f)}^2 + \nor{\bm{u}^-\bm{n}^-}_{L^2(f)}^2,
\]
and bound the sum of jumps and averages over faces by boundary values 
\[
\sum_{f\in \Gh} \frac{1}{2}\nor{\jump{p}}_{L^2(f)}^2 \leq \sum_{K\in \Oh} \nor{p}^2_{L^2(\partial K)}, \quad \sum_{f\in \Gh} \nor{\avg{p}}_{L^2(f)}^2 \leq \sum_{K\in \Oh} \nor{p}^2_{L^2(\partial K)}.  
\]
Combined with (\ref{eq:sym}) and changing to a sum over elements, we can apply trace inequalities to arrive at
\begin{align*}
\LRb{U^TA^sU} &\leq \sum_{K\in \Oh} \LRc{ \tau_{p,K}\nor{p}^2_{L^2(\partial K)} + \tau_{u,K}\nor{\bm{u}\cdot \bm{n}}^2_{L^2(\partial K)}} \\
&\leq \sum_{K\in \Oh} C_T(N,K)\LRc{\tau_{p,K}\nor{p}_{L^2(K)}^2+ \tau_{u,K}\nor{\bm{u}}_{L^2(K)}^2},
\end{align*}
where $C_T(N,K)$ is the constant in the trace inequality over the boundary $\partial K$ of the element $K$ and $\tau_{p,K}, \tau_{u,K}$ are the maximum values of the penalties over the faces of an element. 
Combining these bounds, we have
\begin{align*}
\max \LRb{\frac{U^T A^s U}{U^TMU}} &\leq \max \frac{\sum_{K\in \Oh} C_T(N,K)\LRc{\tau_{p,K}\nor{p}_{L^2(K)}^2 + \tau_{u,K}\nor{\bm{u}}_{L^2(K)}^2}}{\sum_{K\in \Omega_h}\LRc{\frac{1}{\kappa_K}\nor{p}_{L^2(K)}^2 + \rho_K\nor{\bm{u}}_{L^2(K)}^2}} \\
& \leq \max_K C_T(N,K) \max\LRp{\tau_{p,K}\kappa_K,\frac{\tau_{u,K}}{\rho_K},}
\end{align*}

The spectral radius of $M^{-1}A^k$ may be bounded by noting that skew-symmetric matrices are normal, which implies that $\rho(M^{-1}A^k) = \nor{M^{-1}A^k}$.  A characterization of this norm is given as follows: for a skew-symmetric matrix $Z \in \mathcal{R}^{n\times n}$,
\[
\nor{Z} = \max_{f,g\in \mathcal{R}^n} \frac{2g^T Z f}{\nor{f}^2 + \nor{g}^2}.
\]
This characterization has been proven in other settings (see, for example, \cite{chung2014discrepancy}).  A straightforward modification of this theorem to use the norm $\nor{f}_M = \sqrt{f^TMf}$ gives that
\begin{align*}
\nor{M^{-1}A^k} &= \max_{U,V\in \mathcal{R}^n} \frac{2V^T A^k U}{\nor{U}_M^2 + \nor{V}_M^2}
\end{align*}
where the denominator $\nor{U}^2_M + \nor{V}^2_M$ is 
\[
\nor{U}^2_M + \nor{V}^2_M = \sum_{K\in \Oh}\LRc{\frac{1}{\kappa_K}\nor{p}_{L^2(K)}^2 + \rho_K\nor{\bm{u}}_{L^2(K)}^2 + \frac{1}{\kappa_K}\nor{v}_{L^2(K)}^2 + \rho_K\nor{\bm{\tau}}_{L^2(K)}^2}
\]
From (\ref{eqn:AK}), we may compute the skew-symmetric form
\begin{align*}
V^T A^k U &= \sum_{K\in \Oh}\LRc{\int_K \bm{u}\cdot \Grad v - \int_K \Grad p \cdot \bm{\tau}} + \sum_{f\in \Gh} \int_f \LRp{-\avg{\bm{u}}\jump{v_n}  + \avg{\bm{\tau}}\jump{p_n}}
\end{align*}
where we have used the vector-valued jumps 
$\jump{p_n} = p^+\bm{n}^+ + p^-\bm{n}^-$ and  $\jump{v_n} = v^+\bm{n}^+ + v^-\bm{n}^-$.

We may bound the volume terms using Markov and Young's inequality
\begin{align*}
\sum_{K\in \Oh}\LRb{\int_K \bm{u}\cdot \Grad v - \int_K \Grad p \cdot \bm{\tau}} &\leq \sum_{K\in \Oh}\nor{\bm{u}}_{L^2(K)}\nor{\Grad v}_{L^2(K)} + \nor{\Grad p}_{L^2(K)}\nor{\bm{\tau}}_{L^2(K)}\\
&\leq \sum_{K\in \Oh}\frac{1}{2}\sqrt{C_M(N,K)} \LRp{\nor{p}_{L^2(K)}^2 + \nor{\bm{u}}_{L^2(K)}^2 + \nor{v}_{L^2(K)}^2 + \nor{\bm{\tau}}_{L^2(K)}^2}\\
&\leq \frac{1}{2} \sqrt{C_M(N,K)}\max\LRp{\kappa_K,\frac{1}{\rho_K}}\LRp{\nor{U}_M^2 + \nor{V}_M^2}
\end{align*}
where $C_M(N,K)$ is the constant in the Markov inequality for the element $K$.  

The surface terms may also be bounded using Young's inequality
\begin{align*}
&\LRb{\sum_{f\in \Gh} \int_f \LRp{-\avg{\bm{u}}\jump{v_n}  + \avg{\bm{\tau}}\jump{p_n}}} \leq \sum_{f\in \Gh} \LRp{ \nor{\avg{\bm{u}}}_{L^2(f)}\nor{\jump{v_n}}_{L^2(f)}  + \nor{\avg{\bm{\tau}}}_{L^2(f)}\nor{\jump{p_n}}_{L^2(f)}}\\
&\leq \frac{1}{2}\sum_{K\in \Oh}{
\LRp{\nor{p}^2_{L^2(\partial K)} + \nor{\bm{u}}^2_{L^2(\partial K)}
+\nor{v}^2_{L^2(\partial K)} + \nor{\bm{\tau}}^2_{L^2(\partial K)}}}\\
&\leq \frac{1}{2}\sum_{K\in \Oh}C_T(N,K){
\LRp{\nor{p}^2_{L^2(K)} + \nor{\bm{u}}^2_{L^2( K)}
+\nor{v}^2_{L^2( K)} + \nor{\bm{\tau}}^2_{L^2( K)}}}\\
&\leq \frac{1}{2}\max_K  C_T(N,K)\max\LRp{\kappa_K,\frac{1}{\rho_K}}\LRp{\nor{U}_M^2 + \nor{V}_M^2}
\end{align*}
Combining these two bounds gives 
\begin{align*}
\nor{M^{-1}A^k} &=\max_{U,V\in \mathcal{R}^n} \frac{2V^T A^k U}{\nor{U}_M^2 + \nor{V}_M^2} \leq \max_K \max\LRp{\kappa_K,\frac{1}{\rho_K}}\LRp{\sqrt{C_M(N,K)} + {C_T(N,K)}}.
\end{align*}

\subsection{Constants in trace inequalities}

An explicit expression for the trace inequality constant for a general $d$-simplex was given by Warburton and Hesthaven in \cite{warburton2003constants}, and was extended by Hillewaert in his thesis to hexahedra and wedges in \cite{hillewaert2013development}.  While the trace constant for the pyramid does not appear to be available in an explicit form, Hillewaert also proposed an empirical fit of the trace inequality constant for pyramids, which was based on a lower bound for the trace constant that could be derived in closed form.  An improved asymptotically optimal constant was given by Chan and Warburton in \cite{chan2015hp}.  The trace inequality is as follows: for $u$ in the approximation space, the norm of $u$ on a face $\widehat{f}$ of the reference element $\widehat{K}$, 
\[
\nor{u}^2_{L^2\LRp{\widehat{f}}} \leq C_f(N) 
\nor{u}^2_{L^2\LRp{\widehat{K}}}.
\]
These constants were derived assuming triangular faces with area 2 and quadrilateral faces of area 4 on bi-unit reference elements.  


The constants in the trace inequality over the surface $\partial \widehat{K}$ of a reference element may be given in terms of the constants over the different types of faces.  Assume the trace inequality is derived for face $f$.  Then, if we have $N_f$ faces of a reference element $\widehat{K}$, 
\[
\nor{u}^2_{L^2\LRp{\partial \widehat{K}}} = \sum_{j=1}^{N_f} \nor{u}^2_{L^2\LRp{f_j}} = \sum_{j=1}^{N_f} \frac{\LRb{f_j}}{\LRb{f}}\nor{u}^2_{L^2\LRp{f}} \leq \sum_{j=1}^{N_f}\frac{C_f(N)}{\LRb{f}}\LRb{f_j}\nor{u}_{\widehat{K}}^2 \leq \max \frac{C_f(N)}{\LRb{f}} \LRb{\partial \widehat{K}} \nor{u}_{\widehat{K}}^2.
\]
where we have used H{\"o}lder's inequality with $p = 1$ and $q = \infty$ and the fact that, under an affine transform (i.e. rotation), $\nor{u}_{f_j}^2 = {\LRb{f_j}}/{\LRb{f}}\nor{u}_{f}^2$.  We will refer to $C_T(N)$ as the constant in the trace inequality over the full surface $\partial K$, such that
\[
C_T(N) = \max_f \frac{C_f(N)}{\LRb{f}}\LRb{\partial \widehat{K}}
\]
We summarize these trace constants $C_f(N)$ and $C_T(N)$ in Table~\ref{table:trace}.  We remark that the constant $C_T(N)$ for tetrahedra may be equivalently derived using the trace inequality from \cite{warburton2003constants} 
\[
\nor{u}_{\partial K}^2 \leq \frac{(N+1)(N+3)}{3}\frac{\LRb{\partial K}}{\LRb{ K}}\nor{u}_K
\]
when the element $K$ is taken to be the bi-unit right tetrahedron $\widehat{K}$.

\begin{table}
\centering                                                                                   
\begin{tabular}{| c | c | c | c |}
\hline
Element type & Triangular face & Quadrilateral face & $C_T(N)$\\
\hline
Hexahedra &  & $(N+1)^2/2$ & $4(N+1)^2$\\
\hline
Wedge & $(N+1)^2/2$ & ${(N+1)(N+2)}$ & $(\sqrt{2}+3)(N+1)(N+2)$ \\
\hline 
Pyramid & $(N+1)(N+2)/2$ & $(N+1)(N+3)$ & $(\sqrt{2}+2)(N+1)(N+3)$\\
\hline 
Tetrahedra & ${(N+1)(N+3)}/{2}$ & & $(\sqrt{3}+3)(N+1)(N+3)/2$\\
\hline 
\end{tabular}
\caption{Summary of the trace constant $C_f(N)$ in the discrete trace inequalities for different faces of various reference elements, as well as the analytic trace constant $C_T(N) = \max_f(C_f(N)/\LRb{f}) \LRb{\partial \widehat{K}}$ over the surface $\partial \widehat{K}$. }
\label{table:trace}
\end{table}

The constants in Table~\ref{table:trace} have the benefit of being fully explicit in $N$.  However, since the values of $C_T(N)$ are based on face-by-face estimates (instead of considering the whole surface $\partial K$), surface trace inequalities under these constants are not tight.  Figure~\ref{fig:surface_trace} compares numerically computed trace inequality constants to the derived analytic constants $C_T(N)$ in Table~\ref{table:trace}.  For each element up to $N\leq 7$, the estimated analytic constant is a factor of 2-3 larger than the computed constant.  

\begin{figure}
\centering
\subfloat[Hexahedra]{\includegraphics[width=.24\textwidth]{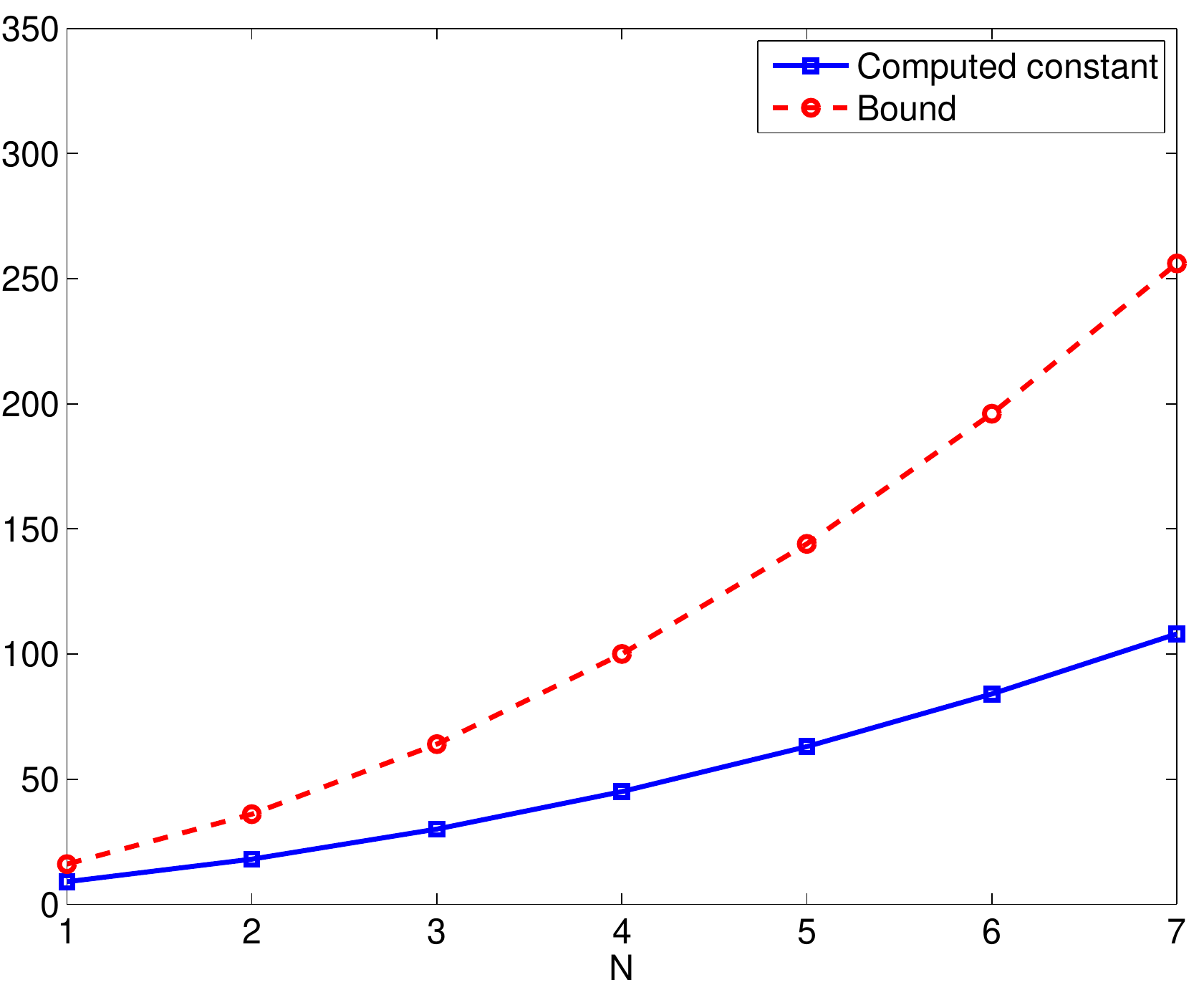}}
\subfloat[Wedge]{\includegraphics[width=.24\textwidth]{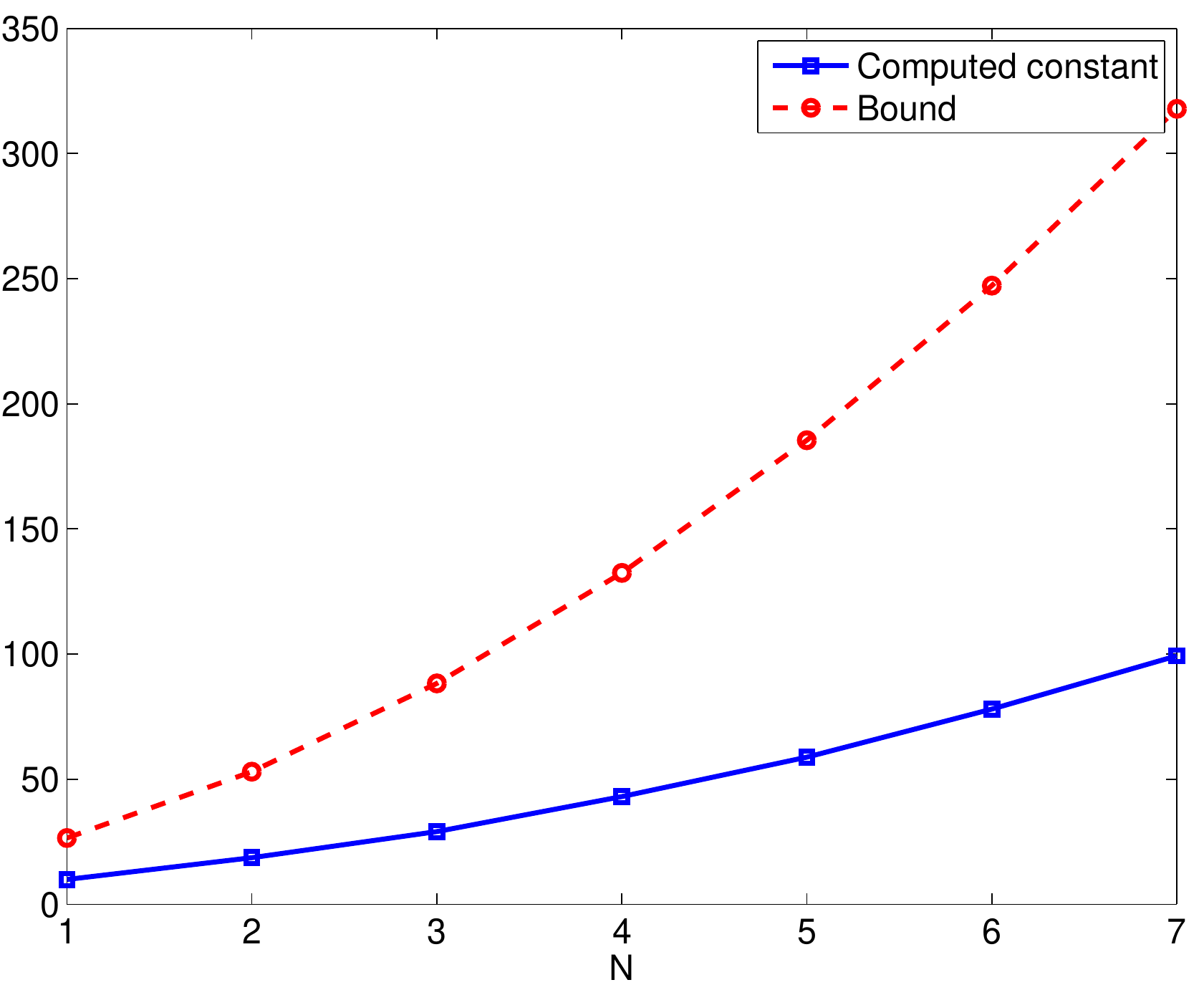}}
\subfloat[Pyramid]{\includegraphics[width=.24\textwidth]{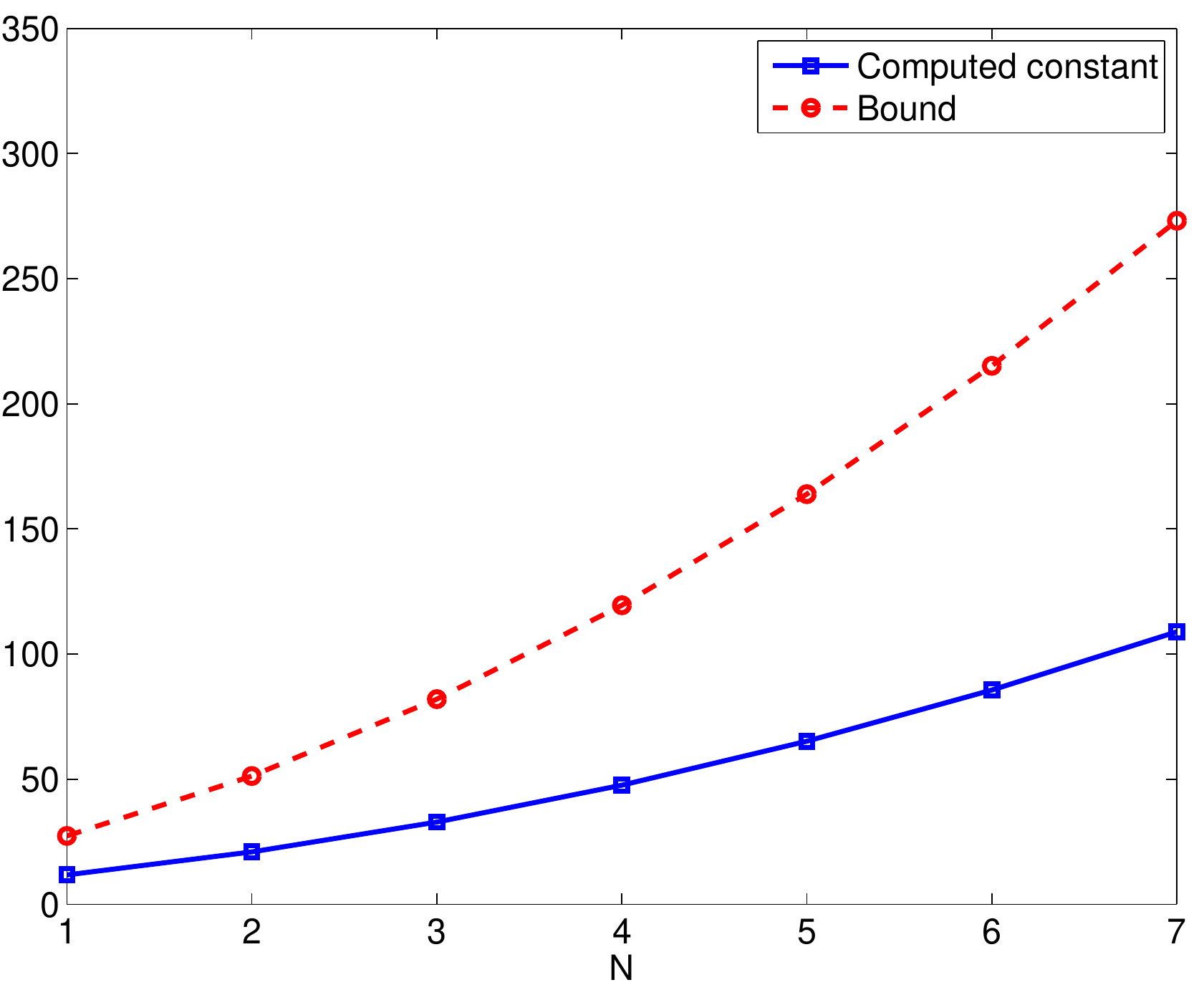}}
\subfloat[Tetrahedra]{\includegraphics[width=.24\textwidth]{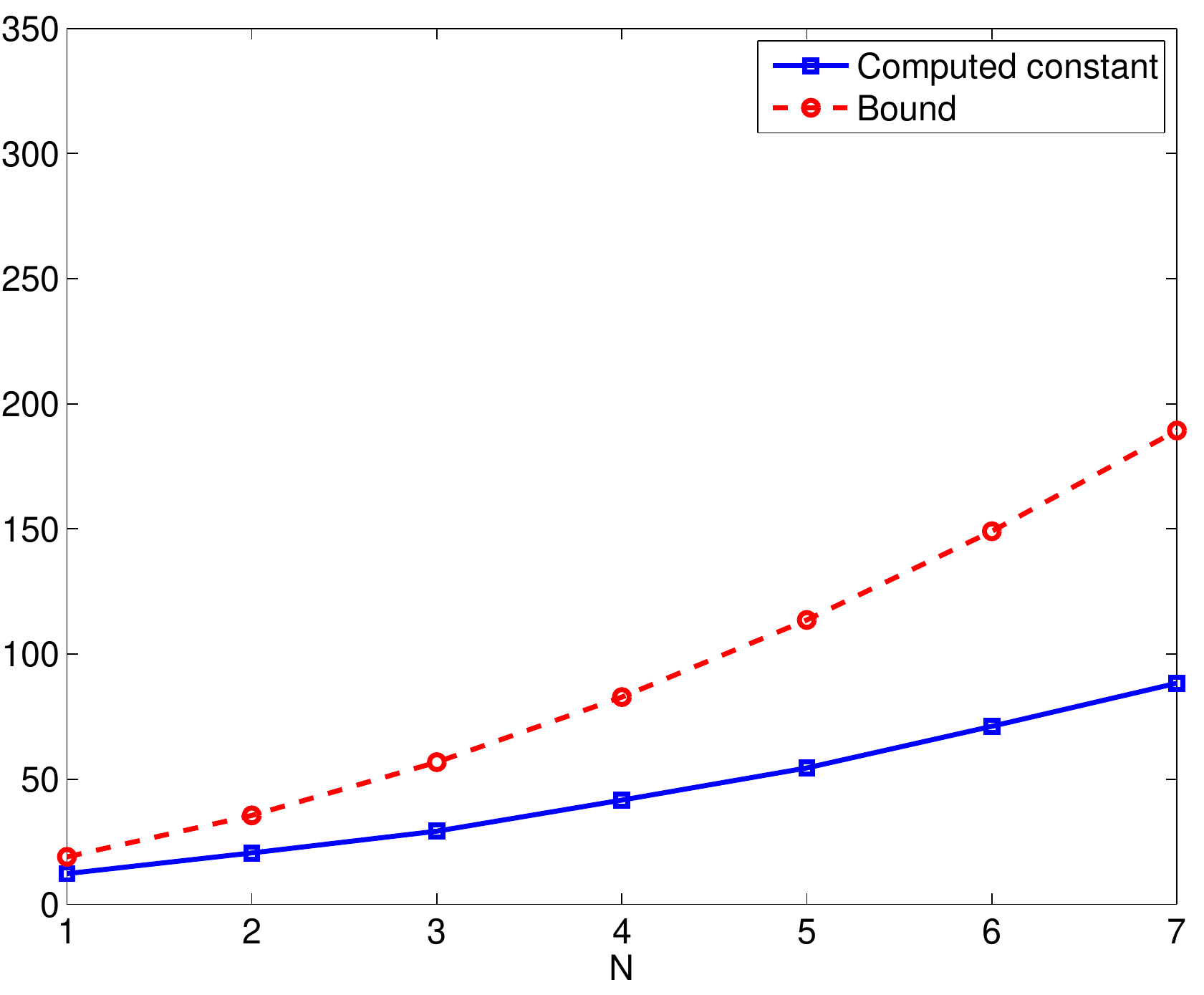}}
\caption{Comparison of face-based bound $C_T(N)$ to numerically computed trace inequality constants on each reference element.  }
\label{fig:surface_trace}
\end{figure}

For the reference wedge, pyramid, and tetrahedra, we have not found explicit expressions for numerically computed constants in surface trace inequalities.  However, for the hexahedron, we are able to derive that the constant in the bounds
\[
\nor{u}^2_{L^2\LRp{\partial \widehat{K}}} \leq C_T(N) \nor{u}^2_{L^2\LRp{\widehat{K}}}, \qquad
\nor{u}^2_{{\rm SEM}\LRp{\partial \widehat{K}}} \leq C_{\rm SEM}(N) \nor{u}^2_{{\rm SEM}\LRp{\widehat{K}}}
\]
are given exactly by 
\[
C_T(N) = \frac{3(N+1)(N+2)}{2}, \qquad C_{\rm SEM}(N) = \frac{3N(N+1)}{2}
\]
where the bound for the SEM inequality is valid for $N\geq 1$.  
These constants are a refinement of explicit bounds given by Ern and Burman \cite{burman2007continuous} and Evans and Hughes \cite{evans2013explicit}, and proofs are included in \ref{app:TPtrace}.  

Under a mapping from reference element $\widehat{K}$ to physical element $K$, we may derive a bound in terms of the determinant of the Jacobian $J$ and the determinant of the surface Jacobian $J^s$ 
\begin{align}
\nor{u}^2_{L^2\LRp{\partial K}} \leq C_T(N)\nor{J^s}_{L^\infty\LRp{\partial \widehat{K}}}\nor{J^{-1}}_{L^\infty\LRp{\widehat{K}}} \nor{u}^2_{L^2\LRp{K}}.
\label{eq:trace_bnd}
\end{align}
If the LSC-DG basis is used for the wedge, the polynomial trace inequalities used here may be replaced by trace inequalities for weighted polynomial spaces \cite{warburton2013low}, resulting in a bound of the form
\[
\nor{u}^2_{L^2\LRp{\partial K}} \leq C_T(N)\nor{\frac{J^s}{J}}_{L^\infty\LRp{\partial \widehat{K}}} \nor{u}^2_{L^2\LRp{K}}.
\]

For affine mappings of faces where ${J^s}, {J}$ are constant, the ratio of their norms reduces to the ratio of the physical surface area to reference surface area divided by the ratio of physical element volume to reference element volume
\[
{\nor{J^s}_{L^\infty\LRp{ \partial \widehat{K}}}}{\nor{J^{-1}}_{L^{\infty}\LRp{\widehat{K}}}} = \frac{\LRb{\partial K}/\LRb{\partial \widehat{K}}}{{\LRb{K}/\LRb{\widehat{K}}}}.
\]

\subsection{Constants in Markov inequalities}
Expressions for the constants in Markov inequalities over simplices are given in \cite{ozisik2010constants}; however, these bounds have not yet been extended to hexahedra, tetrahedra, and pyramids.  We take a more heuristic approach here and compute numerically the constant in Markov inequalities over the reference element.  We may bound
\[
\nor{\Grad{u}}_{L^2(K)}^2 = \sum_{k = 1}^3 \nor{\pd{u}{x_k}}_{L^2({{K}})}^2 
\leq C_{rst}^2 \nor{J}_{L^\infty(\widehat{K})}\nor{\Grad_{rst} u}^2_{L^2\LRp{\widehat{K}}}, 
\]
where 
$C_{rst} = \max_{rst,xyz}\nor{\pd{r,s,t}{x,y,z}}_{L^\infty(\widehat{K})}$ is the max norm of the Jacobian matrix for the element $K$.  The constants in the reference Markov inequality
\[
\nor{\Grad_{rst} u}^2_{L^2\LRp{\widehat{K}}} \leq C_M(N) \nor{u}_{L^2(\widehat{K})}^2
\]
may then be computed numerically over the reference element through the solution of an eigenvalue problem.  The bound is completed by bounding reference quantities by physical quantities using ${J^{-1}}$
\[
\nor{\Grad{u}}_{L^2(K)}^2 \leq C_M(N) C_{rst}^2  \nor{J}_{L^\infty(\widehat{K})} \nor{J^{-1}}_{L^\infty(\widehat{K})}\nor{u}_{L^2(K)}^2.
\]

\subsection{$N$-dependent bounds for $\rho(M^{-1}A)$}

To summarize, the trace constant over a mapped element $K$ is given by 
\[
C_T(N,K) = C_T(N)\nor{J^s}_{L^\infty\LRp{\partial \widehat{K}}}\nor{J^{-1}}_{L^\infty\LRp{\widehat{K}}},
\]
while the Markov constant ${C_M(N,K)}$ is given by 
\[
C_M(N,K) = C_M(N)C_{rst}^2 {\nor{J}_{L^\infty\LRp{\widehat{K}}}\nor{J^{-1}}_{L^\infty\LRp{\widehat{K}}}}.
\]
The real part of the spectra of $M^{-1}A$ is bounded by the trace inequality constant and physical parameters
\begin{align}
\LRb{{\rm Re}(\lambda)} \leq \max_K \max\LRp{\tau_{p,K}\kappa_K,\frac{\tau_{u,K}}{\rho_K}} C_T(N)\nor{J^s}_{L^\infty\LRp{\partial \widehat{K}}}\nor{J^{-1}}_{L^\infty\LRp{\widehat{K}}}.
\label{eq:bnd_re}
\end{align}
Likewise, the imaginary part of the spectra of $M^{-1}A$ depends on the quantity
\begin{align}
\LRb{{\rm Im}(\lambda)} \leq \max_K \max \LRp{\kappa_K,\frac{1}{\rho_K}} \LRp{\sqrt{C_M(N,K)} + {C_T(N,K)}}.
\label{eq:bnd_im}
\end{align}
The latter term expands to
\[
{C_{rst}\sqrt{C_M(N) \nor{J}_{L^\infty\LRp{\widehat{K}}}\nor{J^{-1}}_{L^\infty\LRp{\widehat{K}}}} + {C_T(N)} \nor{J^s}_{L^\infty\LRp{\partial \widehat{K}}}\nor{J^{-1}}_{L^\infty\LRp{\widehat{K}}}}.
\]

We analyze the bounds derived above for the acoustic wave equation with $\kappa = \rho = 1$ and compare them to numerically computed spectra of $M^{-1}A$.  Figure~\ref{fig:spectra} shows the computed spectra of $M^{-1}A$ for a hybrid mesh using the GL formulation.  Bounds on the real and imaginary parts of the spectra derived by computing $\rho(M^{-1}A^s),\rho(M^{-1}A^k)$ are also included as dotted lines.  

\begin{figure}[!h]
\centering
\subfloat[Hybrid mesh]{\includegraphics[width=.4\textwidth]{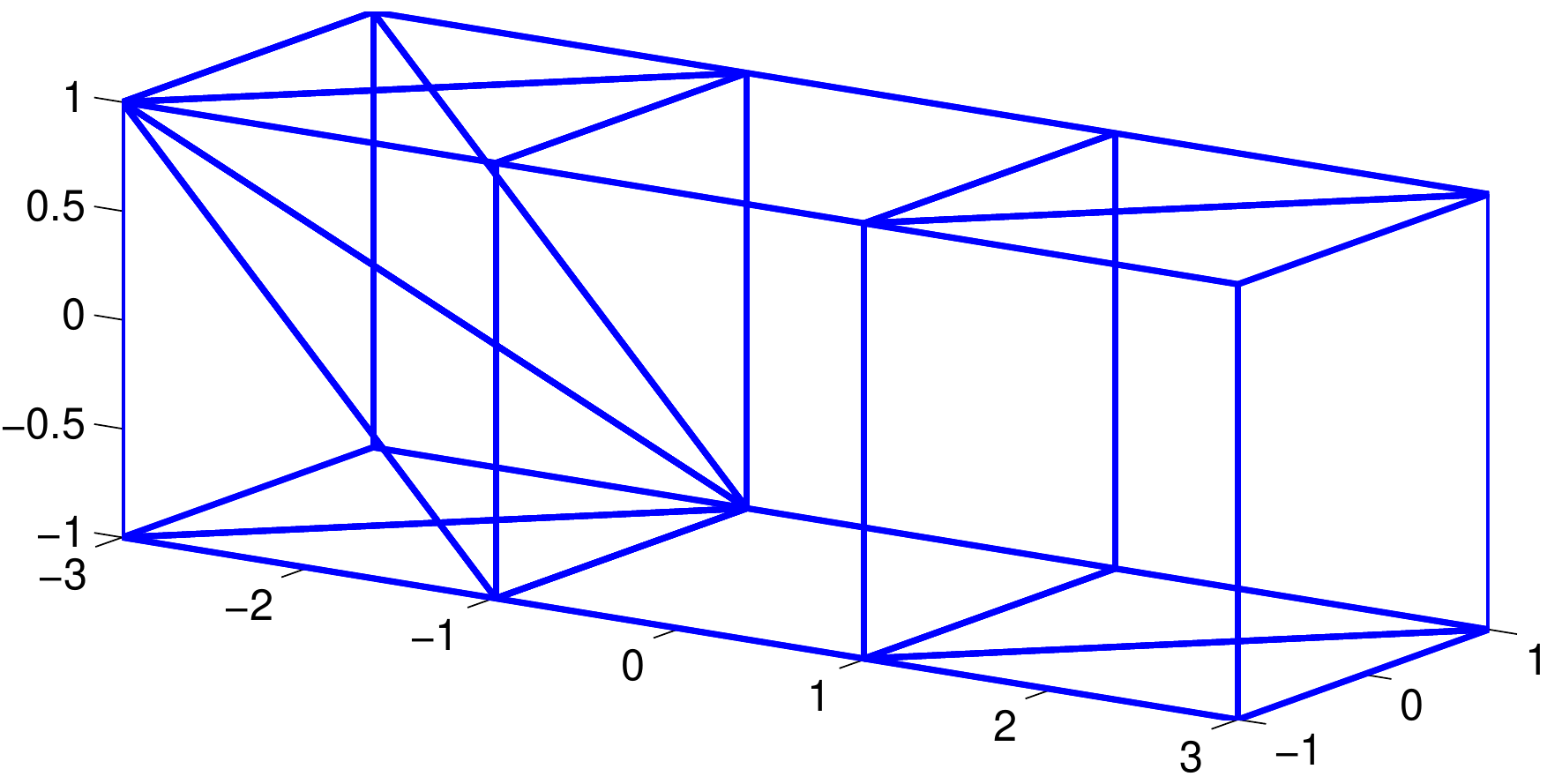}}
\hspace{3em}
\subfloat[Spectra and bounds]{\includegraphics[width=.3\textwidth]{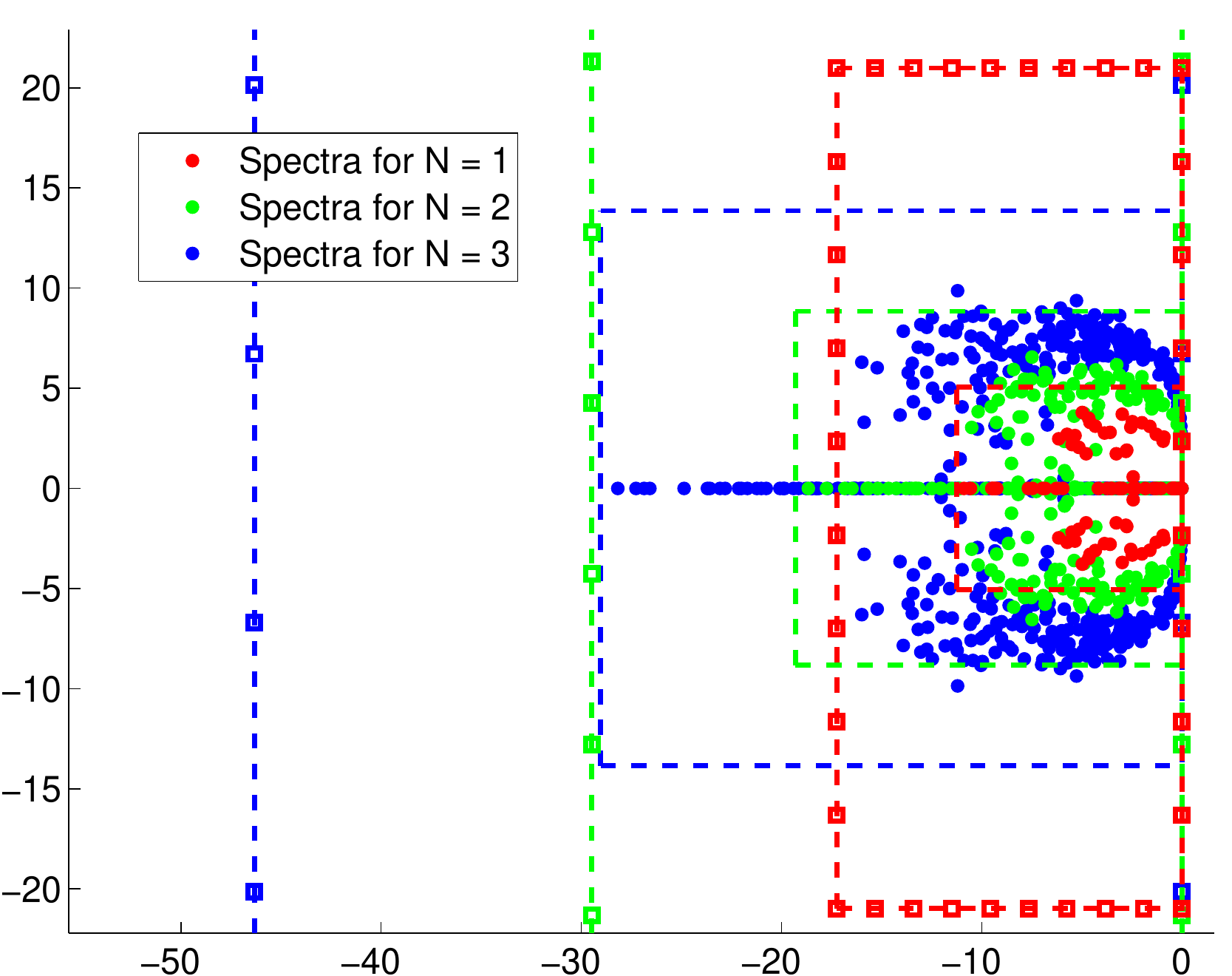}}
\caption{Spectra for a hybrid mesh using Gauss-Legendre quadrature, along with bounds (dotted lines) computed from $\rho(A^s)$ and $\rho(A^k)$, and analytic bounds (squared lines).  The bounds for $N=2, 3$ are truncated for visualization purposes.  }
\label{fig:spectra}
\end{figure} 

We note that our estimates for $\rho(M^{-1}A^k)$ provide a loose bound on the imaginary part of the spectra.  However, we observe that the spectral radius of the symmetric part $\rho(M^{-1}A^s)$ provides a relatively tight bound on $\rho(M^{-1}A)$.  Motivated by this observation, we compare $\rho(M^{-1}A)$ to four different bounds based on $\rho(M^{-1}A^s)$:
\begin{enumerate}
\item The true spectral radius of $M^{-1}A^s$.  
\item The bound (\ref{eq:bnd_re}), where the trace inequality constant $C_T(N,K)$ is computed numerically as the maximum trace constant over each mapped element $K$.  
\item The bound (\ref{eq:bnd_re}), where the trace inequality constant $C_T(N)$ is computed numerically over the reference element $\widehat{K}$ (\ref{app:MTconsts}).  
\item The bound (\ref{eq:bnd_re}), where the trace inequality constant $C_T(N)$ is given by the estimate (\ref{eq:trace_bnd}).  
\end{enumerate}
In the latter two bounds, the $L^\infty$ norm of $J, J^{-1}$, and $J^s$ is approximated by taking the maximum value over quadrature points.  
Table~\ref{table:bounds} shows each bound compared to $\rho(M^{-1}A)$ for various $N$.  The bound on $\rho(M^{-1}A)$ using computed trace inequality constants over the reference element is less than a factor of 2 away from the true spectral radius, and we use this to estimate the largest stable timestep for a given mesh and discretization.  The bound on $\rho(M^{-1}A)$ using analytically derived trace constants over faces provides a looser bound (factor of 4-5 away from the true spectral radius), but has the advantage of being fully explicit in $N$.  

Bounds for the spectral radius under the SEM formulation may be derived by substituting in trace inequality constants using SEM quadrature over quadrilateral faces.  Computed trace inequality constants $C_T(N)$ for both the GL and SEM formulations are given in \ref{app:MTconsts}.  Analytic expressions for constants in trace inequalities may also be derived using norm equivalences between SEM and $L^2$ norms for polynomials of order $N$.  

Numerical experiments confirm that ${\rho\LRp{M^{-1}A}}$ is bounded by the maximum trace inequality constant $\max_K C_T(N,K)$.  Since a stable timestep is given by $d\tau \leq 1/{\rho\LRp{M^{-1}A}}$, we estimate the global timestep by 
\begin{align*}
d\tau = \frac{C}{\max_K \LRp{C^K_{\rho,\kappa}C_T(N,K)}} = \frac{C}{\max_K \LRp{C^K_{\rho,\kappa}C_T(N)C_J^K}},
\end{align*}
where $C^K_{\rho,\kappa} = \max\LRp{\tau_{p,K}\kappa_K,\frac{\tau_{u,K}}{\rho_K}}$, $C$ is a tunable global CFL constant, and $C_J^K$ is 
\begin{align*}
C_J^K &= \nor{J^s}_{L^\infty\LRp{\partial \widehat{K}}}\nor{J^{-1}}_{L^\infty\LRp{\widehat{K}}} \quad \text{if the element is a hex, pyramid, or tet, or}\\
C_J^K &= \nor{\frac{J^s}{J}}_{L^\infty\LRp{\partial \widehat{K}}} \quad \text{if the element is an LSC-DG wedge}.
\end{align*}

Since we have estimated the global timestep by taking the maximum trace constant over all elements, we may also use local trace constants to estimate  stable local timesteps $d\tau_K$ using the constants in local trace inequalities
\begin{align}
d\tau_K = \frac{C}{C^K_{\rho,\kappa}C_T(N)C_J^K}.
\label{eq:dtK}
\end{align}

\begin{table}
\centering
\begin{tabular}{|c|c|c|c|c|}          
\hline
N & 1 & 2 & 3 & 4\\
\hline                                
$\rho(M^{-1}A)$ & 10.89 & 18.66 & 28.18 & 41.09 \\  
\hline                            
${\rho(M^{-1}A^s)}$ & 11.26 & 19.30 & 29.05 & 42.86 \\  
\hline                            
Computed $C_T(N,K)$ & 12.22 & 20.84 & 32.76 & 47.47 \\  
\hline                            
Computed $C_T(N)$ & 17.24 & 29.47 & 46.32 & 67.13 \\  
\hline                            
Analytic $C_T(N)$ & 38.53 & 74.73 & 124.54 & 186.81 \\
\hline          
\end{tabular}                         
\caption{Spectral radius of $\rho(M^{-1}A)$ and various bounds for a hybrid mesh and $N = 1, \ldots, 4$.}
\label{table:bounds}
\end{table}

\section{Numerical experiments}
\label{sec:numerics}

In this section, we present results from \verb+hybridg+ for meshes of individual element types and hybrid meshes at various orders of approximation.  

\subsection{Verification: individual element types}
\label{sec:elemverify}

We begin by comparing the SEM and GL formulations for a series of uniformly refined meshes.  Hex meshes are constructed by subdividing the unit cube into hexahedra of size $h$, and we construct wedge and pyramid meshes by further subdividing each hexahedra into either 2 wedges or 6 pyramids, as shown in Figure~\ref{fig:hex_sub}.  
\begin{figure}
\centering
\subfloat[Wedge]{\includegraphics[width=.2\textwidth]{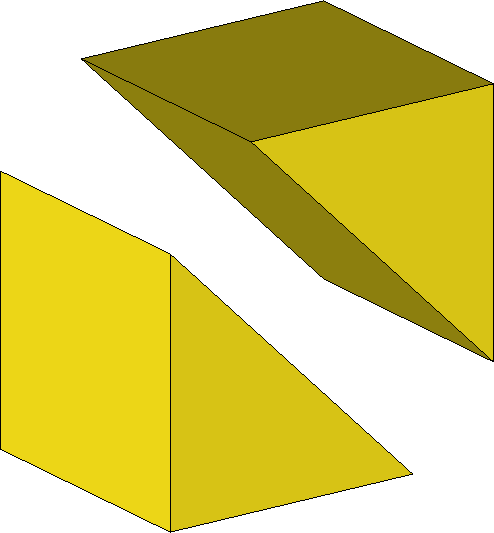}}
\hspace{8em}
\subfloat[Pyramid]{\includegraphics[width=.2\textwidth]{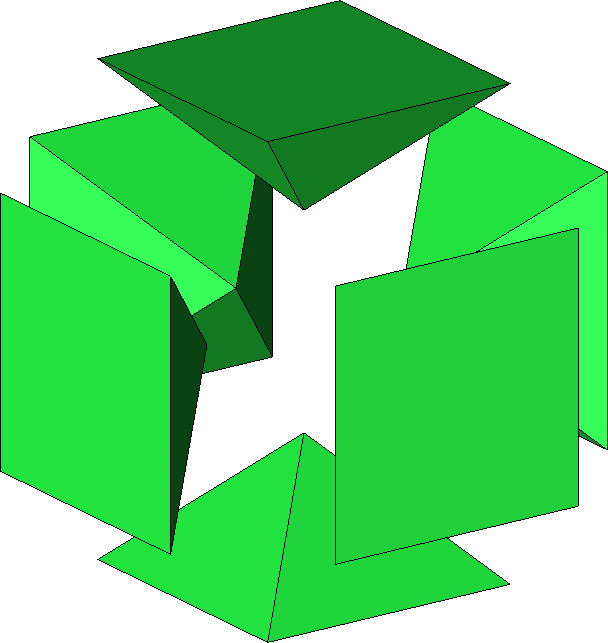}}
\caption{Subdivisions of hexahedra used to construct uniform meshes of wedges and pyramids.}
\label{fig:hex_sub}
\end{figure}
Figure~\ref{fig:elem_cube_rates} shows numerical convergence rates obtained using the resonant cavity solution 
\[
p(x,y,z,\tau) = \sin(\pi x) \sin(\pi y) \sin(\pi z) \cos(\sqrt{3}\pi \tau)
\]
over the unit cube $[0,1]^3$.  Since the solution is smooth, the best approximation in $L^2$ converges with a rate of $h^{N+1}$, though DG is guaranteed only a convergence rate of $h^{N+1/2}$ for general meshes.  We observe optimal $h^{N+1}$ convergence rates for the GL formulation, while the SEM formulation yields computed convergence rates somewhere in between $h^{N+1/2}$ and $h^{N+1}$ (though for the hex, these rates are less informative since SEM errors do not follow a constant rate as closely).  For all orders and meshes, the error for the GL formulation is lower than that of the SEM formulation, though this difference is less pronounced at higher $N$.  

\begin{figure}
\centering
\subfloat[Hex]{\includegraphics[width=.325\textwidth]{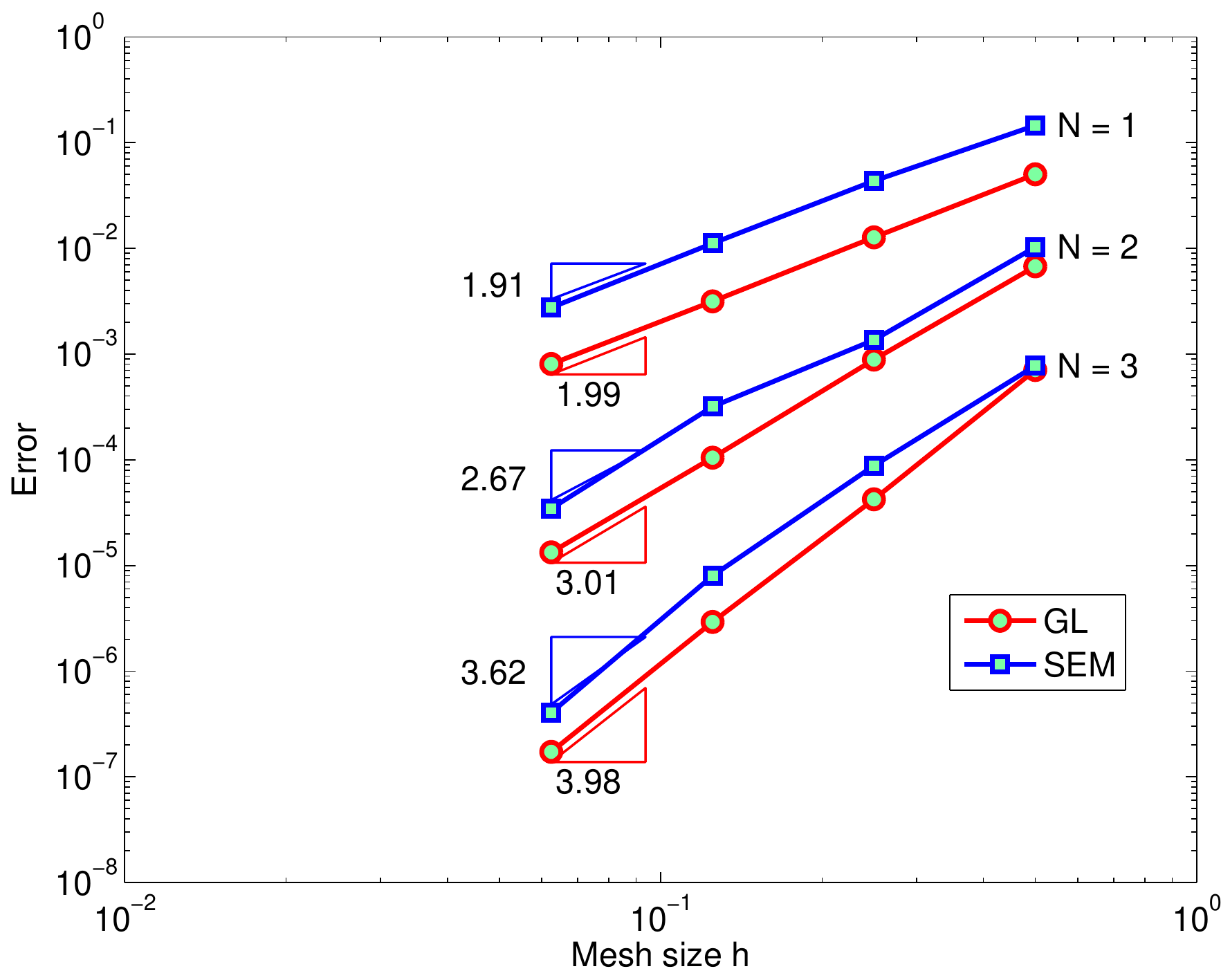}}
\subfloat[Wedge]{\includegraphics[width=.325\textwidth]{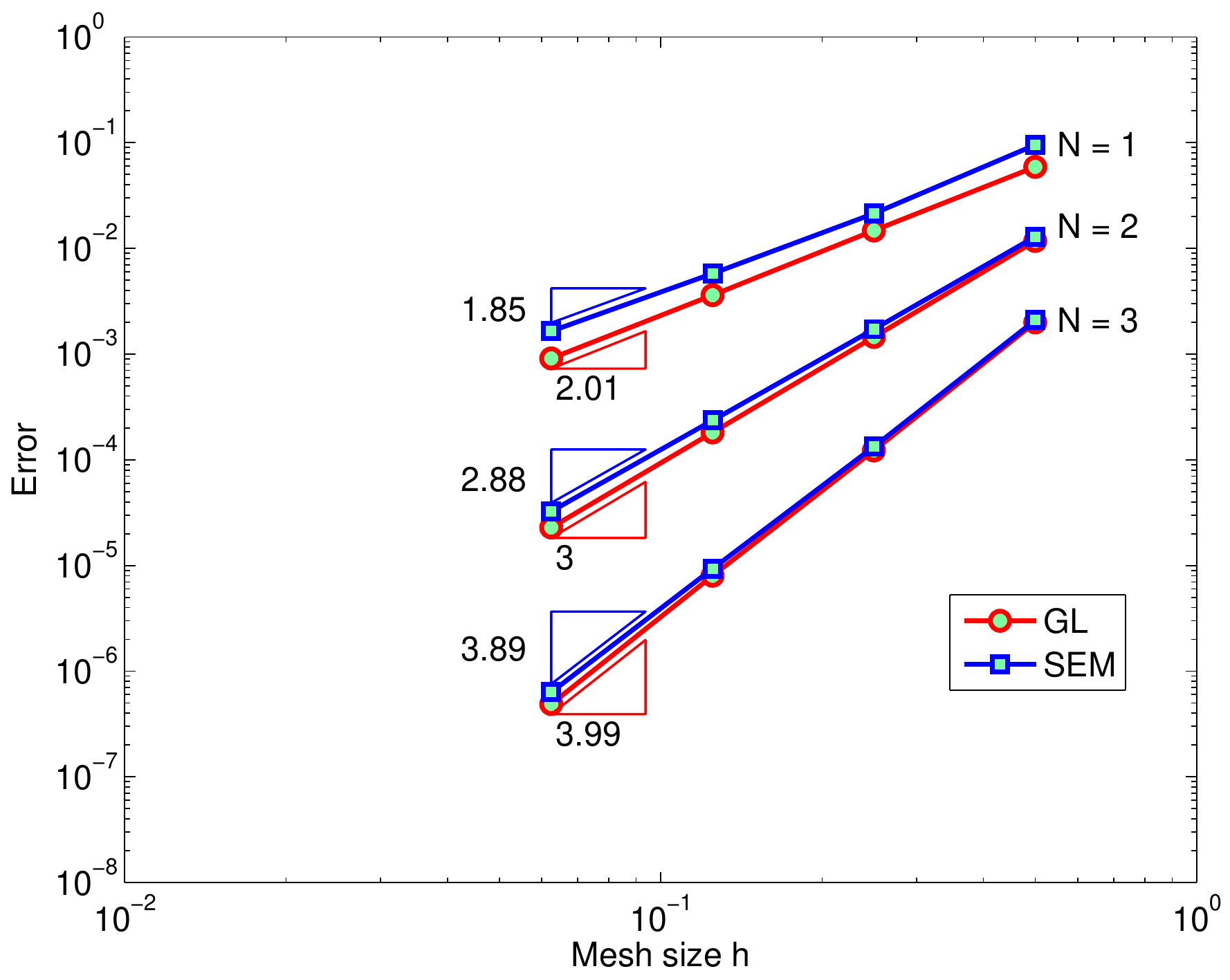}}
\subfloat[Pyramid]{\includegraphics[width=.325\textwidth]{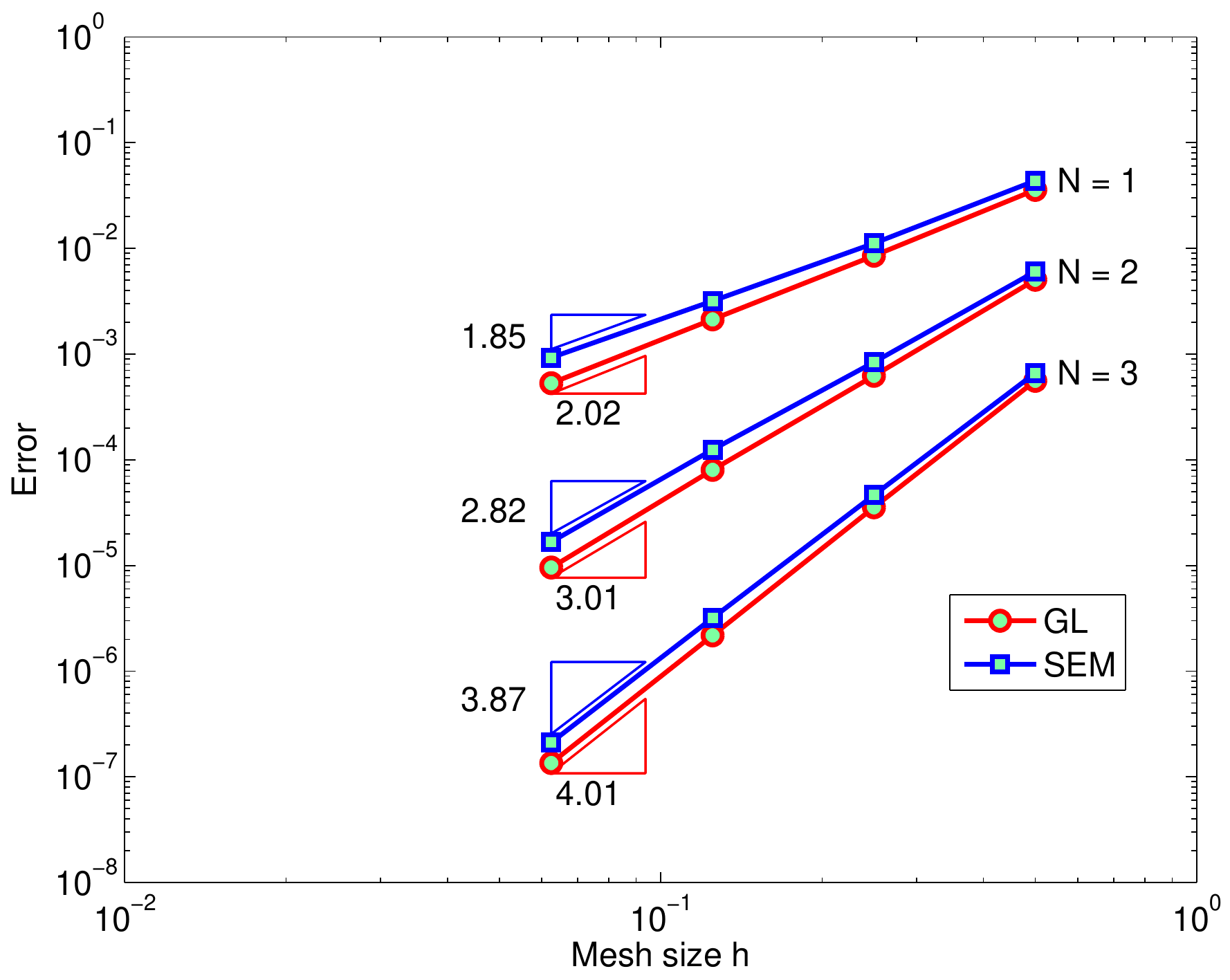}}
\caption{Convergence rates for both the SEM and GL formulations for meshes of hexes, wedges, and pyramids. }
\label{fig:elem_cube_rates}
\end{figure}

We also compare standard wedges to LSC-DG wedges in Figure~\ref{fig:lsc_convergence}  by performing convergence tests on two sequences of meshes.  For one sequence, we refine a uniform mesh of affine wedges with the GL formulation (referred to as ``GL Wedge'').  For the other, we begin with a mesh of randomly warped wedges, and refine the mesh by bisection to produce a sequence of meshes (referred to as ``GL LSC wedge'').   Optimal rates of convergence are observed for both sequences (for the SEM formulation, similar behavior to standard SEM wedges is also observed when using LSC SEM wedges).  We note that the observed optimal convergence rates for LSC-DG are likely dependent on the fact that refinement by bisection produces asymptotically affine elements.  If we perturb vertex positions at each level of mesh refinement, we do not in general observe optimal convergence rates.  This issue is observed also for standard mapped approximation spaces \cite{botti2012influence}, though it is likely exacerbated by the LSC-DG approximation.

\begin{figure}
\centering
\subfloat{\includegraphics[width=.3\textwidth]{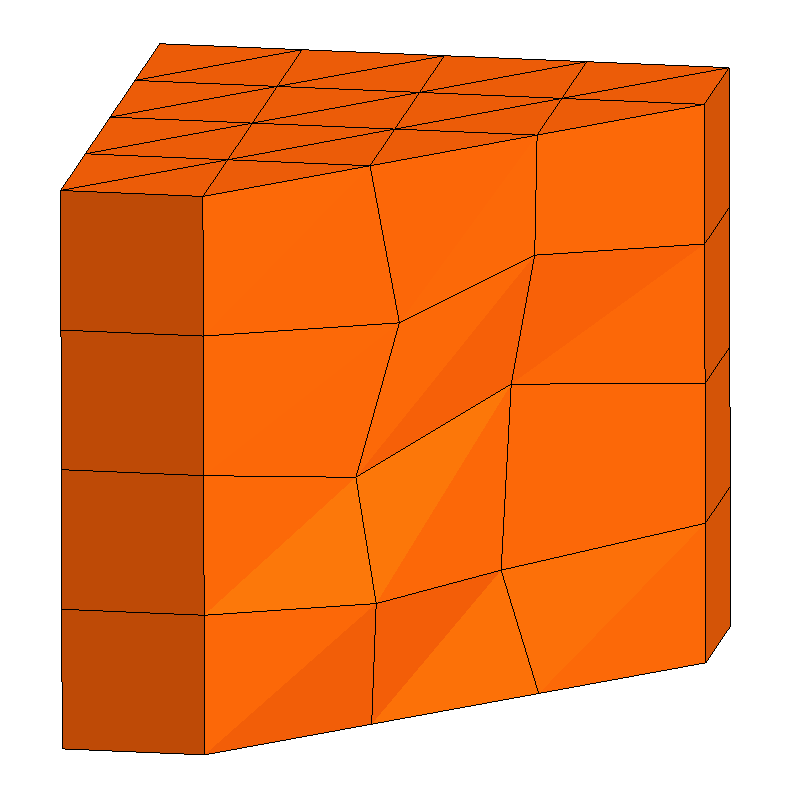}}
\hspace{1em}
\subfloat{\includegraphics[width=.375\textwidth]{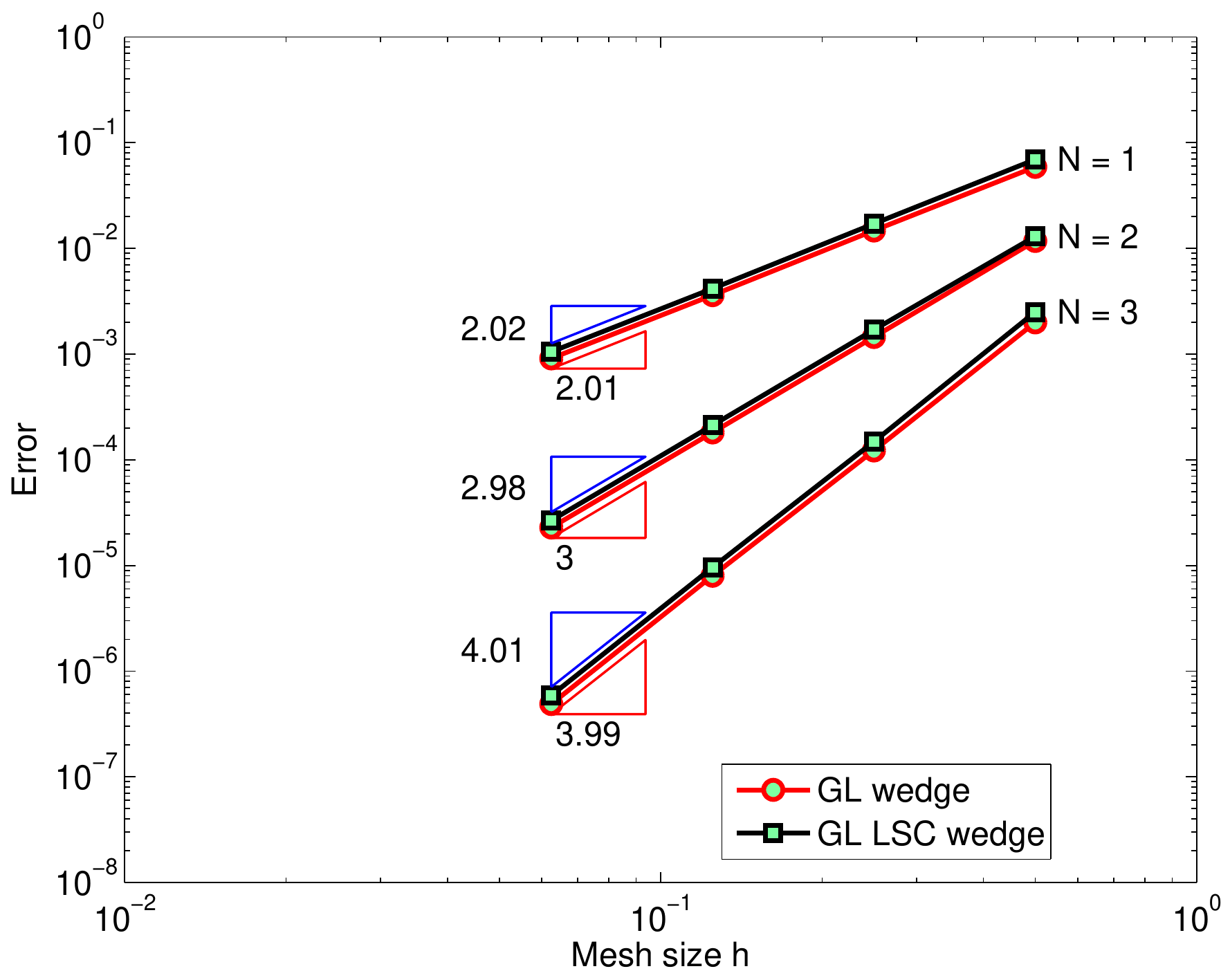}}
\caption{Convergence of LSC-DG (right) on a mesh of warped wedges (left).  The vertex positions are perturbed by $10\%$, and the mesh is then refined by splitting.  The convergence of LSC-DG is plotted using a black line, while red lines compare the convergence of DG on an un-warped mesh of wedges.}
\label{fig:lsc_convergence}
\end{figure}

\subsection{Verification: hybrid cube meshes}

We check also numerical convergence rates for unstructured hybrid meshes.  
$L^2$ errors are computed on a sequence of hybrid meshes containing elements of all types 
.  Each element is refined by a self-similar splitting, except for pyramids, which are refined into both pyramids and tetrahedra.\footnote{Each element is refined according to the pattern specified by the ``refine by splitting'' option in GMSH \cite{geuzaine2009gmsh}.}  These meshes contain 201, 1704, 14016, and 113664 elements, respectively.  The breakdown of the number of each individual elements of each type is given in Table~\ref{table:hybrid_elems}.  5 MRAB levels are used, with a CFL constant of $0.5$.  Results are computed using double precision; single precision behaves similarly, though the error stalls at around $5\times 10^{-7}$.  

\begin{table}
\centering                                                                                   
\begin{tabular}{|c|c|c|c|c|c|}
\hline                   
& Hexahedra & Wedge & Pyramids & Tetrahedra & Total elements\\
\hline
Mesh 1 & 84 & 10 & 24 & 83 & 201\\
\hline
Mesh 2 & 672 & 80 & 96 & 856 & 1704\\
\hline
Mesh 3 & 5376 & 640 & 384 & 7616 & 14016\\
\hline
Mesh 4 & 43008 & 5120 & 1536 & 64000 & 113664\\
\hline
\end{tabular}
\caption{Number of elements of each type for each hybrid mesh used in convergence tests. }
\label{table:hybrid_elems}
\end{table}


Under the assumption that the mesh size $h$ is halved at each time, we estimate the asymptotic convergence rate in Figure~\ref{fig:cubeMeshExplode} for $N=1,2,3$.  For both the SEM and GL formulations, we observe orders of convergence (in between $h^{N+1/2}$ and $h^{N+1}$) similar to those reported for single element-type meshes in Section~\ref{sec:elemverify}.  

\begin{figure}
\centering
\subfloat{\includegraphics[width=.3\textwidth]{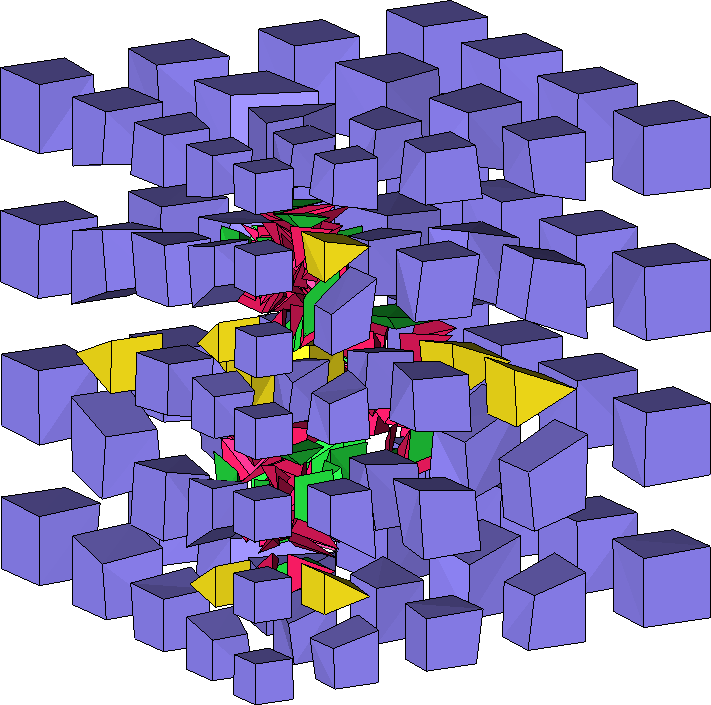}}
\hspace{1em}
\subfloat{\includegraphics[width=.375\textwidth]{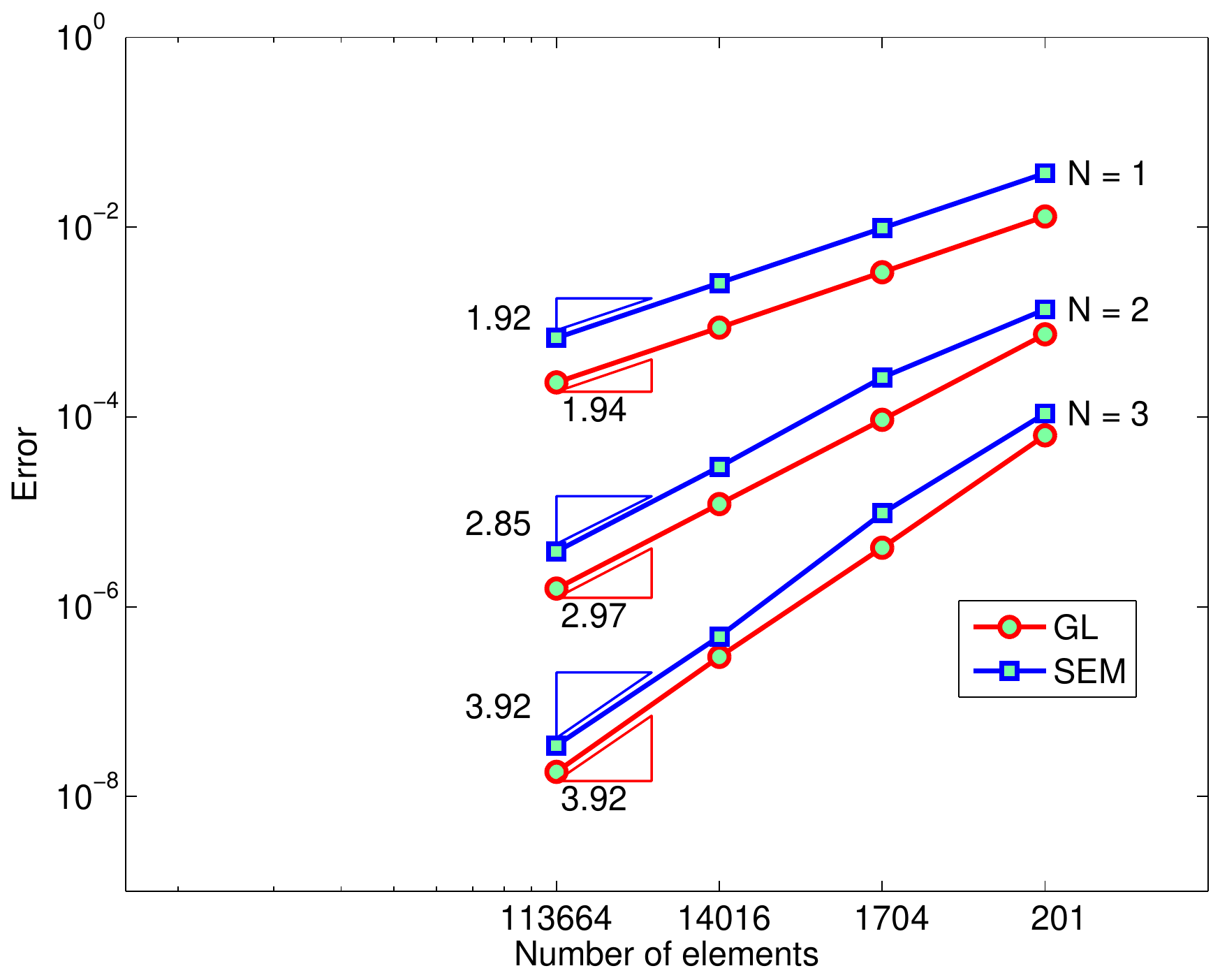}}
\caption{An exploded view of the hybrid mesh with 201 elements (left) and $L^2$ errors for both the SEM and GL formulation (right).}
\label{fig:cubeMeshExplode}
\end{figure} 

%

\subsection{Computational results}
\label{sec:comp}

In Section~\ref{sec:semvsgl} and \ref{sec:perelemcost}, we quantify the computational cost of DG solvers on hybrid meshes in terms of runtime per degree of freedom.  While the approximation power per degree of freedom varies from element to element, the cost per dof gives a rough estimate of the efficiency of each kernel.  In Section~\ref{sec:gflopsbw}, we quantify the computational efficiency of the solver in terms of estimated bandwidth and GFLOPS for each element kernel.  All computations were run on a single Nvidia GTX 980 GPU in single precision.  

\subsubsection{Cost of SEM vs GL formulations}
\label{sec:semvsgl}
Since the computational structure of the volume kernel is identical between the SEM and GL formulations, any additional cost associated with the GL formulation lies in the surface and update kernels.  We implemented two separate surface and update kernels and report the relative per-dof speedup of SEM compared to GL for orders $N=1,\ldots,5$ in Table~\ref{table:SEMvsGL}.  For all orders, the cost of the SEM update is lower or equal to the cost of the GL update.  For low orders, the SEM-tailored surface kernel performs slightly worse than that of the GL kernel\footnote{The slower runtime may be due to additional conditional statements and the structure of memory accesses in the SEM-tailored kernel.  We note that it is always possible to run the SEM formulation using GL kernels, and thus the cost of the SEM surface kernel can always be made in practice to be less than or equal to the cost of the GL kernel}.  However, as the order $N$ increases, the SEM kernel becomes cheaper due to the fact that data from interior nodes does not need to be accessed.  When comparing total runtime at $N>1$, the SEM formulation becomes roughly $5-10\%$ cheaper per-dof than the GL formulation, similar to the $10-15\%$ cost savings using SEM over GL as reported by Kopriva and Gassner on CPU architectures \cite{kopriva2010quadrature}.  

\begin{table}[!h]
\centering
\begin{tabular}{|c|c|c|c|c|c|}       
\hline                             
$N$ & 1 & 2 & 3 & 4 & 5 \\               
\hline                                
Ratio of surface cost of SEM vs GL & 1.1243 &    1.0597   &  0.9350   &  0.8986   &  0.8329\\
\hline
Ratio of update cost of SEM vs GL  &    0.9884 &   0.7437 &   1.0000  &  0.8408 &   0.9000\\
\hline
Ratio of total cost of SEM vs GL &  1.0432 &    0.9286&     0.9798&     0.9163&     0.9134\\
\hline
\end{tabular}
\caption{Total time-per-dof cost for SEM vs GL hexahedra.}
\label{table:SEMvsGL}
\end{table} 

\subsubsection{Cost per element type}
\label{sec:perelemcost}
In this section, we compare the computational cost of hexahedra, wedges, and pyramids relative to the computational cost of tetrahedra.  The results reported are for tuned computational kernels, where the number of elements processed per workgroup has been chosen in order to minimize the runtimes of the volume, surface, and update kernels for each element \cite{modave2015accelerated, gandham2014gpu}.  As suggested in \cite{klockner2009nodal}, automation of this process is crucial for portable performance across various architectures, especially for hybrid meshes where parameters must be tuned for 12 separate kernels.  

\begin{table}
\centering
\begin{tabular}{|c|ccccc|} 
\hline 
$N$ & 1 & 2 & 3 & 4 & 5 \\ 
\hline 
Hex SEM kernels & & & & & \\ 
\hline 
Volume & 1.2238 & 1.0373 & 1.1147 & 0.9538 & 0.7772\\
Surface & 0.9631 & 0.8735 & 0.5666 & 0.6038 & 0.5808\\
Update & 1.0254 & 0.8427 & 0.9161 & 0.8762 & 0.6667\\
Total (all kernels) & 1.0492 & 0.9108 & 0.8222 & 0.7987 & 0.6738\\
\hline 
Hex GL kernels & & & & & \\ 
\hline 
Volume & 1.2238 & 1.0373 & 1.1147 & 0.9538 & 0.7772\\
Surface & 0.8567 & 0.8243 & 0.6060 & 0.6719 & 0.6974\\
Update & 1.0374 & 1.1332 & 0.9161 & 1.0422 & 0.7407\\
Total (all kernels) & 1.0058 & 0.9808 & 0.8392 & 0.8717 & 0.7377 \\
\hline 
Wedge kernels & & & & & \\ 
\hline 
Volume & 2.965 & 1.800& 2.999& 2.596& 2.975\\
Surface & 0.912& 0.778& 0.99& 2.086& 1.566\\
Update & 1.414 & 1.02& 1.111& 1.321& 1.485\\
Total (all kernels) & 1.5937 & 1.1450 & 1.5972 & 2.0407 & 2.0242\\
\hline 
Pyramid kernels & & & & & \\ 
\hline 
Volume & 1.912& 1.653& 2.418& 3.228& 2.842\\
Surface & 0.959 &0.855 &0.634 &1.657 &1.157\\
Update & 1.51 &1.033& 1.112& 1.169& 1.804\\
Total (all kernels) & 1.3705 & 1.1386 & 1.2788 & 2.0470 & 1.9280 \\
\hline 
\end{tabular}
\caption{Time-per-dof cost (relative to tetrahedra) of hexahedra, wedge and pyramid kernels.}
\label{table:perDofTimes}  
\end{table}
Table~\ref{table:perDofTimes} shows the time-per-dof cost for $K\approx 100000$ hexahedral, wedge, or pyramidal elements relative to the time-per-dof cost of $K\approx 100000$ tetrahedral elements.  While hexahedra are observed to be faster per-dof than tetrahedra at higher orders, wedge and pyramidal volume kernels are observed to be 1-2 times slower per-dof than tetrahedra kernels.  This may be due in part to the fact that, for planar tetrahedra, geometric factors are constant over an element.  As a result, only a few values must be loaded per element independently of $N$.  This is in contrast to hexahedra, wedges and pyramids, for which the number of geometric factors to be loaded increases along with the total number of nodal or cubature points.  


We note that these numbers give optimistic estimates on per-dof costs for two reasons.  First of all, for meshes containing only planar tetrahedra, the cost of the surface and update kernels may be further reduced by replacing surface quadratures with lift operators for nodal bases \cite{hesthaven2007nodal, klockner2009nodal}.  Secondly, on hex-dominant meshes, the number of wedges and pyramids is expected to be much smaller than the number of hexahedra and tetrahedra.  The runtimes reported previously are for asymptotically ``large'' numbers of elements, and may increase for a small number of elements.

\begin{figure}
\centering
\subfloat[Hexahedron]{\includegraphics[width=.245\textwidth]{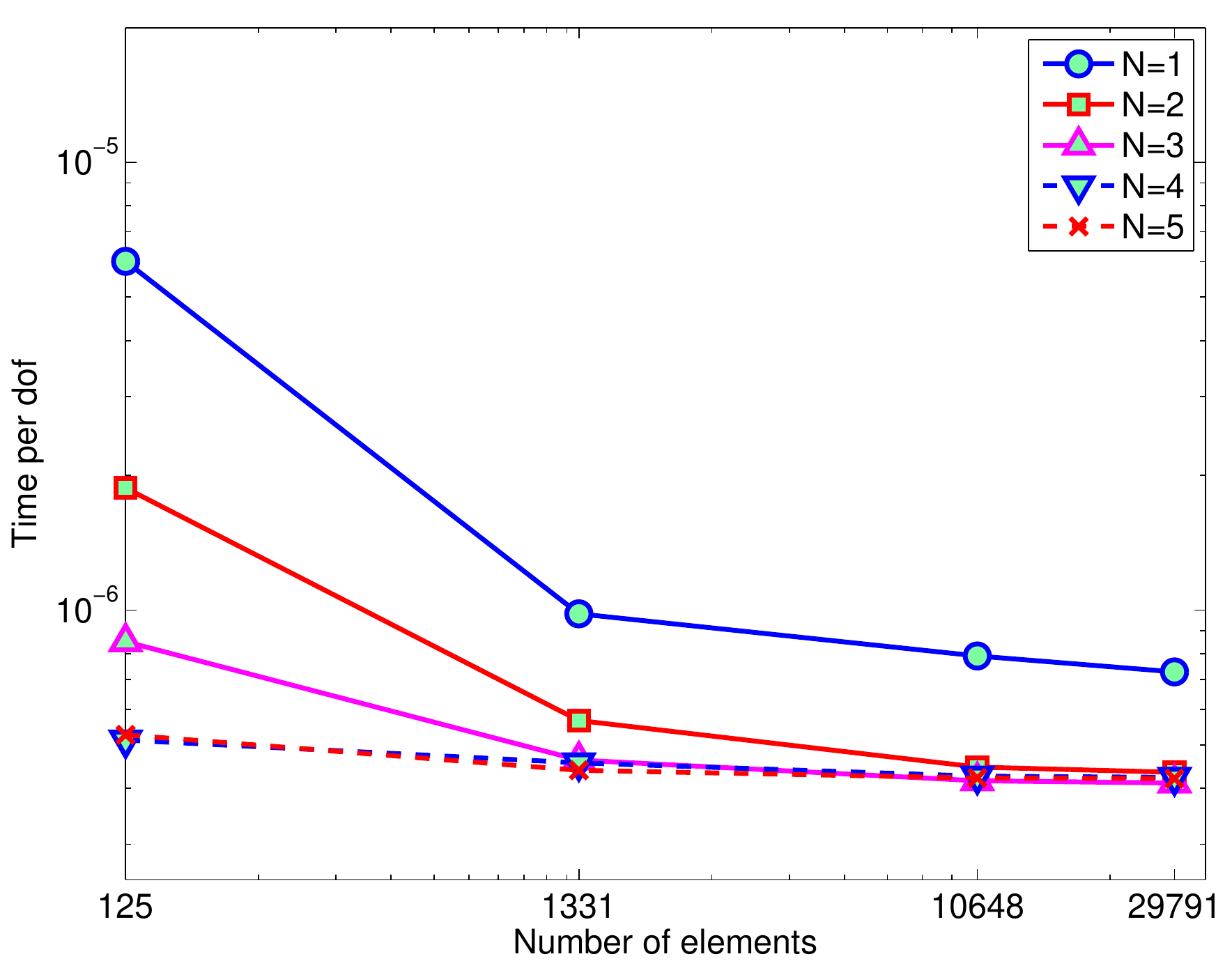}}
\subfloat[Wedge]{\includegraphics[width=.245\textwidth]{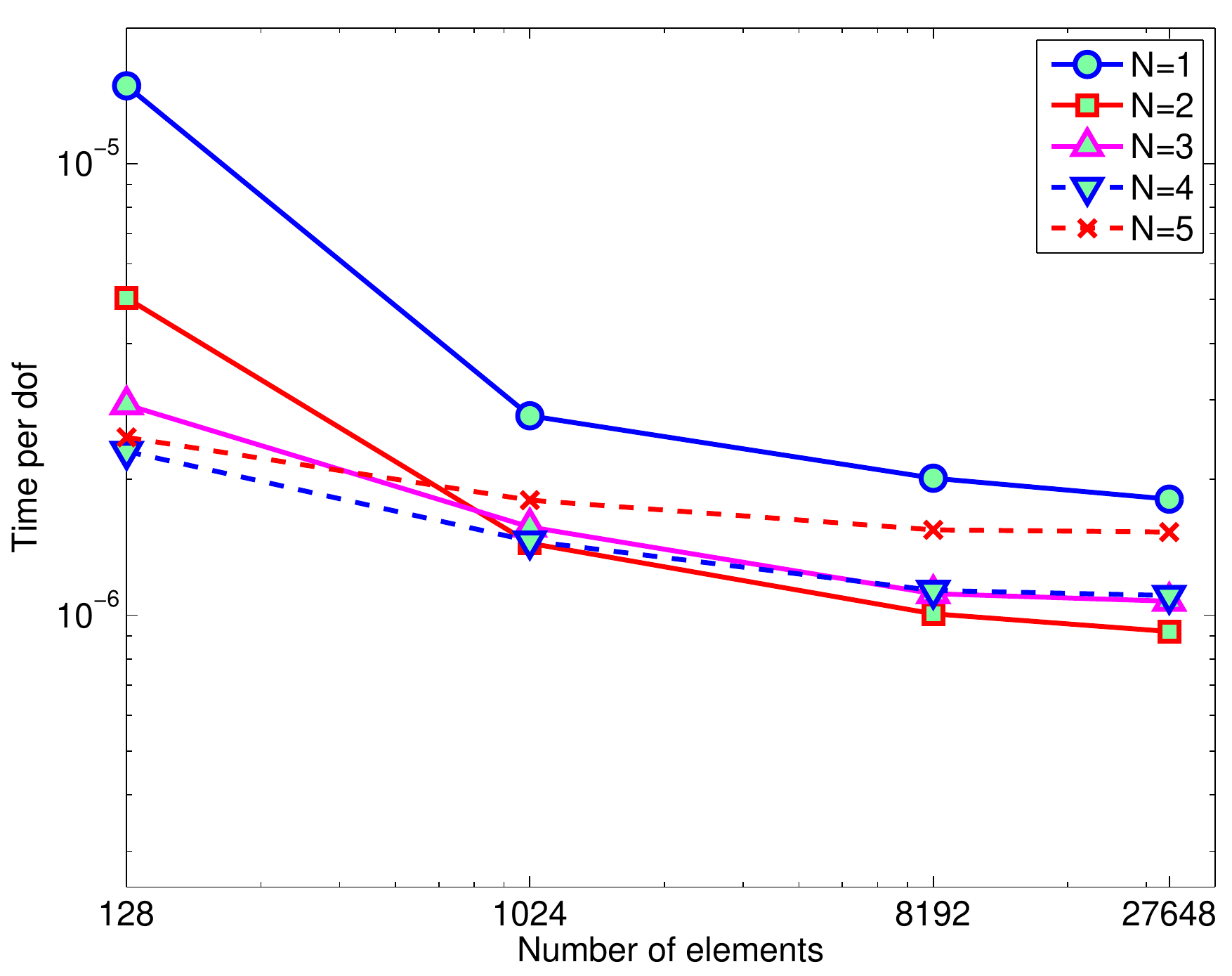}}
\subfloat[Pyramid]{\includegraphics[width=.245\textwidth]{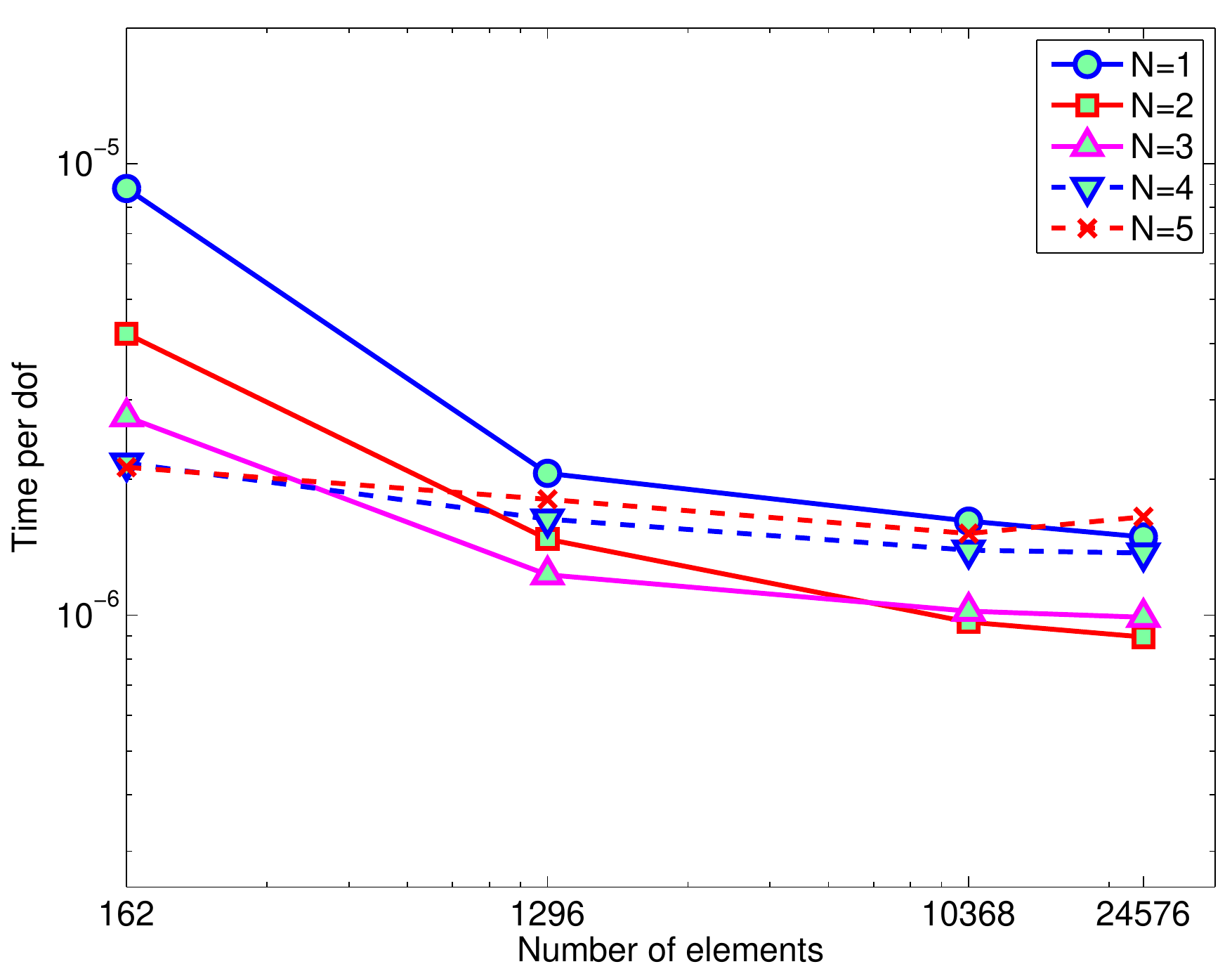}}
\subfloat[Tetrahedra]{\includegraphics[width=.245\textwidth]{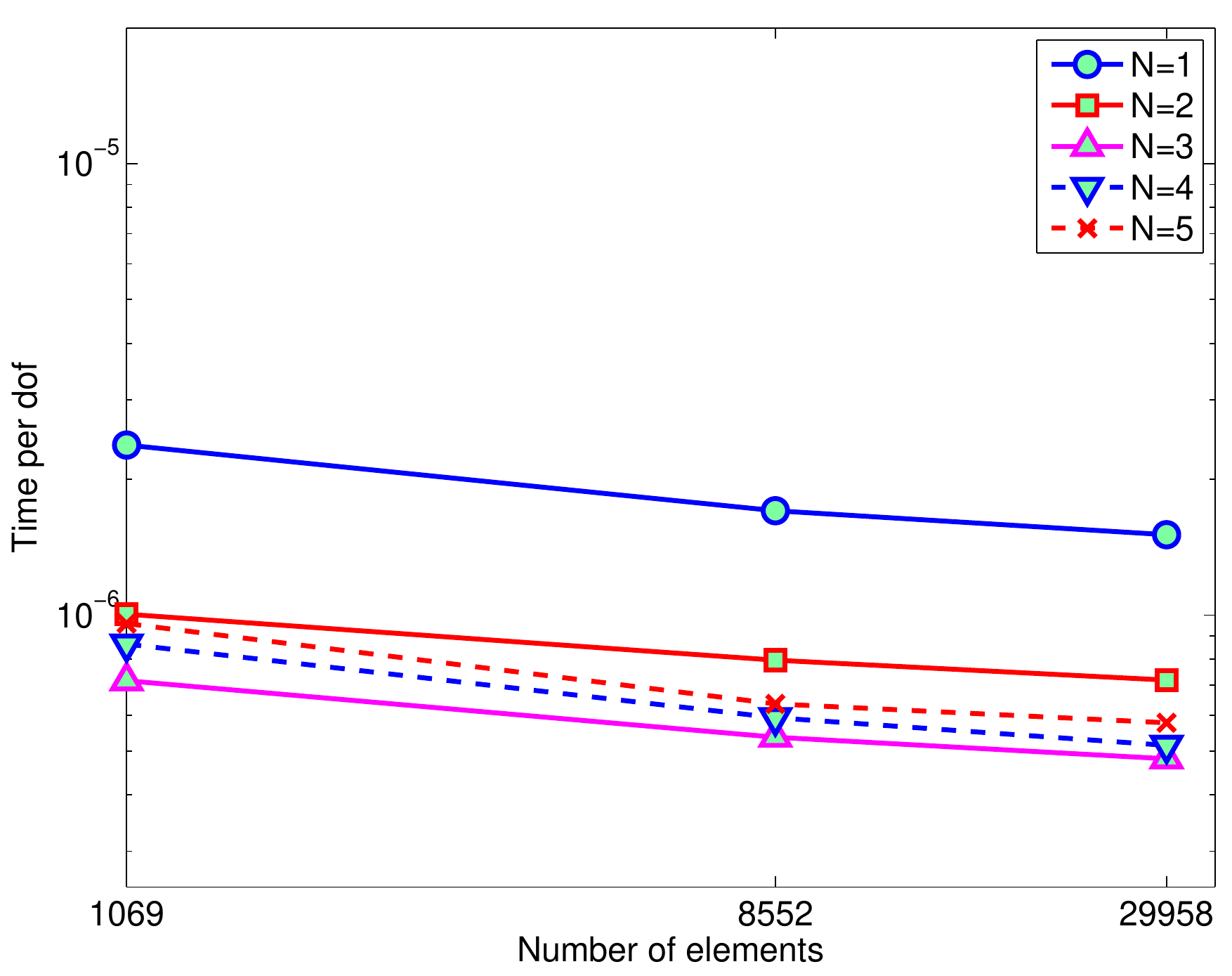}}
\caption{Time-per-dof cost of hexahedral, wedge, pyramidal, and tetrahedral volume kernels on meshes with $K\approx 100, 1000, 10000$ and $30000$ elements at various orders $N$.}
\label{fig:perDofTimes}
\end{figure} 

To illustrate the dependence of runtime on number of elements, we timed the cost of the volume kernel over 500 timesteps on meshes containing approximately $100, 1000, 10000$, and $30000$ elements.  For ease of presentation, we show results only for the volume kernel, though the behavior is similar for the surface and update kernels.  The order of approximation is varied from $N=1,\ldots, 5$ for each mesh, and the kernel runtime is reported in Figure~\ref{fig:perDofTimes}.  Since the computational structure of each volume kernel is the same between the SEM and GL formulations (with the exception of the pyramid\footnote{Under the GL formulation, the pyramid volume kernel may use the more efficient strong formulation.  We show timings, GFLOPS, and estimated bandwidth for the skew-symmetric pyramid kernel only.}) we show timings for only four volume kernels.  

Since the time-per-dof decreases as more elements are processed, if a mesh contains a small number of wedges or pyramids, the actual cost of a wedge or pyramid relative to a tetrahedron may be slightly greater than the reported values of Table~\ref{table:perDofTimes}, though this effect is less pronounced at higher orders of approximation.  For example, for $N=3$ on the largest hybrid mesh (consisting of 113664 elements, with 5120 prisms, 1536 pyramids, and 64000 tetrahedra), prisms and pyramids were respectively 1.7563 and 1.8849 times more expensive per dof than tetrahedra, instead of the estimated 1.6 and 1.28 times suggested by Table~\ref{table:perDofTimes}.

\subsubsection{Estimated GFLOPS and bandwidth}
\label{sec:gflopsbw}
Finally, we present estimated GFLOP and effective bandwidth counts for each kernel in Table~\ref{table:GFLOPS_BW}.  For reference, we include the estimated bandwidth and GFLOPS of a repeated entrywise multiplication kernel in Figure~\ref{fig:GFLOPS_BW_ref}.  For an array $A$ of sufficiently large size (running with the maximum number of threads), we execute $N_{\rm mult}$ times the command $A_{ij} = A_{ij}^2 - C$, where $C$ is some constant.  As $N_{\rm mult}$ increases, the kernel changes from being IO bound to compute-bound.  We achieve roughly $1970$ GFLOPS and 168 GB/s bandwidth at most on an Nvidia GTX 980.  
When utilizing \verb+float4+ operations, we observe the same peak bandwidth, but achieve $2930$ GFLOPS at peak, roughly $1.5\times$ the performance using only floats.  While these numbers are not necessarily indicative of peak performance,  we believe they are representative of  ``good'' performance for a given kernel.
\begin{figure}
\centering
\subfloat[Standard float operations]{\includegraphics[width=.32\textwidth]{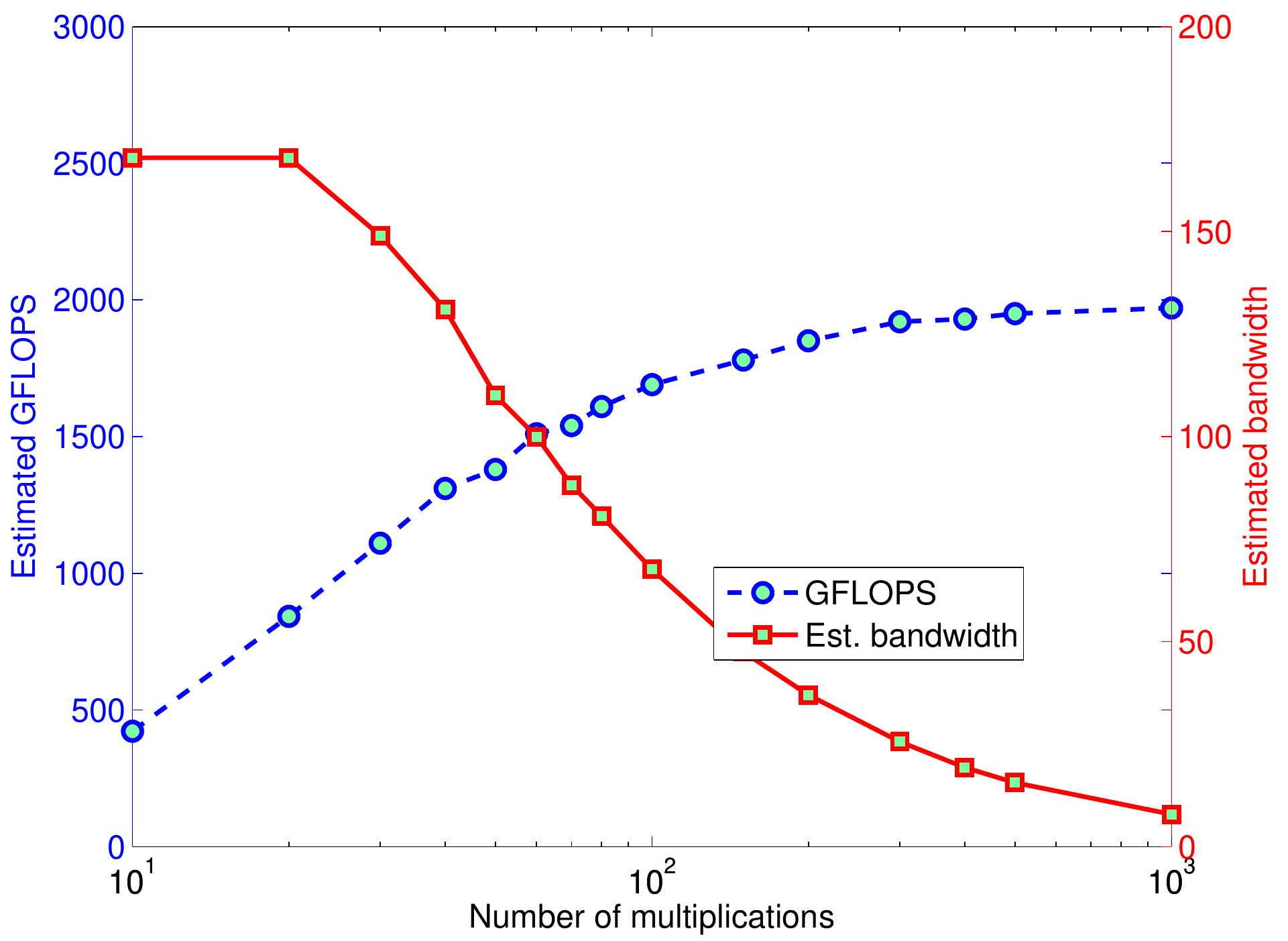}}
\hspace{3em}
\subfloat[Float4 operations]{\includegraphics[width=.32\textwidth]{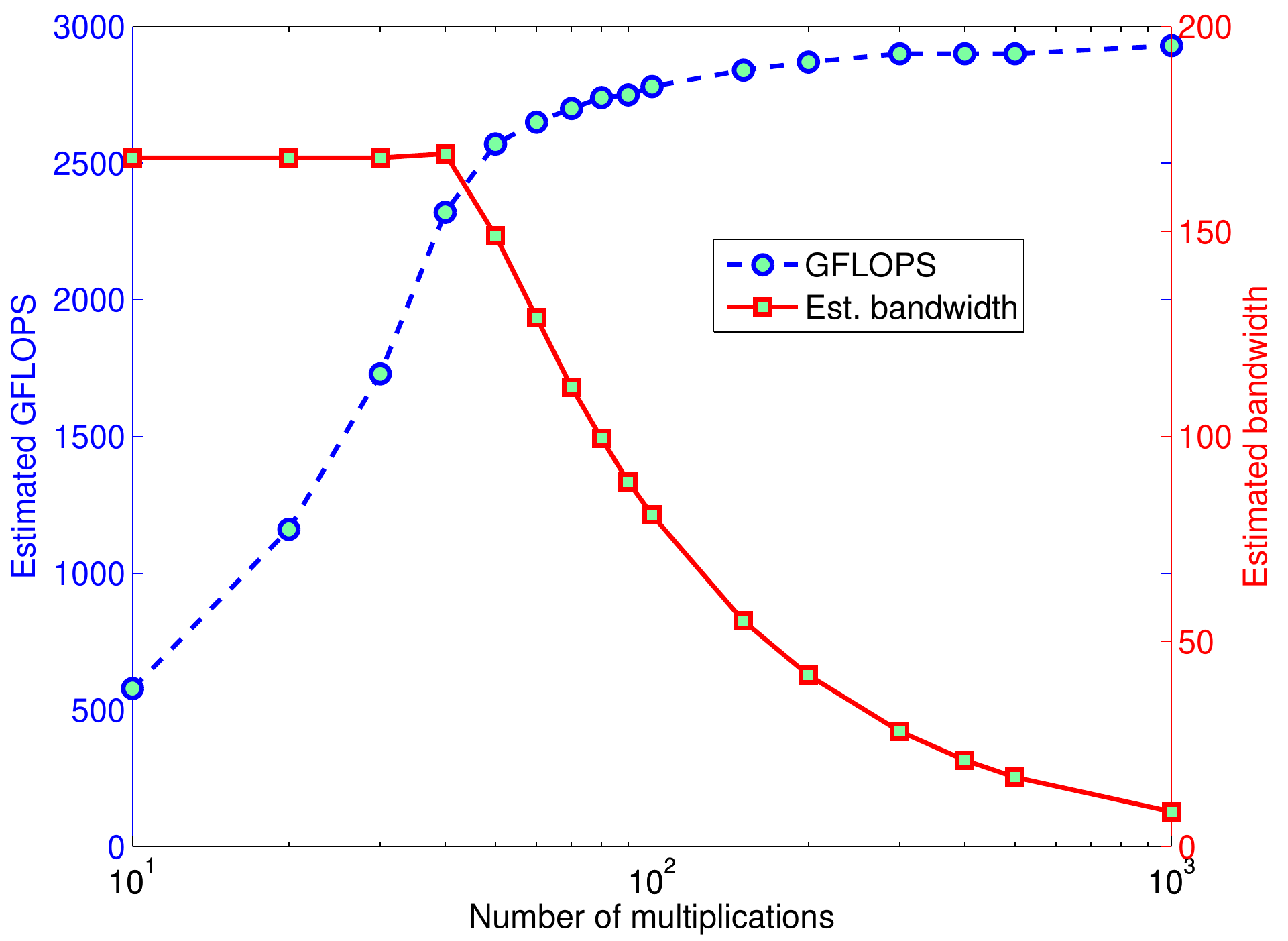}}
\caption{GFLOPS and bandwidth (for a kernel executing repeated entrywise array multiplication/addition) as a function of the number of repeated multiplications $N_{\rm mult}$.  As $N_{\rm mult}$ increases, the kernel goes from being IO bound to compute-bound.  Results from two kernels are presented: one where computations are done using only standard single precision floats, and one where computations are performed using the float4 data type.}
\label{fig:GFLOPS_BW_ref}
\end{figure}

With this in mind, we may compare the GFLOPS and bandwidth of kernels for each element. Comparing only the volume kernels, we see efficiency resulting from both effective hardware utilization and reduced computational load.  For example, the wedge volume kernel shows low estimated bandwidth with high estimated GFLOPS, indicating that its performance is compute-bound.  We note that the observed GFLOPS are relatively high, which may be due to the heavy use of \verb+float4+ operations within the wedge volume kernel.  On the other hand, the bandwidth and GFLOPS for the tetrahedron volume kernel are more balanced.  Finally, though it is similar in structure to the tetrahedral kernel, the pyramid volume kernel shows high estimated bandwidth and low GFLOPS, implying that it is IO bound.  This is likely due to the fact that the skew-symmetric form is used, and requires the loading of twice as many operators (derivative matrices and their transposes) as the tetrahedral kernel.  

For hexahedral kernels, the operators are explicitly stored in shared memory, removing caching effects.  As a result, the reported bandwidth and GFLOPS are both significantly lower than for other elements, but near-peak bandwidth numbers are achieved for $N\geq 3$, implying the kernel is IO bound.  This near-peak performance is likely due to the loading of multiple geometric change-of-variables factors per node per element and the low arithmetic intensity of tensor-product operations, which are effectively hidden during memory retrievals.  It has been noted in \cite{vNek} that, for vertex-mapped hexahedra, the IO bound nature of the hex volume kernel may be addressed by computing these geometric factors on the fly inside the kernel, loading only vertex positions (regardless of order).  

For wedges, pyramids, and tetrahedra, bandwidth is reported both with and without counting operator loads per element.  The reported bandwidth numbers with operator loads are often higher than peak rates, which is likely due to the fact that compiler optimizations and caching effects are not taken into account during estimates.  Removing operator loads from bandwidth counts reveals that the bandwidth for prism, pyramid, and tetrahedron kernels decreases as $N$ increases, verifying that operations for these three element types are compute-bound at high orders.  

\begin{table}
\centering
\subfloat[GFLOPS]{
\begin{tabular}{|c|ccccc|} 
\hline 
$N$ & 1 & 2 & 3 & 4 & 5 \\ 
\hline 
Hex SEM  & & & & & \\ 
\hline 
Volume & 100 &	134&	173&	199 &	232\\
Surface & 68&	51&	57&	47&	35\\
Update & 46	 &34&	49&	40&	44\\
\hline 
Hex GL  & & & & & \\ 
\hline 
Surface & 60&	44&	52&	43&	32\\
Update & 45	& 46&	49&	48&	48\\
\hline 
Wedge  & & & & & \\ 
\hline 
Volume & 242&	552&	1032&	1551&	1956\\
Surface & 197&	418&	566 &	452&	771\\
Update & 166&	403&	736 &	904&	960\\
\hline 
Pyramid  & & & & & \\ 
\hline 
Volume & 77&	161&	238&	259&	392\\
Surface & 187&	344&	850&	482&	922\\
Update & 136&	333&	699&	898&	670\\
\hline 
Tet  & & & & & \\ 
\hline 
Volume & 123&	200&	401&	550&	699\\
Surface & 149&	228&	423&	596&	791\\
Update & 161&	260&	599&	779&	931\\
\hline 
\end{tabular}
}
\subfloat[Est. bandwidth]{
\begin{tabular}{|ccccc|} 
\hline 
 1 & 2 & 3 & 4 & 5 \\ 
\hline 
 & & & & \\ 
\hline 
152	& 160	& 169& 	165& 	167\\
88& 	68	& 81	& 70& 	54\\
159	& 112& 	155& 	125& 	133\\
\hline 
 & & & & \\ 
\hline 
77&	59&	74&	64&	49\\
160&	152&	155&	149&	148\\
\hline 
 & & & & \\ 
\hline 
217/149 &	271/156	&294/122 &	316/107	&300/79\\
166/103&273/98	&328/70	&247/31	&408/33\\
177/116 &309/132&	472/128 &533/99	&536/69
\\
\hline 
 & & & & \\ 
\hline 
220/102	&	444/106	&646/83	&	699/52	&	1054/49\\
163/107	&	236/100	&	510/131	&	271/46	&	497/55\\
155/103		&270/123	&	464/137		&546/116	&397/58	\\
\hline 
 & & & & \\ 
\hline 
238/145 &	322/125&	595/139&	784/116&	974/95\\
122/81	&147/65	&	244/64	&327/58	&	421/50\\
205/147	&238/129&	434/160	&509/144&	559/112\\
\hline 
\end{tabular}
}
\caption{GFLOPS and estimated effective bandwidth counts for each kernel of each element type (after optimization of the number of elements processed per workgroup).  For wedges, pyramids, and tetrahedra, bandwidth is reported $(\text{with operator loads})/(\text{without operator loads})$. }
\label{table:GFLOPS_BW}  
\end{table}

\section{Conclusions}

We have presented in this paper a high order discontinuous Galerkin solver on hybrid, hex-dominant meshes.  A careful selection of basis functions controls the storage cost of the scheme, making it suitable for acclerators and GPUs.  Two energy-stable formulations are given, based on either SEM or Gauss-Legendre nodal bases for the hexahedron.  Explicit bounds on the spectra are computed in terms of the order-dependent constants in the trace inequalities for each type of element, and are used to determine local timestep restrictions for multi-rate timestepping.  Convergence rates are reported for each formulation, and computational cost is assessed by estimating the GFLOPS, bandwidth, and cost-per-dof of each element.  

The focus of this work has been mainly based presenting a DG solver for hybrid meshes on a single GPU.  Future work will focus on the following areas: 
\begin{itemize}
\item \textbf{Wedge basis:} while the LSC-DG wedge basis achieves optimal rates of convergence, there is additional complexity and computational cost compared to polynomial bases (necessity of quadrature, storage of derivatives of $J$).  A low-storage polynomial wedge basis could reduce costs for wedge volume kernels and surface/update kernels for wedges, pyramids, and tetrahedra.  
\item \textbf{Hybrid meshes compared to tetrahedral meshes:} while cost-per-dof comparisons have been made between tetrahedra and other element types, DG on hex-dominant meshes should be compared to optimized nodal DG on fully tetrahedral meshes.  The effect on efficiency of the ratio between hexes and elements of other types should also be carefully investigated.  
\item \textbf{Parallelization and scalability:} multiple GPUs are necessary to handle larger problem sizes.  Performance of multiple GPU-accelerated DG will require load balancing and communication strategies for multi-rate solvers on hybrid meshes.  
\end{itemize}


\section{Acknowledgements}

Jesse Chan is supported by the Rice University CAAM Department Pfieffer Postdoctoral Fellowship and by NSF (award number DMS-1216674).  He would additionally like to acknowledge and thank the Jennifer J. Young Memorial Fund.  Axel Modave is an Honorary Fellow of the Belgian American Educational Foundation (BAEF), and acknowledges Wallonie-Bruxelles International (WBI) and Shell for their support.  T. Warburton is supported partially by ANL (award number 1F-32301, subcontract on DOE DE-AC02-06CH11357).  

\bibliography{hybridg}{}
\bibliographystyle{plain}

\appendix

\section{Explicit trace inequalities for tensor product elements}

The constant in the surface trace inequality for tensor product elements may be computed explicitly.  Throughout these proofs, we define $\phi_j$ be the normalized Legendre polynomials, orthogonal with respect to the standard $L^2([-1,1])$ inner product. We begin by proving a result in 1D.  

\begin{lemma}
Let $u\in P^N([-1,1])$.  Then, 
\[
\LRb{u(-1)} + \LRb{u(1)} \leq \frac{(N+1)(N+2)}{2} \nor{u}_{L^2([-1,1])}^2
\]
\end{lemma}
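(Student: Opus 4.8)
The plan is to expand $u$ in the orthonormal Legendre basis and reduce the claim (interpreting the left-hand side as $\snor{u(-1)}^2 + \snor{u(1)}^2$, which is what makes both sides homogeneous of degree two) to a two-dimensional eigenvalue computation. Writing $u = \sum_{j=0}^N c_j \phi_j$ with $\phi_j$ the normalized Legendre polynomials, orthonormality gives $\nor{u}_{L^2([-1,1])}^2 = \sum_{j=0}^N c_j^2$. The needed ingredient is the endpoint evaluations: since the classical Legendre polynomial $P_j$ satisfies $P_j(1)=1$, $P_j(-1)=(-1)^j$, and $\int_{-1}^1 P_j^2 = 2/(2j+1)$, the normalized polynomials satisfy $\phi_j(1) = \sqrt{(2j+1)/2}$ and $\phi_j(-1) = (-1)^j\sqrt{(2j+1)/2}$. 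Setting $a_j = \sqrt{(2j+1)/2}$, we then have $u(1) = \sum_j c_j a_j$ and $u(-1) = \sum_j c_j (-1)^j a_j$.

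The first point I would emphasize is that bounding each endpoint separately by Cauchy--Schwarz is wasteful: it yields the constant $(N+1)^2$ rather than the sharper $(N+1)(N+2)/2$. Instead I would treat the two endpoints jointly. With the vectors $v = (a_j)_{j=0}^N$ and $w = ((-1)^j a_j)_{j=0}^N$ and the coefficient vector $c = (c_j)$, the quantity to bound is
\[
\snor{u(-1)}^2 + \snor{u(1)}^2 = (v^T c)^2 + (w^T c)^2 = c^T \LRp{vv^T + ww^T} c,
\]
so the sharp constant is exactly the largest eigenvalue of the rank-two matrix $vv^T + ww^T$, whose nonzero eigenvalues coincide with those of the $2\times 2$ Gram matrix
\[
G = \begin{pmatrix} \nor{v}^2 & v^T w \\ v^T w & \nor{w}^2 \end{pmatrix}.
\]

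The remaining step is to compute the three entries of $G$. I would use $\nor{v}^2 = \nor{w}^2 = \sum_{j=0}^N (2j+1)/2 = (N+1)^2/2$ and the alternating sum $v^T w = \sum_{j=0}^N (-1)^j (2j+1)/2$. A short pairing argument on consecutive terms shows $\sum_{j=0}^N (-1)^j(2j+1) = (-1)^N (N+1)$, hence $\snor{v^T w} = (N+1)/2$. Because the diagonal entries of $G$ are equal, its eigenvalues are $\nor{v}^2 \pm \snor{v^T w} = (N+1)^2/2 \pm (N+1)/2$, the larger of which is precisely $(N+1)(N+2)/2$. Maximizing the Rayleigh quotient over $c$ (equivalently over $u \in P^N$) then gives the claimed bound, with equality attained at the eigenvector of $G$ associated with this eigenvalue.

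The main obstacle is conceptual rather than computational: one must recognize that the naive separate bounds are not tight and that the correct object is the joint rank-two quadratic form, which collapses to a $2\times 2$ eigenvalue problem via the coincidence of nonzero eigenvalues of $UU^T$ and $U^T U$. The only genuinely computational point is evaluating the alternating sum $\sum_{j=0}^N (-1)^j (2j+1)$, which is elementary.
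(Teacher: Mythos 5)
Your proof is correct, and it reaches the sharp constant by a route that differs from the paper's in its linear-algebra bookkeeping. Both arguments begin identically: expand $u$ in the orthonormal Legendre basis (and both tacitly fix the same typo in the statement, reading the left-hand side as $\snor{u(-1)}^2+\snor{u(1)}^2$), so that the problem becomes computing the spectral radius of the surface matrix $M_s = vv^T + ww^T$ with $v_j=\phi_j(1)$, $w_j=\phi_j(-1)=(-1)^j v_j$. The paper then exploits the parity relation to show $M_s$ is block diagonal in the even/odd indices, with each block a rank-one matrix whose spectral radius is its trace, and takes the larger of the two traces. You instead keep $M_s$ whole, observe it has rank at most two, and pass to the $2\times 2$ Gram matrix $G=U^TU$ (with $U=[v\ w]$), whose eigenvalues $\nor{v}^2\pm\snor{v^Tw} = \tfrac{(N+1)^2}{2}\pm\tfrac{N+1}{2}$ you compute directly. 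The two computations are secretly the same diagonalization: the eigenvectors $(1,\pm 1)$ of your $G$ correspond exactly to the paper's even/odd split, and indeed $\nor{v}^2+v^Tw=\sum_{j\ \mathrm{even}}(2j+1)$ and $\nor{v}^2-v^Tw=\sum_{j\ \mathrm{odd}}(2j+1)$ are precisely the paper's two block traces. What your version buys is that it never uses the symmetry $\phi_j(-1)=(-1)^j\phi_j(1)$ in an essential way --- the Gram-matrix reduction would work verbatim for evaluation at any two points, or for unequally weighted endpoint terms --- whereas the paper's parity decomposition is tied to the symmetric pair $\pm 1$ but makes the even/odd structure explicit, which is what resurfaces in the paper's follow-up lemma identifying the (anti)symmetric extremal polynomial at GLL nodes.
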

\begin{proof}
In 1D, we wish to derive an expression for the spectral radius $\rho(M_s)$, where
\[
(M_s)_{ij} = \phi_j(-1)\phi_i(-1) +  \phi_j(1)\phi_i(1).
\]
and $\phi_j(x)$ are normalized Legendre polynomials.  Since Legendre polynomials are symmetric across $x=0$, $\phi_j(-1) = \phi_j(1)$ for $j$ even, and  $\phi_j(-1) = -\phi_j(1)$ for $j$ odd.  Then, assuming $i$ and $j$ do not share the same parity, we have
\[
(M_s)_{ij} = \phi_j(-1)\phi_i(-1) +  \phi_j(1)\phi_i(1) = 0.
\]
This implies $M_s$ is block diagonal, with each block corresponding to even/odd powers of the Legendre polynomial.  Both the even block $M^e$ and odd block $M^o$ reduce down to 
\[
(M^e)_{ij} = (M^o)_{ij} = 2\phi_j(-1)\phi_i(-1)  = 2\phi_j(1)\phi_i(1).
\]
These are both rank 1 matrices, such that
\[
\rho(M^e) = 2\sum_{j = {\rm even}} \phi_j(-1)^2, \quad \rho(M^o) = 2\sum_{j = {\rm odd}} \phi_j(-1)^2.
\]
For both even and odd $i,j$, $\phi_j(-1)^2 = (2i+1)/2$, implying that 
\[
\rho(M_s) = 2\max\LRp{\rho(M^e), \rho(M^o)} = \frac{(N+1)(N+2)}{2}.
\]
\end{proof}

The extremal polynomial for the above inequality may also be characterized explicitly.  
\begin{lemma}
Define $r_i$ to be the quadrature points of the $(N+2)$-point Gauss-Legendre-Lobatto (GLL) rule.  Let $u$ be the order $(N+1)$ Lagrange polynomial that is zero at $N$ interior GLL nodes and unity at either $r=1$ or $r = -1$.  Then,
\[
\LRb{u(-1)} + \LRb{u(1)} = \frac{(N+1)(N+2)}{2} \nor{u}_{L^2([-1,1])}^2
\]
\end{lemma}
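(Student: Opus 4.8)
The plan is to verify the claimed identity by evaluating both sides directly, exploiting the nodal (Lagrange) structure of $u$ together with the quadrature weights of the $(N+2)$-point GLL rule. Since $u$ is the Lagrange polynomial associated with one endpoint of the rule, the Kronecker-delta property pins down its values at the nodes: $u$ equals $1$ at the chosen endpoint and $0$ at the remaining $N+1$ GLL nodes, which include the opposite endpoint. The left-hand side is then immediate, since exactly one of $u(-1), u(1)$ is unity and the other vanishes, giving $|u(-1)|^2 + |u(1)|^2 = 1$.

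First I would evaluate the norm on the right-hand side using GLL quadrature, consistent with the discrete SEM inner product used throughout these trace estimates (as the main text notes, for SEM the $L^2$ norm is replaced by the equivalent norm computed with GLL quadrature). Writing $\|u\|^2 = \sum_j w_j\, u(r_j)^2$ and invoking the Lagrange property again, every term drops except the one at the chosen endpoint, collapsing the sum to the single endpoint weight $w_{\text{end}}$.

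Next I would supply the closed form of the endpoint weight. From the standard GLL weight formula for the $(N+2)$-point rule, $w_j = \frac{2}{(N+1)(N+2)\,[P_{N+1}(r_j)]^2}$, together with $P_{N+1}(\pm 1) = (\pm 1)^{N+1}$ so that $[P_{N+1}(\pm 1)]^2 = 1$, one obtains $w_{\text{end}} = \frac{2}{(N+1)(N+2)}$. (Equivalently, since $\deg u = N+1 \le 2N+1$, GLL integrates $u$ exactly, so $w_{\text{end}} = \int_{-1}^1 u$, which may be computed directly as an independent check.) Substituting gives $\|u\|^2 = \frac{2}{(N+1)(N+2)}$, whence $\frac{(N+1)(N+2)}{2}\|u\|^2 = 1 = |u(-1)|^2 + |u(1)|^2$, the asserted equality.

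The one genuinely delicate point I would flag is the choice of norm. Because $u^2$ has degree $2N+2$, which exceeds the GLL exactness threshold $2N+1$, the discrete GLL norm of $u$ differs from its continuous $L^2$ norm; it is the discrete (SEM) norm for which the constant $\frac{(N+1)(N+2)}{2}$ is attained exactly, so the equality must be read in that inner product (a direct check at $N=1$, where $u=\tfrac{1}{2}(r^2+r)$, shows the continuous value $\tfrac{4}{15}$ does not hit the constant while the discrete value $\tfrac{1}{3}$ does). This is precisely the sense in which the face-to-volume trace constant is sharp for the lumped SEM mass matrix; once this interpretation is fixed, the remaining steps are an immediate consequence of the Lagrange property and the explicit endpoint weight.
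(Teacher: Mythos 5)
Your proof establishes a different statement from the one the paper intends, and the discrepancy traces to how you identified $u$. You read ``the order $(N+1)$ Lagrange polynomial \ldots unity at either $r=1$ or $r=-1$'' as the full Lagrange basis function of the $(N+2)$-point nodal set, which vanishes at the opposite endpoint and has degree $N+1$. The paper's proof makes clear that the intended $u$ is instead the interpolant determined by the $N+1$ conditions ``zero at the $N$ interior GLL nodes, unity at one endpoint'': nothing is imposed at the other endpoint, and because the GLL nodes are symmetric about $r=0$, this $u$ is symmetric or antisymmetric, so $\LRb{u(-1)}=\LRb{u(1)}=1$ and the left-hand side equals $2$, not $1$. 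The degree matters twice. First, $u^2$ then has degree $2N\le 2N+1$, so the $(N+2)$-point GLL rule integrates $u^2$ \emph{exactly}, and $\nor{u}^2_{L^2([-1,1])} = w_1 u(-1)^2 + w_{N+2}u(1)^2 = \frac{4}{(N+1)(N+2)}$ holds for the true $L^2$ norm --- no reinterpretation in a discrete inner product is needed, and both sides equal $2$. Second, and more importantly, this $u$ lies in $P^N$, so the lemma does what it is there to do: exhibit a polynomial in $P^N$ attaining the constant $\frac{(N+1)(N+2)}{2}$ of the preceding trace inequality (which is stated for the exact $L^2$ norm, not the SEM norm), proving that constant sharp.

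Your own $N=1$ computation was the red flag: with your reading, $u=(r^2+r)/2$ gives $4/15 \neq 1/3$, and the correct response is not to swap in the discrete norm but to swap in the correct extremal polynomial, $u=r$ (degree $N=1$, values $\pm 1$ at both endpoints), for which $\nor{u}^2_{L^2([-1,1])} = 2/3$ and both sides equal $2$. The patched statement you prove --- a degree-$(N+1)$ endpoint Lagrange polynomial attaining $\frac{(N+1)(N+2)}{2}$ in the $(N+2)$-point GLL discrete norm --- is true, but it is essentially the paper's separate SEM lemma with $N$ replaced by $N+1$, and it cannot serve the present lemma's purpose: a polynomial of degree $N+1$ says nothing about sharpness of an inequality over $P^N$, and the equality you obtain does not hold in the $L^2([-1,1])$ norm that appears in the statement. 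In fairness, the lemma's wording (``order $(N+1)$,'' and the missing squares on the left-hand side) invites your reading; but the paper's argument, which relies on $u(1)=\pm u(-1)$ and on exact GLL integration of $u^2$ --- both false for your choice of $u$ --- is unambiguous about what is meant.
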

\begin{proof}
We note that, while the $(N+1)$-point GLL quadrature does not integrate polynomials of order $2N$ exactly, polynomials of order $2N$ are integrated exactly by the $(N+2)$-point GLL quadrature.  

Since $u(r)$ is symmetric or antisymmetric across $r = 0$ (this may be shown by explicitly writing out the interpolating Lagrange polynomial), $u(1) = \pm u(-1)$ depending on whether $N$ is even or odd.  Then, $\nor{u}^2_{L^2}$ is given as
\[
\nor{u}^2_{L^2} = \int_{-1}^1u^2 = \sum_{j=1}^{N+2} w_j u(r_j) = w_1 u(-1)^2 + w_{N+2} u(1)^2 = u(1)^2 (w_1 + w_{N+2}).
\]
The surface integral of $u^2$ is just $u(1)^2 + u(-1)^2 = 2u(1)^2$.  As a result, 
\[
\frac{2u(1)^2 }{u(1)^2 (w_1 + w_{N+2})} = \frac{1}{w_1} = \frac{(N+1)(N+2)}{2}.
\]
by the fact that the GLL weights for an $(N+2)$-point rule are each $\frac{2}{(N+1)(N+2)}$ at the endpoints.  
\end{proof}

A curious coincidence is that, since the $N+1$-point Gauss-Radau-Jacobi (GRJ) rule contains all but one endpoint of the $(N+2)$-point GLL rule, the extremal polynomial is identical to the extremal polynomial for triangle faces, which is unity at the $-1$ GRJ point and zero at all others \cite{warburton2003constants}.  


We may prove a similar result, replacing the mass matrix with the underintegrated Gauss-Legendre-Lobatto quadrature mass matrix.  
\begin{lemma}
Let $u\in P^N([-1,1])$.  Then, for $N\geq 1$, the following is bound is tight
\[
\LRb{u(-1)} + \LRb{u(1)} \leq \frac{N(N+1)}{2} \nor{u}_{\rm SEM}^2.
\]
\end{lemma}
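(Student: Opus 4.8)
The plan is to follow the eigenvalue strategy of the first lemma, but to exploit the fact that the SEM inner product diagonalizes in the \emph{nodal} basis rather than the modal (Legendre) basis. First I would write the quantity to be bounded (reading the left-hand side as $\LRb{u(-1)}^2 + \LRb{u(1)}^2$, consistent with the surface integral interpretation of the earlier lemmas) as a generalized Rayleigh quotient
\[
\max_{u \in P^N([-1,1])} \frac{\LRb{u(-1)}^2 + \LRb{u(1)}^2}{\nor{u}_{\rm SEM}^2},
\]
and expand $u$ in the Lagrange basis $\LRc{\ell_i}$ associated with the $(N+1)$ Gauss-Legendre-Lobatto (GLL) nodes $r_1 = -1 < r_2 < \cdots < r_{N+1} = 1$. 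Writing $u = \sum_i u_i \ell_i$, the nodal property gives $u(r_i) = u_i$; in particular $u(-1) = u_1$ and $u(1) = u_{N+1}$.

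The key simplification is that the SEM norm, being defined through $(N+1)$-point GLL quadrature, is diagonal in this basis: $\nor{u}_{\rm SEM}^2 = \sum_{k=1}^{N+1} w_k u_k^2$, where $w_k$ are the GLL weights. The Rayleigh quotient then reduces to the elementary ratio
\[
\frac{u_1^2 + u_{N+1}^2}{\sum_{k=1}^{N+1} w_k u_k^2}.
\]
Since every interior coefficient $u_k$ (for $2 \leq k \leq N$) contributes only to the denominator, the maximum is attained by setting all interior coefficients to zero. Using the symmetry $w_1 = w_{N+1}$ of the endpoint weights, the quotient collapses to $1/w_1$, independent of the ratio $u_1/u_{N+1}$.

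It then remains to insert the explicit endpoint weight of the $(N+1)$-point GLL rule, $w_1 = w_{N+1} = 2/(N(N+1))$, which yields the claimed constant $N(N+1)/2$ and requires $N \geq 1$ so that the rule has at least two nodes. Tightness follows at once: taking $u = \ell_1$, the endpoint Lagrange polynomial that vanishes at all GLL nodes except $r_1 = -1$, gives $\LRb{u(-1)}^2 + \LRb{u(1)}^2 = 1$ and $\nor{u}_{\rm SEM}^2 = w_1$, so equality holds.

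There is essentially no analytic obstacle here; the only ingredient beyond bookkeeping is the endpoint GLL weight formula, and the crucial move is simply choosing the nodal rather than the modal basis so that both the boundary functional and the discrete norm diagonalize simultaneously (in contrast to the $L^2$ lemma, where the Legendre basis was natural and a genuine rank-one eigenvalue computation was needed). The one point demanding care is the restriction $N \geq 1$: for $N = 0$ the ``$(N+1)$-point GLL rule'' degenerates to a single node, and neither the endpoint-weight formula nor the extremal polynomial is meaningful.
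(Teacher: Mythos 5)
Your proposal is correct and follows essentially the same route as the paper: the paper likewise works in the GLL nodal basis (normalized as $\ell_i/\sqrt{w_i}$ so it is orthonormal in the SEM inner product), observes that the surface mass matrix is then diagonal with only the two endpoint entries $1/w_1 = 1/w_{N+1}$ nonzero, and concludes via $w_1 = 2/(N(N+1))$, with the endpoint Lagrange functions as extremizers. Your Rayleigh-quotient phrasing and the paper's spectral-radius-of-$M_s$ computation are the same argument in different notation.
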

\begin{proof}
We take the basis $\phi_i(r) = \ell_i(r)/\sqrt{w_i}$, where $\ell_i(r)$ is the Lagrange polynomial at the $i$th GLL node and $w_i$ are the quadrature weights of the $(N+1)$-point GLL rule.  Since this basis is orthogonal with respect to the SEM inner product, the constant in the trace inequality is the spectral radius of the surface mass matrix under this basis, which has entries $(M_s)_{ij} = \phi_i(-1)\phi_j(-1)+\phi_i(1)\phi_j(1)$.  Applying the Lagrange property, we see that $M_s$ is diagonal with only two nonzero diagonal entries 
\[
(M_s)_{11} = \frac{1}{w_1}, \qquad (M_s)_{N+1,N+1} = \frac{1}{w_{N+1}}.
\]
$\rho(M_s)$ is the maximum of these two entries, and the bound is proven by noting $w_1 = w_{N+1} = \frac{2}{N(N+1)}$.  The extremal polynomial for this bound is any convex combination of $\phi_1(r)$ or $\phi_{N+1}(r)$. 
\end{proof}

\subsection{Tensor product elements}
 \label{app:TPtrace}
For a $d$-dimensional hypercube $[-1,1]^d$, we may define an orthonormal tensor product basis as 
\[
\phi_{i_1,\ldots, i_d} = \prod_{k=1}^d \phi_{i_k}(r_{k}), \quad 0 \leq i_k \leq N,
\]
where $r_k$ is the coordinate in the $k$th direction.  Using the above basis, we may prove the following theorem:
\begin{theorem}
Let $u \in {\rm span}\LRc{\phi_{i_1,\ldots, i_d}}$ for $0\leq i_1,\ldots i_d \leq N$.  Then, 
\[
\nor{u}^2_{L^2\LRp{\partial \widehat{K}}} \leq d\frac{(N+1)(N+2)}{2}\nor{u}_{L^2\LRp{\widehat{K}}}
\]
Furthermore, this bound is tight.  
\end{theorem}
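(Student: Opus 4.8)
The plan is to reduce the $d$-dimensional trace inequality to the one-dimensional lemma proved above by slicing the boundary $\partial \widehat{K}$ face-by-face and integrating out the transverse directions with Fubini's theorem. The boundary of the hypercube $[-1,1]^d$ consists of $2d$ faces, which group naturally into $d$ pairs, the $k$th pair being the two faces $\{r_k = 1\}$ and $\{r_k = -1\}$. Thus
\[
\nor{u}^2_{L^2\LRp{\partial \widehat{K}}} = \sum_{k=1}^d \int_{[-1,1]^{d-1}} \LRp{ \LRb{u}^2_{r_k = -1} + \LRb{u}^2_{r_k = 1} } \prod_{j \neq k} \d{r_j}.
\]
For each fixed $k$ and each fixed choice of the transverse coordinates $(r_j)_{j\neq k}$, the restriction of $u$ to the line in the $r_k$ direction is a univariate polynomial of degree at most $N$, so the one-dimensional trace inequality $\LRb{u(-1)}^2 + \LRb{u(1)}^2 \leq \frac{(N+1)(N+2)}{2}\nor{u}^2_{L^2([-1,1])}$ proved above applies and bounds the integrand by $\frac{(N+1)(N+2)}{2}\int_{-1}^1 \LRb{u}^2 \d{r_k}$. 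Integrating over the remaining coordinates gives, for each $k$,
\[
\int_{[-1,1]^{d-1}} \LRp{ \LRb{u}^2_{r_k = -1} + \LRb{u}^2_{r_k = 1} } \prod_{j \neq k} \d{r_j} \leq \frac{(N+1)(N+2)}{2} \nor{u}^2_{L^2\LRp{\widehat{K}}},
\]
and summing over $k = 1,\ldots,d$ produces the factor of $d$ and the claimed bound.

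For tightness I would exhibit a function attaining equality by taking a tensor product of the one-dimensional extremal polynomials characterized above. Let $v \in P^N([-1,1])$ be the 1D extremal (the GLL-based Lagrange polynomial for which $\LRb{v(-1)}^2 + \LRb{v(1)}^2 = \frac{(N+1)(N+2)}{2}\nor{v}^2_{L^2([-1,1])}$), and set $u(r_1,\ldots,r_d) = \prod_{k=1}^d v(r_k)$, which lies in $\text{span}\{\phi_{i_1,\ldots,i_d}\}$. On each face $\{r_k = \pm 1\}$ the restriction factors as $v(\pm 1)\prod_{j\neq k} v(r_j)$, so a direct computation gives $\nor{u}^2_{L^2\LRp{\partial \widehat{K}}} = d\LRp{\LRb{v(-1)}^2 + \LRb{v(1)}^2}\nor{v}_{L^2([-1,1])}^{2(d-1)}$ while $\nor{u}^2_{L^2\LRp{\widehat{K}}} = \nor{v}_{L^2([-1,1])}^{2d}$. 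Their ratio is exactly $d\frac{\LRb{v(-1)}^2+\LRb{v(1)}^2}{\nor{v}^2_{L^2([-1,1])}} = d\frac{(N+1)(N+2)}{2}$, which matches the upper bound and establishes tightness.

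The routine part is the upper bound, which is essentially Fubini plus the scalar lemma. The one point that needs care --- the main (mild) obstacle --- is tightness: a priori, equality in the summed estimate requires the $k$th one-dimensional inequality to be sharp along almost every transverse line, simultaneously for all $d$ directions, and it is not obvious that a single function can be extremal in every direction at once. The product construction resolves this precisely because the 1D inequality is homogeneous of degree two on both sides, so the restriction $v(r_k)\cdot(\text{const})$ is extremal in the $r_k$-direction exactly when $v$ is; the product structure then makes this hold along every line in every coordinate direction at once.
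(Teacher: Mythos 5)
Your proof is correct, but your route to the upper bound is genuinely different from the paper's. You slice: the boundary integral splits into $d$ pairs of opposite faces, along every line parallel to the $r_k$-axis the restriction of $u$ is a univariate polynomial of degree at most $N$, so the 1D lemma applies pointwise in the transverse variables and Fubini does the rest. The paper instead works at the matrix level: it expands $u$ in the orthonormal tensor-product Legendre basis, writes the surface mass matrix as $M_s = \sum_{k=1}^d M_s^k$, where orthogonality forces each $M_s^k$ to be a permuted block-diagonal copy of the 1D surface mass matrix, and then bounds $\rho(M_s) \leq \sum_k \rho(M_s^k) = d(N+1)(N+2)/2$ by Weyl's inequality. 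The two arguments are parallel (your ``sum over $k$'' is their Weyl step), but yours is more elementary --- no basis expansion, no spectral theory --- and it transfers verbatim to any tensorizing norm for which the 1D inequality holds, such as the discrete SEM norm treated immediately after the theorem. What the paper's spectral formulation buys is the identification of the sharp constant with the largest eigenvalue of the generalized problem $M_s v = \lambda M v$, which is exactly how the trace constants for the non-tensor-product elements are computed numerically in \ref{app:MTconsts}. For tightness both you and the paper use the tensor product of 1D extremals; your verification is cleaner, since you factor the integrals and invoke 1D extremality as a black box, whereas the paper re-evaluates the volume and surface norms explicitly with the $(N+2)$-point GLL rule. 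Your closing remark about why tightness is not automatic --- extremality must hold along every line in every coordinate direction simultaneously, which the product structure plus the degree-two homogeneity of the 1D inequality delivers --- is a correct observation that the paper leaves implicit.
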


\begin{proof}
The surface mass matrix over the $d$-dimensional hypercube is 
\[
(M_s)_{i_1, \ldots, i_d, j_1, \ldots, j_d} = \sum_{k=1}^d \LRp{\phi_{i_k}(-1)\phi_{j_k}(-1) + \phi_{i_k}(1)\phi_{j_k}(1)} \prod_{l\neq k} \int_{-1}^1 \phi_{i_l}\phi_{j_l}\diff r_l.
\]
By the orthogonality of Legendre polynomials, this reduces to
\begin{align*}
(M_s)_{i_1, \ldots, i_d, j_1, \ldots, j_d} &= \sum_{k=1}^d (M^k_s)_{i_1, \ldots, i_d, j_1, \ldots, j_d}, \\ 
(M^k_s)_{i_1,\ldots, i_d, j_1,\ldots, j_d} &= \LRp{\phi_{i_k}(-1)\phi_{j_k}(-1) + \phi_{i_k}(1)\phi_{j_k}(1)} \prod_{l\neq k} \delta_{i_l j_l}.
\end{align*}
This results in a block diagonal matrix in all indices $i_l,j_l$ for $l\neq k$.  Also, by the same argument as in the 1D case, if pairs of indices $i_k,j_k$ do not share the same parity, 
\[
\LRp{\phi_{i_k}(-1)\phi_{j_k}(-1) + \phi_{i_k}(1)\phi_{j_k}(1)} = 0,
\]
implying that each matrix $M_s^k$ consists of two blocks of even and odd indices.  Additionally, each matrix $M_s^k$ contains the same entries as the 1D surface mass matrix and are identical up to a permutation.  An application of Weyl's inequality then gives
\[
\rho\LRp{M_s} \leq \sum_{k=1}^d\rho\LRp{M_s^k} = d\frac{(N+1)(N+2)}{2}.
\]

We may show that this bound is tight by constructing the extremal polynomial in $d$ dimensions as the tensor product of the 1D polynomial.  Take $u_{\rm 1D}(r_k)$ to be the 1D extremal polynomial in the $k$th coordinate and define 
\[
u(r_1,\ldots, r_d) = \prod_{k=1}^d u_{\rm 1D}(r_k).
\]
The $(N+2)^d$ tensor product GLL rule integrates this exactly, such that for the reference element $\widehat{K} = [-1,1]^d$, 
\begin{align*}
\nor{u}_{L^2\LRp{\widehat{K}}}^2 
&=\sum_{i_1 = 1}^{N+2}\sum_{i_2 = 1}^{N+2}\ldots \sum_{i_d = 1}^{N+2}  w_{i_1}w_{i_2}\ldots w_{i_d} u^2_{\rm 1D}(r_{1,i_1})u^2_{\rm 1D}(r_{2,i_2})\ldots u^2_{\rm 1D}(r_{d,i_d})
= 2^{d} w_1^d.
\end{align*}
where $2^d$ is the number of vertices on the $d$-dimensional hypercube.  

The surface norm of $u$ for a $d$-dimensional hypercube is the sum of the integrals over the $2d$ faces.  Since each face itself is a hypercube of dimension $d-1$, the integral of $u$ over the face is $2^{d-1} w_1$, and the surface norm of $u$ reduces to
\begin{align*}
\nor{u}_{L^2\LRp{\widehat{K}}}^2 &= \sum_{f=1}^{2d} 2^{d-1}w_1^{d-1} = d 2^d w_1^{d-1},
\end{align*}
and we may conclude that
\[
\frac{\nor{u}_{L^2\LRp{\partial \widehat{K}}}^2}{\nor{u}_{L^2\LRp{\widehat{K}}}^2} = \frac{d}{w_1} = d\frac{(N+1)(N+2)}{2}. 
\]
\end{proof}

For $N>1$, similar steps may be followed to prove the equivalent trace inequality under SEM quadrature in $d$ dimensions 
\[
\nor{u}^2_{{\rm SEM}\LRp{\partial \widehat{K}}} \leq d\frac{N(N+1)}{2}\nor{u}_{{\rm SEM}\LRp{\widehat{K}}}.
\]
We note that Ern and Burman also utilized GLL weights to prove a similar trace inequality in $d$ dimensions
\[
\nor{u}^2_{L^2\LRp{\partial \widehat{K}}} \leq d\frac{(N+1)(N+2)}{2}\LRp{2+\frac{1}{N}}^d \nor{u}^2_{L^2\LRp{\widehat{K}}}.
\]  
The additional factor of $(2+1/N)^d$ results from the equivalence constant between the $L^2$ norm and discrete $L^2$ norm resulting from underintegration using GLL quadrature.  

\section{Computed Markov and trace inequality constants}
\label{app:MTconsts}
For wedge, pyramid, and tetrahedral elements, we have not been able to determine explicit expressions for the constants $C_T(N)$ in trace inequalities over reference elements $\widehat{K}$
\[
\nor{u}^2_{L^2(\partial \widehat{K})} \leq C_T(N) \nor{u}^2_{L^2(\widehat{K})}.
\]
However, $C_T(N)$ may be computed as the maximum eigenvalue of a generalized eigenvalue problem
\[
M_s v = \lambda M v,
\]
where $M_s$ is the surface mass matrix and $M$ is the volume mass matrix.  We may similarly compute the trace inequality constants $C_{\rm SEM}(N)$  for the case when SEM quadrature is used for hexahedra and quadrilateral faces.  
\begin{table}
\centering                                                                                   
\begin{tabular}{|c|c|c|c|c|c|c|c|c|c|}                         
\hline                                                   
$N$ & 1 & 2 & 3 & 4 & 5 & 6 & 7 & 8 & 9 \\ 
\hline                                                   
Hexahedron &  9  & 18 &   30 &   45  &  63 &   84 &  108 &  135  & 165\\
\hline
Wedge & 9.93 & 18.56 & 29.03 & 42.99 & 58.80 & 78.01 & 99.27 & 123.76 & 150.48 \\
\hline                                                   
Pyramid & 11.68 & 20.89 & 32.84 & 47.59 & 65.17 & 85.60 & 108.90 & 135.07 & 164.11 \\
\hline                                                   
Tetrahedron & 12.22 & 20.46 & 29.18 & 41.65 & 54.45 & 71.10 & 88.32 & 109.04 & 130.67 \\ 
\hline                                                   
\end{tabular}
\caption{Computed trace constants $C_T(N)$ using the full mass matrix for various reference elements up to $N=9$. }
\label{table:computedTrace}
\end{table}
\begin{table}                                                 
\centering                                                                 
\begin{tabular}{|c|c|c|c|c|c|c|c|c|c|}                         
\hline                                                   
$N$ & 1 & 2 & 3 & 4 & 5 & 6 & 7 & 8 & 9 \\ 
\hline                                                                     
Hexahedron & 3 & 9 & 18 & 30 & 45 & 63 & 84 & 108 & 135 \\   
\hline             
Wedge & 46.46 & 51.70 & 63.94 & 83.32 & 104.76 & 132.20 & 161.22 & 195.95 & 232.52 \\
\hline                                                                       
Pyramid & 31.58 & 37.12 & 47.59 & 60.95 & 77.14 & 96.31 & 118.52 & 143.80 & 172.13 \\                                                         
\hline                                                                     
\end{tabular}                                                              
\caption{Computed trace constants $C_{\rm SEM}(N)$ using SEM quadrature for various reference elements up to $N=9$.  SEM quadrature is used to compute the mass matrix and surface mass matrices when quadrilateral faces are present.  The constants for tetrahedra are identical in both cases.}
\label{table:computedTraceSEM}
\end{table}              
For the Markov inequality on the reference element
\[
\nor{\Grad u}^2_{L^2(\widehat{K})} \leq C_M(N) \nor{u}^2_{L^2(\widehat{K})}
\]
we may similarly compute the constant $C_M(N)$ as the maximum eigenvalue of
\[
K v = \lambda M v,
\]
where $K_{ij} = \int_{\widehat K} \Grad\phi_j \cdot\Grad \phi_i$ is the stiffness matrix over the reference element.  We summarize computed values of $C_T(N)$, $C_{\rm SEM}(N)$ and $C_M(N)$ up to $N = 9$ in Tables~\ref{table:computedTrace}, \ref{table:computedTraceSEM} and \ref{table:markovConsts}.  For a SEM hex, we may determine the constant in the Markov inequality using the full mass matrix value of $C_M(N)$ and equivalence constants between discrete SEM and $L^2$ inner products.  

\begin{table}
\centering                                                                         
\begin{tabular}{|c|c|c|c|c|c|c|c|c|c|}                                               
\hline                                                                             
$N$ & 1 & 2 & 3 & 4 & 5 & 6 & 7 & 8 & 9 \\ 
\hline                                                                             
Hexahedron & 3.00 & 18.00 & 55.73 & 137.51 & 293.97 & 562.17 & 985.92 & 1616.24 & 2511.47 \\    
\hline                                                                             
Wedge &12.00 & 54.27 & 142.63 & 308.34 & 585.89 & 1021.64 & 1663.85 & 2574.06 & 3814.56 \\
\hline                                                                             
Pyramid &12.92 & 60.05 & 175.51 & 405.43 & 809.95 & 1460.93 & 2442.26 & 3849.94 & 5792.11 \\
\hline                                                                             
Tetrahedron &20.00 & 78.62 & 195.58 & 403.91 & 744.85 & 1265.54 & 2021.09 & 3073.26 & 4491.62 \\
\hline                                                                             
\end{tabular}                                                                      
\caption{Computed Markov constants $C_M(N)$ for various reference elements up to $N=9$. }
\label{table:markovConsts}
\end{table}

\end{document}